\newcommand{\DASH}{{\textbf{\Large $\mid$}}}
\definecolor{darkgreen}{rgb}{0.,0.5,0.}
\newcommand{\bO}{\textcolor{blue}{0}}
\newcommand{\rO}{\textcolor{red}{0}}
\newcommand{\gO}{\textcolor{darkgreen}{0}}
\newcommand{\bI}{\textcolor{blue}{1}}
\newcommand{\rI}{\textcolor{red}{1}}
\newcommand{\gI}{\textcolor{darkgreen}{1}}
\DeclareMathOperator{\modsymb}{mod}
\newcommand{\tcore}{{\text{$t$-core}}}
\numberwithin{equation}{section} \overfullrule 5pt
\newtheorem{thm}{Theorem}[section]
\newtheorem{cor}[thm]{Corollary}
\newtheorem{lem}[thm]{Lemma}
\theoremstyle{definition}
\newtheorem{defi}{Definition}[section]
\title[Difference operators under the Littlewood decomposition]{Difference operators for partitions under the Littlewood decomposition}
\author{Paul-Olivier Dehaye}   
\address[Paul-Olivier Dehaye]{I-Math, Universit\"at Z\"urich,
  Winterthurerstrasse 190, Z\"urich 8057,
Switzerland} 
\email{paul-olivier.dehaye@math.uzh.ch}
\author{Guo-Niu Han}
\address[Guo-Niu Han]{I.R.M.A., UMR 7501, Universit\'e de Strasbourg
et CNRS, 7 rue Ren\'e Descartes, F-67084 Strasbourg, France}
\email{guoniu.han@unistra.fr}
\author[Huan Xiong]{Huan Xiong$^*$}
\thanks{$^*$ Huan Xiong is the corresponding author.}
\address[Huan Xiong]{I.R.M.A., UMR 7501, Universit\'e de Strasbourg
et CNRS, 7 rue Ren\'e Descartes, F-67084 Strasbourg, France}
\email[Corresponding author]{xiong@math.unistra.fr}
\subjclass[2010]{05A15, 05A17, 05A19,  05E05, 05E10, 11P81}
\keywords{Littlewood decomposition, partition, Plancherel measure,
difference operator}
\begin{document}
\begin{abstract} 
The concept of $t$-difference operator
for functions of partitions is introduced to prove a generalization of
Stanley's theorem on polynomiality of Plancherel averages of symmetric functions related to contents
and hook lengths. 
Our extension uses a generalization of the
notion of Plancherel measure, based on walks in the Young lattice
with steps given by the addition of $t$-hooks. It is well-known that
the hook lengths of multiples of $t$ can be characterized by the Littlewood decomposition. Our study gives some further information on the contents and hook lengths of other 
congruence classes modulo $t$.
\end{abstract}
\maketitle

\section{Introduction} \label{sec:introduction}
\subsection{Goal}
The purpose of this paper is to compute explicit averages over partitions of some statistics of their invariants, such as contents and hook lengths, and establish polynomiality of these averages in the size of the partitions considered.
In contrast with previous results, we restrict our attention to subsets of partitions classified by the divisibility properties of those invariants, and we modify the averaging measures accordingly. 
Our main result is stated in Theorem~\ref{th:main'}.

\subsection{Basic definitions}
A \emph{partition} is a finite weakly decreasing sequence of positive integers $\lambda = $
$(\lambda_1$, $\lambda_2, \cdots$, $ \lambda_r)$. The \emph{size}
of the partition $\lambda$ is defined by the  integer $| \lambda |=\sum_{1\leq i\leq r}\lambda_i$. A partition $\lambda$ could be
represented by its \emph{Young diagram}, which is a collection of boxes
arranged in left-justified rows with $\lambda_i$ boxes in the $i$-th
row. The \emph{content} of a box $\square=(i,j)$ in a partition $\lambda$ is
defined by $c_{\square}=j-i$. 
With each $(i, j)$-box is associated its \emph{hook length},
denoted by $h_{(i, j)}$, which is the number of boxes exactly to the
right, or exactly below, including the box itself.  
The hook length multi-set of $\lambda$ is denoted by $\mathcal{H}(\lambda)$. 
In Figure~\ref{YoungFig}, 
the Young diagram and hook lengths of the partition (6, 3, 2, 2) are
represented. For example, $h_{(1,2)}=8$ and~$h_{(3,1)}=3$.

\begin{figure}[htbp]
\begin{center}
\Yvcentermath1

\begin{tabular}{c}
$\young(985321,541,32,21)$

\end{tabular}

\end{center}
\caption{The Young diagram of the partition $(6,3,2,2)$ and the hook
lengths of corresponding boxes. }
\label{YoungFig}
\end{figure}

Let $t$ be a positive integer. A partition is called a
\emph{$t$-core partition} if none of its hook lengths is divisible
by $t$. For example, from Figure \textbf{$1$} we can see  that $\lambda=(6,3,2,2)$ is a
$7$-core partition.
Let 
$$
{\mathcal{H}}_t(\lambda)=\{ h\in\mathcal{H}(\lambda): \ h\equiv 0 \pmod t\}
$$
be  the multi-set
of  hook lengths of multiples of $t$.

\subsection{01-sequences, Littlewood decomposition and Plancherel averages}
Each partition $\lambda$ can be associated with its \emph{01-sequence} $z(\lambda)=(z_{\lambda,i})_{i\in \mathbb{Z}}=(z_{i})_{i\in \mathbb{Z}}$ as follows. 
First, consider the edges on the  boundary  of $\lambda$,
starting at the bottom and ending to the right. 
Label the vertical
(resp.~horizontal) edges  0 (resp.~1).  
We
obtain a bi-infinite 01-sequence $(z_i)_{i\in \mathbb{Z}}$ 
 beginning with infinitely many 0's
 and ending with infinitely many 1's.  
 The index $0$ of the bi-infinite 01-sequence is defined by 
 the condition $\#\{z_i =0: i
\geq 0\}= \#\{z_i =1: i< 0\} $.  
Every partition and  its  01-sequence are uniquely determined
by each other (see Figure~\ref{fig.sequence} for a running example of the constructions presented in this section).

\begin{figure}[htbp]
\begin{center}
\includegraphics[width=0.78\textwidth]{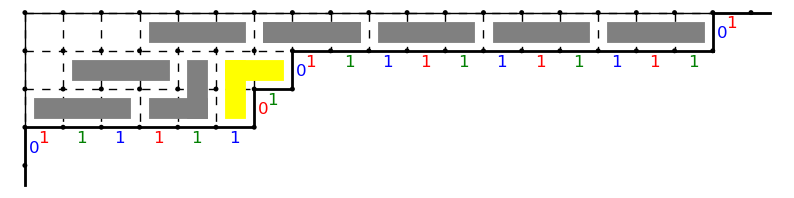} 
\caption{This is our running example for the 01-sequence and the Littlewood decomposition of a partition. The Young diagram of the partition $\lambda = (18,7,6)$ is drawn, together with its 01-sequence $\cdots\bO                 \rI\gI\bI \DASH \rI\gI\bI \rO\gI\bO  \rI\gI\bI \rI\gI\bI \rI\gI\bI \rI\gI\bO \rI\cdots$ (the $\DASH$ indicates the unique location where the number of 1's to the left equals the number of 0's to the right, which is between index -1 and 0 in the 01-sequence; visually this corresponds to the main diagonal in the Young diagram). Notice the mnemonic ``RGB'' for the colourings, starting after the $\DASH$ mark. The grey shapes indicate a sequence of 3-hooks that can be removed from $\lambda$, to finally obtain the 3-core $(3,1)$ of $\lambda$. One could start with the yellow shape for instance, and then make various choices for the order of the removals. The process always ends with the same $3$-core $(3,1)$, irrespective of the order of removals.
       Quotients will be $\cdots\rO\rI\DASH\rI\rO\rI\rI\rI\rI\rI\cdots$ (partition $(2)$), $\cdots\gO\DASH\gI\gI\gI\gI\gI\gI\gI\cdots$ (empty partition $\emptyset$) and $\cdots\bO\bI\bI\DASH\bO\bI\bI\bI\bO\bI\cdots$ (partition $(5,2)$).     
        \label{fig.sequence}}
    \label{fig.01}
  \end{center}
\end{figure}

The 01-sequences contain a lot of information about  partitions (see \cite{arm, James}).  By the construction of  01-sequences the following lemma is obvious.
\begin{lem}\label{th:01sequencehookcontent}
For a partition $\lambda$ and its 01-sequence
$z(\lambda)=(z_i)_{i\in \mathbb{Z}}$, we have
$$
|\lambda|=\#\{(i,j):i<j,\ z_i=1,\ z_j=0\}.
$$
Actually, each box in $\lambda$ is uniquely determined by such a
pair $(i,j).$\footnote{\label{foot:box} Notice that here $(i,j)$ is not the coordinate of the box in $\lambda$.} The hook length of such box is $j-i$.
Also, for an edge labeled by $z_i=0,$  the box to the left has content
$i$; for an edge labeled by $z_i=1,$  the box above has content
$i+1$.
\end{lem}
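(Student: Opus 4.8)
The plan is to set up the correspondence between boxes of $\lambda$ and the descent pairs $(i,j)$ with $i<j$, $z_i=1$, $z_j=0$ directly from the boundary-path construction of the 01-sequence. First I would recall that reading the boundary of $\lambda$ from bottom-left to top-right, a vertical step (labelled $0$) corresponds to moving down the right edge of some row and a horizontal step (labelled $1$) corresponds to moving across the bottom edge of some column; each box of $\lambda$ sits in a unique row and a unique column, so it is cut out by exactly one horizontal step on its column (to the left, hence at a smaller index, giving $z_i=1$) and one vertical step on its row (to the right, hence at a larger index, giving $z_j=0$). This sets up the claimed bijection: every box determines a pair with $z_i=1$, $z_j=0$ and $i<j$, and conversely every such pair bounds a rectangle-corner box. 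The size formula $|\lambda|=\#\{(i,j):i<j,\ z_i=1,\ z_j=0\}$ is then immediate by counting.

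Next I would pin down the hook length. The hook of the box associated to $(i,j)$ consists of the boxes to its right in the same row plus the boxes below in the same column plus the box itself. Along the boundary path, travelling from the horizontal step at index $i$ to the vertical step at index $j$, each intermediate step is either horizontal (accounting for an arm box, i.e.\ a box in the same row to the right) or vertical (accounting for a leg box, i.e.\ a box in the same column below). Since the two labelled edges are at indices $i$ and $j$, the number of edges strictly between them together with the identification of the box itself yields exactly $j-i$ steps; a careful accounting shows the arm length plus leg length plus one equals $j-i$, so the hook length is $j-i$.

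Finally, for the content statements I would use the placement of the index $0$ relative to the main diagonal. The normalization $\#\{z_i=0:i\ge 0\}=\#\{z_i=1:i<0\}$ forces the $\DASH$ cut to fall on the main diagonal. For an edge $z_i=0$ (a vertical step), the box immediately to its left lies in row $r$ and column $c$ with $c-r=i$ by tracking how many horizontal versus vertical steps precede index $i$ relative to the diagonal; hence its content is $i$. Symmetrically, for an edge $z_i=1$ (a horizontal step), the box immediately above has content $i+1$, the shift by one coming from the fact that a box \emph{above} a horizontal edge sits one diagonal higher. These two content identities can be verified simultaneously by an induction on $i$ (or by a direct displacement count from the diagonal), using that consecutive edges change the running content by $\pm 1$ in a controlled way.

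The main obstacle I expect is the content bookkeeping: establishing that the index $0$ really lands on the main diagonal and then propagating the content labels correctly in both directions requires matching the combinatorial normalization of the 01-sequence to the geometric $j-i$ coordinate of boxes, and the off-by-one difference between the $z_i=0$ case (content $i$) and the $z_i=1$ case (content $i+1$) must be justified carefully rather than by the suggestive picture alone. The bijection and the hook-length count, by contrast, should follow cleanly from the boundary-path description once the conventions are fixed.
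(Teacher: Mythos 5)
The paper never proves this lemma: it is stated as a known property of 01-sequences, with pointers to the references \cite{James} and \cite{arm}, so there is no internal proof to compare yours against. Your boundary-path argument is the standard one found in those sources, and its outline is correct: each box pairs the unique horizontal boundary edge of its column with the unique vertical boundary edge of its row; the edges strictly between indices $i$ and $j$ account exactly for the arm and leg of the box, giving hook length $(j-i-1)+1=j-i$; and the content statements follow once the indexing is identified with the diagonal coordinate.

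Three places in your sketch carry the real content and should be made explicit. First, the crux of the bijection is the equivalence ``$i<j$ if and only if the box lies in $\lambda$,'' which you assert (``every such pair bounds a rectangle-corner box'') but do not check. Writing $i=c-\lambda'_c-1$ for the horizontal edge of column $c$ and $j=\lambda_r-r$ for the vertical edge of row $r$ (where $\lambda'_c$ is the length of column $c$), one has $j-i=(\lambda_r-c)+(\lambda'_c-r)+1$, which is positive when $(r,c)\in\lambda$ and negative when $(r,c)\notin\lambda$ (since then both $\lambda_r<c$ and $\lambda'_c<r$); this single computation settles both directions of the bijection and the hook-length formula at once. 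Second, your phrase ``consecutive edges change the running content by $\pm1$'' is imprecise: with the correct assignment (a vertical edge gets the content of the box to its left, a horizontal edge gets that content minus one, i.e.\ the content of the box above it minus one), each successive edge of the path advances this coordinate by exactly $+1$, as a check of the four cases of consecutive edge types shows; this constant $+1$ advance is what makes the induction trivial. Third, pinning index $0$ to the diagonal is exactly the identity $\#\{r\ge 1:\lambda_r\ge r\}=\#\{c\ge 1:\lambda'_c\ge c\}$ (both sides being the Durfee square size), which shows the diagonal indexing satisfies the paper's normalization $\#\{z_i=0:i\ge 0\}=\#\{z_i=1:i<0\}$ and hence coincides with it. None of these is a fatal gap, but as written they are the parts of the lemma your proposal defers rather than proves.
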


By Lemma \ref{th:01sequencehookcontent} a partition
$\lambda$ is a $t$-core partition if and only if there are no $i\in
\mathbb{Z}$ satisfying $z_{i} = 1$, $z_{i+t} = 0$ in the 01-sequence
$z(\lambda)$. If $z_{i} = 1$, $z_{i+t} = 0$ (resp. $z_{i} =
0$, $z_{i+t} = 1$) in a 01-sequence $z(\lambda)=(z_{j})_{j\in
\mathbb{Z}}$, 
by exchanging the values of $z_i$ and $z_{i+t}$ we obtain a new
01-sequence and thus a new partition. We say that this new partition
is obtained by removing (resp.~adding) a {\it $t$-hook}  from (resp.~to)
$\lambda$.

\medskip

Starting with a partition
$\lambda$, we can always remove several $t$-hooks  until no more
$t$-hooks could be removed. 
The final
partition we get by this operation is called the
{\it $t$-{core}} of $\lambda$ and denoted by $\lambda_{\tcore}$,
which is actually independent of the
ordering of removing $t$-hooks from $\lambda$ (see \cite{arm, James}).
For $0\le i \leq t-1$, the $i$-th {\it $t$-quotient}  of the partition
$\lambda$ is defined to be the partition associated with the
01-sequence obtained by taking the subsequence  $(z_{t j
+i})_{j\in \mathbb{Z}} $ of $z(\lambda)$, which is denoted by
$\lambda^i.$ 
For each positive integer $t$, we actually build a map
$$
\{\text{partitions} \} \rightarrow \{ t\text{-core partitions} \} \times \{
\text{partitions} \}^t 
$$ 
which sends a partition $\lambda$ to
$(\lambda_\tcore; \lambda^0,\lambda^1,\ldots,\lambda^{t-1})$.
This
map is a bijection,
 called the \emph{Littlewood decomposition of partitions at $t$}
\cite{Macdonald}. 
In this paper, we always set that a partition $\lambda$ is identical with its
image under the Littlewood decomposition, i.e., we always write
$\lambda=(\lambda_\tcore;
\lambda^0,\lambda^1,\ldots,\lambda^{t-1})$.
The Littlewood decomposition has the following two fundamental properties (see \cite{Macdonald}): 
$$|\lambda|=|\lambda_{\tcore}|+t(|\lambda^0|+|\lambda^1|+\cdots+|\lambda^{t-1}|)$$
and
$$\{h/t\mid h\in \mathcal{H}_t(\lambda)\}=
\mathcal{H}(\lambda^0)\cup \mathcal{H}(\lambda^1)\cup \cdots \cup \mathcal{H}(\lambda^{t-1}).$$

\medskip

Let $f_\lambda$ (resp. $f_{\lambda/\mu}$) be the number of standard
Young tableaux of shape $\lambda$ (resp. $\lambda/\mu$) and
$H_{\lambda}=\prod_{\square\in\lambda} h_{\square}$ be the product
of all hook lengths of boxes in $\lambda$. For convenience we set
$f_\emptyset=1$ and $H_\emptyset=1$ for the empty partition
$\emptyset$. It is well known that (see \cite{frt, han2, rsk, ec2})
\begin{equation}
\label{eq:hookformula} f_\lambda = \frac{|\lambda| !}{H_{\lambda}}
\qquad \text{and}\qquad \frac{1}{n!}\sum_{|\lambda| =n}
f_\lambda^2=1.
\end{equation}
The latter identity defines a measure on partitions of size $n$, called
the Plancherel measure. Alternatively, we can see $f_\lambda$ as counting ``walks" among Young diagrams up to the shape $\lambda$ (so-called \emph{hook walks}, see \cite{walk1, walk2}).  

\subsection{Polynomiality of Plancherel averages and modified hook walks}
Expanding on a formula of Nekrasov and Okounkov on hook lengths \cite{no}, the
second author conjectured \cite{han} that
$$
P(n)=\frac{1}{n!}\sum_{|\lambda|= n}
f_\lambda^2\sum_{\square\in\lambda}h_{\square}^{2k} 
$$ is always a
polynomial of $n$ for any $k\in \mathbb{N}$. This conjecture  was generalized and
proved by  Stanley \cite{stan} (see also \cite{tew, clps, han3, panova}), and later generalized in~\cite{hanxiong, hanxiong1, hanxiong4}.
Let $Q$ be a symmetric function in infinitely many variables and $E$ be a finite set with $n$ elements.
The symbol $Q(x: x\in E)$
means that $n$ of
the variables are substituted by $x$ 
for $x\in E$, and all other variables by~$0$.

\begin{thm}[Stanley \cite{stan}]\label{th:stanley}
Let $Q$ be a symmetric function in infinitely many variables. Then 
\begin{equation}\label{eq:stanleycontent}
P(n)=\frac{1}{n!}\sum_{|\lambda|= n} f_\lambda^2\, Q(c_{\square}:
{\square}\in\lambda)
\end{equation}
and
\begin{equation}\label{eq:stanleyhook}
P^*(n)=\frac{1}{n!}\sum_{|\lambda|= n} f_\lambda^2\, Q(h_{\square}^2:
{\square}\in\lambda)
\end{equation}
are polynomials in $n$. 
\end{thm}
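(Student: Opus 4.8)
The plan is to reduce the statement to a single linear-algebraic fact about a well-chosen basis of functions on partitions, and then to evaluate the Plancherel average of each basis element explicitly as a polynomial in $n$. First I would linearize. Since $Q$ is a symmetric function, $Q(c_\square:\square\in\lambda)$ is a polynomial in the content power sums $p_r(\lambda)=\sum_{\square\in\lambda}c_\square^r$, and likewise $Q(h_\square^2:\square\in\lambda)$ is a polynomial in the squared-hook power sums $\tilde p_r(\lambda)=\sum_{\square\in\lambda}h_\square^{2r}$. By linearity of the average it suffices to prove that the functional $\mathbb E_n[g]:=\tfrac{1}{n!}\sum_{|\lambda|=n}f_\lambda^2\,g(\lambda)$ is polynomial in $n$ for every $g$ in the (commutative) algebra generated by the $p_r$, and separately for every $g$ in the algebra generated by the $\tilde p_r$.

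The key structural input is that both algebras sit inside the algebra $\Lambda^*$ of \emph{shifted symmetric functions} of $\lambda$. A convenient basis of $\Lambda^*$ is furnished by the functions
\[
s^*_\mu(\lambda)=(n)_{\downarrow|\mu|}\,\frac{f_{\lambda/\mu}}{f_\lambda},\qquad n=|\lambda|,\quad (n)_{\downarrow k}:=n(n-1)\cdots(n-k+1),
\]
indexed by partitions $\mu$. I would then verify membership. That $p_r\in\Lambda^*$ (with filtration degree $r+1$) is classical. For the hook side I would invoke Lemma~\ref{th:01sequencehookcontent} to express both contents and hook lengths through the $01$-sequence: writing the first-column hook lengths (the beta-numbers) as $b_1>b_2>\cdots$, each hook length has the form $b_i-b_j$, so $\tilde p_r(\lambda)$ becomes a symmetric function of the $b_i$, and I would check that after accounting for the infinite tail it descends to a shifted symmetric function of $\lambda$. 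Since a fixed $Q$ has bounded degree, the resulting $g$ lies in $\Lambda^*$ and expands into \emph{finitely many} $s^*_\mu$ with coefficients independent of $n$.

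It remains to evaluate the average on the basis. Here I would use the two branching identities $f_\lambda=\sum_\mu f_\mu$ (sum over $\mu$ obtained by deleting a corner) and $\sum_\lambda f_\lambda=(n+1)f_\mu$ (sum over $\lambda$ obtained by adding a box to $\mu$, with $|\mu|=n$), together with the normalization $\sum_{|\lambda|=n}f_\lambda^2=n!$ from~\eqref{eq:hookformula}, to obtain the clean evaluation
\[
\sum_{|\lambda|=n}f_\lambda\,f_{\lambda/\mu}=\frac{n!}{|\mu|!}\,f_\mu,
\]
whence $\mathbb E_n[s^*_\mu]=\binom{n}{|\mu|}f_\mu$, manifestly a polynomial in $n$. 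Expanding $g$ in the $s^*_\mu$ basis and summing then proves the theorem in both cases. The same mechanism can be repackaged as a \emph{difference operator}: the map $g\mapsto \mathbb E_{n+1}[g]-\mathbb E_n[g]$ strictly lowers the filtration degree on $\Lambda^*$, so an induction on degree (base case $g\equiv1$, $\mathbb E_n[1]=1$) yields polynomiality directly. This is the formulation I would expect the paper to generalize to the addition of $t$-hooks.

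The step I expect to be the main obstacle is the hook half of the membership claim. Contents transform additively under adding a box ($p_r(\lambda+\square)=p_r(\lambda)+c_\square^r$), which makes that case essentially transparent; squared hook lengths do \emph{not} behave so simply, so establishing $\tilde p_r\in\Lambda^*$ — the content--hook duality mediated by the $01$-sequence and the Littlewood decomposition — is the genuinely nontrivial ingredient, and it is precisely the phenomenon the later sections are designed to control. By contrast, the evaluation $\sum_{|\lambda|=n}f_\lambda f_{\lambda/\mu}=\tfrac{n!}{|\mu|!}f_\mu$ is elementary, following by counting pairs of saturated chains in the Young lattice (equivalently, via \textsf{RSK}).
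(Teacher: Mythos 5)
Your proposal is correct in substance but follows a genuinely different route from the paper. The paper never proves Theorem \ref{th:stanley} from scratch: it quotes it from Stanley \cite{stan} and then recovers it as the special case $t=1$, $\mu=\emptyset$ of Theorem \ref{th:main'} (for $t=1$ one has $F_{\lambda/\emptyset}=f_\lambda$ and $G_\lambda=1/H_\lambda=f_\lambda/n!$ by \eqref{eq:hookformula}, so the weight $F_{\lambda/\mu}G_\lambda$ becomes the Plancherel weight $f_\lambda^2/n!$). Theorem \ref{th:main'} itself is established by the elementary difference-operator calculus: $D_t(G_\lambda)=0$ (Lemma \ref{th:Glambda}), the telescoping identity $P_g(n+1)-P_g(n)=P_{D_tg}(n)$ (Lemma \ref{th:telescope}), the binomial formula $P_g(n)=\sum_k\binom{n}{k}D_t^kg(\mu)$ (Theorem \ref{th:main3}), degree-lowering of $D$ on the corner functions $q_\nu(\lambda)/H_\lambda$ (Theorem \ref{th:dqnu1}), and the 01-sequence analysis showing the relevant hook and content sums are admissible. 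Your route is instead the shifted-symmetric-function proof of Olshanski \cite{ols2}, which the paper explicitly cites for the content case: expand in the shifted Schur basis $s^*_\mu$ of $\Lambda^*$ and use the evaluation $\mathbb{E}_n[s^*_\mu]=\binom{n}{|\mu|}f_\mu$, which follows correctly from $\sum_{|\lambda|=n}f_\lambda f_{\lambda/\mu}=\frac{n!}{|\mu|!}f_\mu$. What your approach buys is a clean closed-form evaluation on a basis (with degree control); what it costs is reliance on the Okounkov--Olshanski basis theory, and it does not obviously extend to the weighted sums $\sum F_{\lambda/\mu}G_\lambda(\cdots)$ over partitions with fixed $t$-core that are the actual subject of the paper --- which is precisely why the authors build the operator machinery instead. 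Your closing observation, that the mechanism can be repackaged as a degree-lowering difference operator, is exactly the skeleton the paper abstracts and generalizes.

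One assertion in your hook step is wrong as stated, though repairable by a standard fact. Hook lengths are \emph{not} of the form $b_i-b_j$ for beta-numbers $b_1>b_2>\cdots$: for $\lambda=(2,1)$ the beta-numbers are $3,1$, whose only difference is $2$, while $\mathcal{H}(\lambda)=\{3,1,1\}$. The correct statement is complementary: the hook lengths in row $i$ are $\{1,2,\ldots,b_i\}\setminus\{b_i-b_j:j>i\}$, whence
\[
\sum_{\square\in\lambda}h_\square^{2r}\;=\;\sum_i\sum_{m=1}^{b_i}m^{2r}\;-\;\sum_{i<j}(b_i-b_j)^{2r},
\]
and it is this expression (polynomial in the $b_i$ by Faulhaber's formula, then regularized over the infinite tail $b_i=\lambda_i-i+\mathrm{const}$) that descends to a shifted symmetric function. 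Equivalently, in the paper's language (Lemma \ref{th:01sequencehookcontent}), a hook length is a difference $j-i$ with $z_i=1$ and $z_j=0$, i.e., between a $0$-position and a $1$-position of the 01-sequence, never between two $0$-positions; differences of two beta-numbers are exactly the non-hooks. With this correction your membership claim $\tilde p_r\in\Lambda^*$ holds and the rest of your argument goes through.
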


Olshanski \cite{ols2} also proved the content case of Theorem
\ref{th:stanley}.

\medskip

Let $\lambda$ and $\mu$ be two partitions. We say that
$\lambda\geq_t \mu$ if $\lambda$ could be obtained by adding some
$t$-hooks to $\mu$. Then, the set of all partitions becomes a
partially ordered set under this relation.
If $\lambda\geq_t \mu$, define $F_{\lambda/\mu}$ to be the number of ways of removing $t$-hooks:
\begin{equation}
F_{\mu/\mu}:=1
\qquad \text{and}\qquad F_{\lambda/\mu}:=\sum\limits_{\substack{\lambda\geq_t\lambda^-\geq_t\mu\\
|\lambda/\lambda^-|=t }}F_{\lambda^-/\mu}\qquad \text{(for $\lambda \neq \mu$)}.
\end{equation}
It is easy to see that
$F_{\lambda/\mu}=\#\{(P_0,P_1,\ldots,P_{t-1}):$ $P_i$  \emph{is a 
Young tableau of shape}  $\lambda^i/\mu^i$, \emph{the union of
entries in} $P_0,P_1,\ldots,P_{t-1}$ \emph{are} $1,2,\ldots,
\sum_{i=0}^{t-1}|\lambda^i/\mu^i|\}$.
Hence,
$$F_{\lambda/\mu}=\binom{\sum_{i=0}^{t-1}|\lambda^i/\mu^i|}
{|\lambda^0/\mu^0|,|\lambda^1/\mu^1|,\ldots,|\lambda^{t-1}/\mu^{t-1}|}\prod_{i=0}^{t-1}
 f_{\lambda^i/\mu^i}.$$
These $F_{\lambda/\mu}$ can be seen as counting hook walks distributed among the quotient partitions of $\lambda$.  
Let \begin{equation}
F_\lambda:=
F_{\lambda/ \lambda_{\tcore}}=
\binom{n}
{|\lambda^0|,|\lambda^1|,\cdots,|\lambda^{t-1}|}\prod_{i=0}^{t-1}
f_{\lambda^i} = {n! t^n}{G_{\lambda}},
 \end{equation}
 where $n=|\lambda^0|+|\lambda^1|+\ldots+|\lambda^{t-1}|$
 and 
 $$G_{\lambda}:=\frac {1}{\prod_{h \in \mathcal{H}_t(\lambda)} h}.$$

When $t=1$,  we have $F_\lambda=f_\lambda$ and $G_\lambda = 1/H_\lambda$.
Also, when $\lambda$ is a $t$-core partition, we have $F_\lambda=G_\lambda=1$.

\subsection{Difference operators and main result}

Let $g$ be a function of partitions and $\lambda$ a partition.
The difference operator~$D$ for partitions was defined in
\cite{hanxiong} as
\begin{equation*} 
    Dg(\lambda)=\sum_{|\lambda^{+}/\lambda|=1}g(\lambda^{+})-g(\lambda).
\end{equation*}
In this paper, we introduce a generalization of $D$. For every integer
 $t$, let
\begin{equation}\label{eq:diffDt}
    D_tg(\lambda)=\sum\limits_{\substack{\lambda^{+}\geq_t\lambda\\ |\lambda^{+}/\lambda|=t}}g(\lambda^{+})-g(\lambda).
\end{equation}
Accordingly, $D_1=D$.
 
The higher-order difference operators for $D_t$ are defined by induction:
$$D_t^0g:=g \text{\quad and\quad } D_t^k g:=D_t(D_t^{k-1} g)\quad (k\geq 1).$$ 
These operators also fit in Stanley's theory of differential posets \cite{stan1}, although this is a language we will not use here.
The following theorem is our main result, which will be proved in
Section \ref{sec:ProofMain}.

\begin{thm}\label{th:main'}
Suppose that $t$ is a positive integer, $u',v', j_u,{j'}_v, k_u,{k'}_v$ are nonnegative integers and $\mu$ is a given $t$-core
partition. Then, there exists some fixed $r\in \mathbb{N}$ such that
$$
D_t^r\Biggl(G_\lambda \Biggl(\prod_{u=1}^{u'}\sum\limits_{\substack{\square\in \lambda\\
h_{\square}\equiv \pm j_u (\modsymb t
)}}h_{\square}^{2k_u}\Biggr)\Biggl(\prod_{v=1}^{v'}\sum\limits_{\substack{\square\in \lambda\\
c_{\square}\equiv j'_v (\modsymb t )}}c_{\square}^{k'_v}\Biggr) \Biggr)=0
$$
for every partition $\lambda$ with $\lambda_\tcore=\mu.$
Moreover,
\begin{equation*}
P(n)=\sum\limits_{\substack{\lambda_\tcore=\mu\\
|\lambda/\mu|=nt }} F_{\lambda/\mu} G_\lambda\,
\Biggl(\prod_{u=1}^{u'}\sum\limits_{\substack{\square\in \lambda\\
h_{\square}\equiv \pm j_u  (\modsymb t
)}}h_{\square}^{2k_u}\Biggr)\Biggl(\prod_{v=1}^{v'}\sum\limits_{\substack{\square\in \lambda\\
c_{\square}\equiv j'_v (\modsymb t )}}c_{\square}^{k'_v}\Biggr)
\end{equation*}
is a polynomial of  $n$ ($n$ is a nonnegative integer).

\end{thm}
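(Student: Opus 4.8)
The two assertions are linked by a finite-difference identity, and I would obtain the polynomiality as a formal consequence of the vanishing. Write $g(\lambda)=G_\lambda\,Q(\lambda)$ for the bracketed function and set $P_g(n)=\sum_{\lambda_\tcore=\mu,\,|\lambda/\mu|=nt}F_{\lambda/\mu}\,g(\lambda)$, so that $P(n)=P_g(n)$. Plugging the definition of $D_t$ into $P_{D_tg}(n)$, interchanging the two finite summations, and recognizing the inner sum $\sum_{\lambda^+\geq_t\lambda\geq_t\mu,\,|\lambda^+/\lambda|=t}F_{\lambda/\mu}$ as $F_{\lambda^+/\mu}$ by the recursive definition of $F_{\lambda/\mu}$, I get $P_{D_tg}(n)=P_g(n+1)-P_g(n)$; that is, $P$ intertwines $D_t$ with the forward difference $\Delta$. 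Consequently $P_{D_t^r g}=\Delta^r P_g$, and once the first assertion is proved, namely $D_t^r g(\lambda)=0$ for every $\lambda$ with $\lambda_\tcore=\mu$, we obtain $\Delta^r P=0$, whence $P$ is a polynomial in $n$ of degree less than $r$. The whole problem is thus reduced to the local nilpotency statement $D_t^r g\equiv 0$.

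For that vanishing I would pass to the Littlewood decomposition and regard $g$ as a function of the quotients $(\lambda^0,\dots,\lambda^{t-1})$, with $\mu$ held fixed. The two fundamental properties give $G_\lambda=t^{-N}\prod_{i}1/H_{\lambda^i}$ with $N=\sum_i|\lambda^i|$, and adding a single $t$-hook amounts to adding exactly one box to exactly one quotient. Expanding $D_t(G_\lambda Q)$ over these additions, factoring out $G_\lambda$, and using the Plancherel transition identity $\sum_{\nu^+}H_\nu/H_{\nu^+}=1$ to cancel the zeroth-order contribution of $Q$ against the $-g(\lambda)$ term, I am left with $D_t(G_\lambda Q)=\tfrac{G_\lambda}{t}\sum_{i=0}^{t-1}\sum_{(\lambda^i)^+}\frac{H_{\lambda^i}}{H_{(\lambda^i)^+}}\bigl(Q(\lambda^{+})-Q(\lambda)\bigr)$, where $\lambda^+$ differs from $\lambda$ only in its $i$-th quotient. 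Thus $D_t$ sends $G_\lambda$ times a function of the quotients to $G_\lambda$ times a ``discrete derivative'' of that function, with the awkward $G_\lambda Q$ term fully absorbed.

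The crux, and the genuinely new input, is to show that $Q(\lambda)$ is, as a function of the quotients, a polynomial in the content power sums $p_r(\lambda^i)=\sum_{\square\in\lambda^i}c_\square^{\,r}$, with coefficients depending only on the offsets of the $t$-core $\mu$. For contents this is the translation, via Lemma~\ref{th:01sequencehookcontent}, of the residue class $c_\square\equiv j'\ (\modsymb t)$ into positions on the runners of the abacus, and hence into the content data of the individual quotients. For hook lengths $h_\square\equiv\pm j\ (\modsymb t)$ the relevant pairs $(p,q)$ with $z_p=1$, $z_q=0$ and $q-p\equiv\pm j$ connect two runners whose indices differ by $j$ modulo $t$, so $\sum_{h\equiv\pm j}h^{2k}$ becomes a double sum over a $1$-position on one runner and a $0$-position on another; each such position I would again rewrite through the contents of the two corresponding quotients. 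I expect the production of clean closed formulas for these congruence-class hook and content sums in terms of the $p_r(\lambda^i)$ to be the main obstacle, the hook case being harder because of the cross-runner bookkeeping and the $\pm$ ambiguity.

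Granting this, the conclusion is a filtration argument. Assign to $p_r(\lambda^i)$ the degree $r+1$; then $Q$ has a finite degree $d$. The increment $Q(\lambda^+)-Q(\lambda)$ replaces $p_r(\lambda^i)$ by $p_r(\lambda^i)+c_{\square^+}^{\,r}$ in a single slot, and after applying $\sum_{(\lambda^i)^+}\frac{H_{\lambda^i}}{H_{(\lambda^i)^+}}(-)$ the elementary identities $\sum_{\nu^+}\frac{H_\nu}{H_{\nu^+}}c_{\square^+}^{\,s}=1,0,|\nu|,2p_1(\nu),\dots$ for $s=0,1,2,3,\dots$ (each a content polynomial of degree strictly below the generator that produced it) show that the outcome is again $G_\lambda$ times a polynomial in the $p_r(\lambda^i)$ of degree strictly smaller than $d$. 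Hence $D_t$ strictly lowers this degree and is locally nilpotent on $G_\lambda\cdot(\text{content polynomials of the quotients})$, so $D_t^r g=0$ once $r>d$. Feeding this back through the first paragraph yields the polynomiality of $P(n)$.
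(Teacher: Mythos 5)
Your reductions are sound and coincide with the paper's own: the telescoping identity $P_g(n+1)-P_g(n)=P_{D_tg}(n)$, obtained by swapping sums and using the recursive definition of $F_{\lambda/\mu}$, is Lemma \ref{th:telescope} (polynomiality then follows as in Theorem \ref{th:main3}); the factorization $D_t(G_\lambda Q)=\tfrac{G_\lambda}{t}\sum_{i}\sum_{(\lambda^i)^+}\tfrac{H_{\lambda^i}}{H_{(\lambda^i)^+}}\bigl(Q(\lambda^+)-Q(\lambda)\bigr)$ is exactly the mechanism behind Lemma \ref{th:Glambda} and Theorem \ref{th:dqnu}; and the identities you call elementary, $\sum_{\nu^+}\tfrac{H_\nu}{H_{\nu^+}}c_{\square^+}^{s}=$ (lower-degree content polynomial), are Theorem \ref{th:dqnu1}(4), quoted by the paper from prior work. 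The gap is your third paragraph, which you yourself flag as ``the main obstacle'': you never prove that $\sum_{h_\square\equiv\pm j (\modsymb t)}h_\square^{2k}$ and $\sum_{c_\square\equiv j' (\modsymb t)}c_\square^{k'}$ are, as functions of the quotients $(\lambda^0,\dots,\lambda^{t-1})$ with $\lambda_\tcore=\mu$ fixed, polynomials in the content power sums $p_r(\lambda^i)$ with coefficients depending only on $\mu$. That claim is the core technical content of the theorem, and it is stronger than what your own filtration argument needs. Moreover it is not a mere computation: even after the cross-runner bookkeeping gives you the \emph{increments} of these sums as polynomials in $c_{\square_i}$ with coefficients in the $p$-algebra, deducing that the sums \emph{themselves} lie in that algebra is an integration problem (an increment system must satisfy nontrivial consistency relations, and exhibiting an antiderivative inside the algebra requires an argument); your sketch supplies neither the increment formulas nor the integration step.

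The paper's proof shows how to avoid this entirely: it works with increments alone. Lemmas \ref{th:contentadd} and \ref{th:hookdiff} compute, via the 01-sequence (adding a $t$-hook changes exactly two entries), the finite multiset difference of contents and of congruence-restricted hook lengths between $\lambda^+$ and $\lambda$; the result is an explicit polynomial in $c_{\square_i}$ whose coefficients are $q$-polynomials of the quotients $\lambda^{i'}$, $\lambda^{i''}$ (corner contents shifted by $b_i-b_{i'}$). This is packaged as $\mu$-admissibility, and nilpotency of $D_t$ is then proved by the induction of Theorem \ref{th:main} on the number of admissible factors together with the $q$-degree, using the expansion $A\cdot\Delta B+B\cdot\Delta A+\Delta A\cdot\Delta B$ --- no closed formula for the sums is ever needed. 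To complete your proposal you would have to carry out essentially the same corner/runner computation as Lemma \ref{th:hookdiff} and then, in addition, solve the integration problem; the more economical repair is to drop the closed-formula formulation and run your (otherwise correct) filtration argument on increments, which is precisely the paper's route.
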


Let $Q_1$ and $Q_2$ be two symmetric functions.  By Theorem \ref{th:main'}, 
there exists some fixed $r\in \mathbb{N}$ such that
$$
D_t^r\bigl(G_\lambda Q_1(h_{\square}^2:
{\square}\in\lambda)Q_2(c_{\square}: {\square}\in\lambda) \bigr)=0
$$
for every partition $\lambda$ with $\lambda_\tcore=\mu.$
Moreover,
\begin{equation} \label{eq:main'}
P(n)=\sum\limits_{\substack{\lambda_\tcore=\mu\\
|\lambda/\mu|=nt }} F_{\lambda/\mu} G_\lambda\, Q_1(h_{\square}^2:
{\square}\in\lambda)Q_2(c_{\square}: {\square}\in\lambda)
\end{equation}
is a polynomial of  $n$.
When $t=1,\mu=\emptyset$, we have \eqref{eq:main'} implies
Theorem \ref{th:stanley}.

\subsection{Specialization for the square case}
We now focus on expressions of square sum of contents or hook lengths, 
and thereby obtain
explicit results. 
Details of 
the proofs are given in Section \ref{sec:hanconjecture}. 
We start by considering individual partitions. 
\begin{thm}\label{th:hookcontentsquare:mu0}
Let $0\leq k\leq t-1.$ For every partition $\lambda$ with $\lambda_\tcore=\emptyset$ we
 have
 \begin{align}\label{eq:h2c2}\ & \sum\limits_{\substack{\square\in\lambda\\
h_\square\equiv
k(\modsymb t )}}h_\square^2+\sum\limits_{\substack{\square\in\lambda\\
h_\square\equiv t-k(\modsymb t )}}h_\square^2 -
\Biggl(\sum\limits_{\substack{\square\in\lambda\\ c_\square\equiv
k(\modsymb t )}}c_\square^2+\sum\limits_{\substack{\square\in\lambda\\
c_\square\equiv t-k(\modsymb t )}}c_\square^2\Biggr)\\&= 2t^2
		\Bigl(	\sum_{i=0}^{t-1-k} n_i n_{i+k}
		+\sum_{i=0}^{k-1} n_i n_{i+t-k} \Bigr),\nonumber
\end{align} 
where $n_i=|\lambda^i|$.
\end{thm}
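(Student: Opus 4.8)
The plan is to rewrite all four sums as statistics of the $01$-sequence $z(\lambda)=(z_i)_{i\in\mathbb{Z}}$ and then organize the computation along the residue classes modulo $t$ coming from the Littlewood decomposition. By Lemma~\ref{th:01sequencehookcontent} the hook lengths are exactly the gaps $j-i$ over inversions $z_i=1,\ z_j=0,\ i<j$, so
\[
\sum_{\substack{\square\in\lambda\\ h_\square\equiv k\,(\mathrm{mod}\ t)}}h_\square^{2}
+\sum_{\substack{\square\in\lambda\\ h_\square\equiv t-k\,(\mathrm{mod}\ t)}}h_\square^{2}
=\sum_{\substack{i<j,\ z_i=1,\ z_j=0\\ j-i\equiv\pm k\,(\mathrm{mod}\ t)}}(j-i)^{2}.
\]
For the contents I would record the companion formula, which also follows directly from Lemma~\ref{th:01sequencehookcontent}: the number $N_c$ of boxes of $\lambda$ of content $c$ equals $\#\{m\ge c:\ z_m=0\}$ for $c\ge 0$ and $\#\{m<c:\ z_m=1\}$ for $c<0$. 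Hence $\sum_{c_\square\equiv r}c_\square^{2}=\sum_{c\equiv r}N_c\,c^{2}$, and the whole left-hand side of \eqref{eq:h2c2} is turned into two explicit sums over $z$.

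Next I would pass to the quotients. Writing $i=tp+a$, $j=tq+b$ with $a,b\in\{0,1,\dots,t-1\}$, the defining property $z_{tp+a}=w^{a}_{p}$ of the Littlewood decomposition identifies the residue-$a$ subsequence with the $01$-sequence $w^{a}$ of $\lambda^{a}$; since $\lambda_\tcore=\emptyset$ every $w^{a}$ is balanced, and this charge-zero property is what I expect to drive all the cancellations below. In these coordinates a hook length reads $j-i=t(q-p)+(b-a)$, and the congruence $j-i\equiv\pm k$ becomes the purely residue-level condition $b-a\equiv\pm k\pmod{t}$. Expanding $(t(q-p)+(b-a))^{2}$, and performing the analogous residue decomposition of $\sum_{c\equiv r}N_c\,c^{2}$, I would sort every contribution according to the pair $(a,b)$.

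The heart of the matter, and the step I expect to be the main obstacle, is the cancellation that occurs when the content sums are subtracted from the hook sums. For a fixed pair $(a,b)$ neither the hook contribution, nor the content contribution, nor the coupling coming from the constraint $i<j$ (the only place where the two classes genuinely interact) is individually as simple as $n_an_b$; only after the subtraction, and after using that each $w^{a}$ has charge zero, do the terms that are linear in the positions and the $t$-core–type terms telescope away. When $k\neq 0$ every surviving pair has $a\neq b$ and leaves precisely $2t^{2}n_an_b$; keeping track of whether $b=a+k$ or $b=a+k-t$ (respectively $b=a-k$ or $b=a-k+t$) is what separates the two index ranges and produces exactly $2t^{2}\bigl(\sum_{i=0}^{t-1-k}n_in_{i+k}+\sum_{i=0}^{k-1}n_in_{i+t-k}\bigr)$. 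When $k=0$ the only surviving pairs are the diagonal ones $a=b$; there the multiples of $t$ among the hooks are $t$ times the hooks of $\lambda^{a}$ (the second fundamental property of the decomposition), so this case reduces to the $t=1$ instance of the theorem applied to each quotient, namely $\sum_{\square\in\lambda^{a}}h_\square^{2}-\sum_{\square\in\lambda^{a}}c_\square^{2}=n_a^{2}$, and reassembles to $2t^{2}\sum_a n_a^{2}$.

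A cleaner repackaging of the same computation, one that also explains why the classes $k$ and $t-k$ are grouped, is to introduce the twisted sums $\sum_{\mathrm{inv}}(j-i)^{2}\zeta^{\,j-i}$ and $\sum_{c}N_c\,c^{2}\zeta^{\,c}$ for $\zeta$ a $t$-th root of unity and to extract the required combination through $\zeta^{k}+\zeta^{-k}$, i.e.\ the cosine (real) part. Because $\zeta^{t}=1$ forces $\zeta^{\,j-i}=\zeta^{\,b-a}$, the twist depends only on the residues and so diagonalizes the decomposition, making both twisted sums factor over the quotients. Throughout I would use the $t=1$, $k=0$ specialization, which must reduce to the classical identity $\sum_\square h_\square^{2}-\sum_\square c_\square^{2}=|\lambda|^{2}$, as a running check on the signs and on the constant $2t^{2}$.
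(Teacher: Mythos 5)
Your reduction to 01-sequences is sound: the hook sums are inversion sums by Lemma~\ref{th:01sequencehookcontent}, your formula for the number $N_c$ of boxes of content $c$ is correct, and for $\lambda_\tcore=\emptyset$ the residue-$a$ subsequence is indeed the (balanced) 01-sequence of $\lambda^a$. The $k=0$ case as you describe it is essentially complete, resting on the two fundamental properties of the Littlewood decomposition and the classical identity of Corollary~\ref{th:hookcontentsquare2}. But for $k\neq 0$ your proposal stops exactly where the theorem lives: the assertion that each pair $(a,b)$ with $b-a\equiv\pm k$ ``leaves precisely $2t^2n_an_b$'' after subtracting the matching content terms \emph{is} the theorem, restated per pair, and you flag it as the main obstacle and ``expect'' it rather than prove it. Concretely, what has to be shown is that for the balanced sequences $u=w^a$, $v=w^b$ and $\delta=b-a$,
\begin{multline*}
\sum_{\substack{p\le q\\ u_p=1,\ v_q=0}}\bigl(t(q-p)+\delta\bigr)^2
+\sum_{\substack{q<p\\ v_q=1,\ u_p=0}}\bigl(t(p-q)-\delta\bigr)^2\\
-\sum_{\square\in\lambda^a}\bigl(tc_\square-\delta\bigr)^2
-\sum_{\square\in\lambda^b}\bigl(tc_\square+\delta\bigr)^2
\;=\;2t^2\,n_a n_b,
\end{multline*}
and nothing in the proposal supplies this: the charge-zero remark is a necessary hypothesis, not an argument, and the root-of-unity repackaging only diagonalizes the bookkeeping by residues (which you already have); it does not evaluate the mixed double sums.

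This gap is genuine and not routine to fill. The paper proves the statement (via the more general Theorems~\ref{th:hookcontentsquare} and \ref{th:hookcontentsquare1}, specialized to $\mu=\emptyset$) by induction on $|\lambda|$, adding one box to a quotient at a time: Lemma~\ref{th:hookdiff} expresses the change of the hook sums through the inner and outer corners of $\lambda^{i'}$ and $\lambda^{i''}$ (where $i'\equiv i+k$, $i''\equiv i-k$), Lemma~\ref{th:contentadd} gives the change of the content sums, and the increment collapses to $2t^2(n_{i'}+n_{i''})$ precisely because of the corner identities $q_1(\cdot)=0$ and $q_2(\cdot)=2|\cdot|$ of Theorem~\ref{th:dqnu1}(1); these identities (or an equivalent fact about balanced sequences) are exactly what your ``telescoping'' needs and never invokes. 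A warning sign that the cancellation is delicate: for a general $t$-core the linear-in-position terms do \emph{not} vanish --- they survive as the $d_iq_3(\lambda^j)$ and $d_i$-linear terms in Theorem~\ref{th:hookcontentsquare} --- and they disappear here only because $d_i=0$ when $\mu=\emptyset$. So to complete your route you must either prove the displayed per-pair identity directly (importing the corner identities into your double-sum setting), or revert to single-box induction, which is the paper's proof.
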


By letting $t=1$
we obtain the following well-known result (see \cite{Macdonald}).

\begin{cor}\label{th:hookcontentsquare2}
 For every partition $\lambda$ we have
\begin{align*}
\sum\limits_{\substack{\square\in\lambda}}h_\square^2 - \sum\limits_{\substack{\square\in\lambda\\
}}c_\square^2 = |\lambda|^2.
\end{align*}
\end{cor}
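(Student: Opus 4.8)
The plan is to fix a positive integer $t$, assume first that $\lambda_\tcore=\emptyset$, and sum the identity \eqref{eq:h2c2} of Theorem~\ref{th:hookcontentsquare:mu0} over all residues $0\le k\le t-1$. On the left-hand side I would track how often each congruence class contributes. As $k$ runs from $0$ to $t-1$, the classes $k\bmod t$ and $(t-k)\bmod t$ each sweep out all of $\{0,1,\dots,t-1\}$ exactly once, so every box is counted precisely twice among the hook-length sums and precisely twice among the content sums. Hence the summed left-hand side collapses to $2\bigl(\sum_{\square\in\lambda}h_\square^2-\sum_{\square\in\lambda}c_\square^2\bigr)$, with no boundary cases to worry about.

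The crux is the right-hand side. Writing $n_i=|\lambda^i|$, I would show that $\sum_{k=0}^{t-1}\bigl(\sum_{i=0}^{t-1-k}n_in_{i+k}+\sum_{i=0}^{k-1}n_in_{i+t-k}\bigr)=\bigl(\sum_{i=0}^{t-1}n_i\bigr)^2$. The first family of terms ranges exactly over the ordered pairs $(i,j)$ with $0\le i\le j\le t-1$, grouped by the gap $j-i=k$, producing $\sum_i n_i^2+\sum_{i<j}n_in_j$; the second family, after the substitution $d=t-k$, ranges exactly over the pairs with $0\le i<j\le t-1$, grouped by the gap $j-i=d$, producing the remaining $\sum_{i<j}n_in_j$. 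Together these reassemble $(\sum_i n_i)^2=n^2$, so the summed right-hand side equals $2t^2n^2$.

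Finally I would invoke the Littlewood identity $|\lambda|=|\lambda_\tcore|+t\sum_i n_i$: since $\lambda_\tcore=\emptyset$ forces $|\lambda|=tn$, we get $2t^2n^2=2|\lambda|^2$. Equating the two sides and dividing by $2$ yields $\sum_{\square}h_\square^2-\sum_{\square}c_\square^2=|\lambda|^2$ for every partition with empty $t$-core. To reach \emph{all} partitions, I would specialise to $t=1$: the only $1$-core is $\emptyset$, since every hook length is a multiple of $1$, so every partition has empty $1$-core and the identity holds universally.

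The main obstacle I anticipate is the bookkeeping in the right-hand double sum, in particular checking that the two triangular families of index pairs tile the full square $\{(i,j):0\le i,j\le t-1\}$ without overlap or omission, and that the $k=0$ term, where the second inner sum is empty, correctly supplies the diagonal contribution $\sum_i n_i^2$. Everything else is routine.
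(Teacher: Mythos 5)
Your proposal is correct and follows essentially the same route as the paper, which likewise obtains the corollary by summing identity \eqref{eq:h2c2} over $0\leq k\leq t-1$ and remarks that the $t=1$ specialisation also yields it. You are in fact slightly more careful than the paper: you note explicitly that the summation argument only covers partitions with $\lambda_\tcore=\emptyset$, and you close that gap via the $t=1$ case (the unique $1$-core being $\emptyset$), exactly the point the paper's terse remark is implicitly relying on.
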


\medskip

We now turn to $t$-Plancherel averages, introducing $F_{\lambda/ \mu}$ and $G_\lambda$ as weights in the previous results and summing over $\lambda$.
The results are stated in two cases according to the divisibility of the hook lengths by $t$.

\begin{thm} \label{th:hooksquare1} Suppose that $\mu$ is a given $t$-core partition and $1\leq k\leq t-1$.
  Then we have
\begin{align*}\ & \sum\limits_{\substack{\lambda_\tcore=\mu\\
|\lambda/\mu|=nt }}F_{\lambda/\mu}G_\lambda\Biggl(\sum\limits
_{\substack{\square\in\lambda\\
h_\square\equiv
k(\modsymb t )}}h_\square^2+\sum\limits_{\substack{\square\in\lambda\\
h_\square\equiv t-k(\modsymb t
)}}h_\square^2\Biggr)\\
&=6t\binom{n}{2}+\Bigl(2k(t-k)
+4t|\mu(k)|+4t|\mu(t-k)|\Bigr)n\\
&\qquad\qquad +\Biggl(\sum\limits_
{\substack{\square\in\mu\\
h_\square\equiv
k(\modsymb t )}}h_\square^2+\sum\limits_{\substack{\square\in\mu\\
h_\square\equiv t-k(\modsymb t )}}h_\square^2\Biggr),
\end{align*} 
where $\mu(k)=\{h\in \mathcal{H}(\mu) \mid h\equiv k \pmod t\}$, viewed as a multiset.
In
particular, let $\mu=\emptyset$. Then we have
\begin{align*}\ & \sum\limits_{\substack{\lambda_\tcore=\emptyset\\
|\lambda|=nt }}F_{\lambda}G_\lambda\Biggl(\sum\limits
_{\substack{\square\in\lambda\\
h_\square\equiv
k(\modsymb t )}}h_\square^2+\sum\limits_{\substack{\square\in\lambda\\
h_\square\equiv t-k(\modsymb t
)}}h_\square^2\Biggr)\\&=6t\binom{n}{2}+2k(t-k)n.\end{align*}
\end{thm}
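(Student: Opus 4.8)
The plan is to read $P(n)$ as an average against the $t$-Plancherel measure and to evaluate it by trading the hook-length statistic, which transforms badly under the addition of $t$-hooks, for statistics on contents and on the quotient sizes. First I would make the measure explicit through the Littlewood decomposition. Writing $n_i=|\lambda^i|$ and $n=\sum_{i}n_i$, the property $\{h/t\mid h\in\mathcal{H}_t(\lambda)\}=\bigcup_i\mathcal{H}(\lambda^i)$ gives $G_\lambda=t^{-n}\prod_i H_{\lambda^i}^{-1}$, while $F_{\lambda/\mu}=\binom{n}{n_0,\dots,n_{t-1}}\prod_i f_{\lambda^i}$ since $\mu$ is a $t$-core and thus every $\mu^i=\emptyset$. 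Hence
\[
F_{\lambda/\mu}G_\lambda=t^{-n}\binom{n}{n_0,\dots,n_{t-1}}\prod_{i=0}^{t-1}\frac{f_{\lambda^i}^2}{n_i!}.
\]
Summing over the quotients with $(n_0,\dots,n_{t-1})$ fixed and invoking $\frac{1}{m!}\sum_{|\nu|=m}f_\nu^2=1$ from \eqref{eq:hookformula}, the mass of $\{(|\lambda^i|)_i=(n_i)_i\}$ equals $t^{-n}\binom{n}{n_0,\dots,n_{t-1}}$; in particular the total mass is $1$ and the quotient sizes follow a multinomial law over $t$ equiprobable cells. Writing $\langle\phi\rangle_n$ for the resulting average of a statistic $\phi$, this yields $\langle n_i\rangle_n=n/t$ and $\langle n_in_j\rangle_n=n(n-1)/t^2$ for $i\neq j$.

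Next I would apply Theorem~\ref{th:hookcontentsquare:mu0}, after first extending it from $\mu=\emptyset$ to an arbitrary $t$-core $\mu$ by the same $01$-sequence bookkeeping, in order to rewrite, for each fixed $\lambda$ with $\lambda_\tcore=\mu$,
\[
\sum_{\substack{\square\in\lambda\\ h_\square\equiv\pm k\,(\modsymb t)}}\!\!h_\square^2=\sum_{\substack{\square\in\lambda\\ c_\square\equiv\pm k\,(\modsymb t)}}\!\!c_\square^2+2t^2\Bigl(\sum_{i=0}^{t-1-k}n_in_{i+k}+\sum_{i=0}^{k-1}n_in_{i+t-k}\Bigr)+R_\mu(\lambda),
\]
where the $\pm k$ sums abbreviate the two classes $k$ and $t-k$, and $R_\mu$ is a correction that is affine in the $n_i$, with coefficients governed by $\mu(k)$ and $\mu(t-k)$ and a constant part depending only on $\mu$. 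The quadratic sum has exactly $t$ terms, each of distinct indices, so averaging it and multiplying by $2t^2$ contributes $2t^2\cdot t\cdot n(n-1)/t^2=4t\binom{n}{2}$, while $\langle R_\mu\rangle_n$ needs only the first moment $\langle n_i\rangle_n=n/t$ and so is linear in $n$.

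The remaining and most delicate ingredient is the content average $\bigl\langle\sum_{c_\square\equiv\pm k}c_\square^2\bigr\rangle_n$. Here I would use that a $t$-hook is a border strip occupying $t$ consecutive diagonals, so the contents of its $t$ boxes are $t$ consecutive integers and hence meet each residue class modulo $t$ exactly once. Thus in passing from $\lambda$ to $\lambda^+$ the statistic increases by $a^2+b^2$, where $a\equiv k$ and $b\equiv t-k$ are the two prescribed contents of the added strip; moreover, under the Littlewood decomposition adding a $t$-hook is the same as adding one box to a single quotient $\lambda^i$, and $a,b$ are then determined by the index $i$, by the location of the new box inside $\lambda^i$, and by the offset of the core $\mu$ in class $i$. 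Summing these increments over the $n$ steps of the process, with the multinomial choice of which quotient grows, produces a polynomial of degree $2$ in $n$; the check at $\mu=\emptyset$, $n=1$ (the $t$ border strips of size $t$, each of weight $1/t$) already returns $2k(t-k)$, the expected linear coefficient.

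Finally I would assemble the three contributions. The constant term comes from $n=0$, where $\lambda=\mu$ and $F_{\mu/\mu}=G_\mu=1$, recovering exactly the $\mu$-hook-square sum on the right-hand side. The quadratic coefficients $2t$ (contents) and $4t$ (quotient products) combine to $6t\binom{n}{2}$, and the linear coefficient $2k(t-k)$ (contents) together with $\langle R_\mu\rangle_n$ assembles into $2k(t-k)+4t|\mu(k)|+4t|\mu(t-k)|$; putting $\mu=\emptyset$ annihilates $R_\mu$ and both $|\mu(\cdot)|$ terms and returns the stated special case. Polynomiality and the degree bound may alternatively be quoted from Theorem~\ref{th:main'} with $u'=1$, $v'=0$, $k_1=1$, but the computation above supplies them as well. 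I expect the main obstacle to be exactly the content bookkeeping of the third paragraph combined with the general-$\mu$ form of Theorem~\ref{th:hookcontentsquare:mu0}: tracking precisely which residues the border-strip contents occupy, and how the core offsets of $\mu$ displace them, is what produces the arithmetic constant $2k(t-k)$ and the core terms $4t|\mu(k)|,\ 4t|\mu(t-k)|$, whereas the multinomial moments and the hook-to-content reduction are comparatively routine.
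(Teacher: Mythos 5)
Your strategy is genuinely different from the paper's: you make the $t$-Plancherel measure explicit (multinomial law on quotient sizes times independent Plancherel measures on the quotients) and then trade hook squares for content squares via Theorem \ref{th:hookcontentsquare:mu0}, whereas the paper never passes through contents at all — it telescopes $P(n+1)-P(n)$ directly with Lemma \ref{th:telescope}, computes the increment of the hook statistic via Case (ii) of Lemma \ref{th:hookdiff}, averages it with the corner moments of Theorems \ref{th:dqnu1} and \ref{th:h012}, and extracts the constant via \eqref{eq:Bk2} and Lemma \ref{th:cibimu}. Unfortunately, your reduction has a genuine gap at its center. The general-$\mu$ extension of Theorem \ref{th:hookcontentsquare:mu0} is the paper's Theorem \ref{th:hookcontentsquare}, and its correction term is \emph{not} affine in the $n_i$: it contains $-\frac13 t^2\bigl(d_j q_3(\lambda^i)+d_i q_3(\lambda^j)\bigr)$, where $q_3(\lambda^i)$ depends on the corner structure of the quotient and not merely on its size; these terms vanish only when $\mu=\emptyset$ (all $d_i=0$). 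Consequently your claim that $\langle R_\mu\rangle_n$ ``needs only the first moment $\langle n_i\rangle_n$'' fails as stated. It can be rescued, but only by the additional fact that the Plancherel average of $q_3$ vanishes identically (equivalently $D\bigl(q_3(\lambda)/H_\lambda\bigr)=0$, which follows from Theorems \ref{th:dqnu1}(2) and \ref{th:h012}); that ingredient is nowhere in your outline.

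Two further steps are asserted rather than proved, and they are exactly where the arithmetic of the answer comes from. First, the content average of your third paragraph: degree-$2$ polynomiality and its coefficients cannot be inferred from the single check $(\mu,n)=(\emptyset,1)$; the honest computation (the paper's Theorem \ref{th:contentsquare1}) requires the weighted corner moments $\sum_i \frac{H_{\lambda}}{H_{\lambda^{i+}}}x_i=0$ and $\sum_i \frac{H_{\lambda}}{H_{\lambda^{i+}}}x_i^2=|\lambda|$ of Theorem \ref{th:h012}, which you never invoke, and for general $\mu$ the linear coefficient is $\frac1t\sum_i (b_i-i')^2$ (with $i-i'\equiv k \pmod t$), not $k(t-k)$, so your clean split ``contents give $2k(t-k)$, $R_\mu$ gives the core terms'' is not justified. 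Second, nothing in your proposal converts the resulting expressions in the offsets $b_i,d_i$ into the stated coefficient $2k(t-k)+4t|\mu(k)|+4t|\mu(t-k)|$: this needs the identity $|\mu(k)|+|\mu(t-k)|=\sum_{(i,j)\in B_k}\binom{d_i-d_j}{2}$ of Lemma \ref{th:cibimu} together with \eqref{eq:Bk2}, which is precisely the combinatorial core of the paper's argument. Your description of the measure and the $6t\binom{n}{2}$ bookkeeping are correct, but as it stands the proof is incomplete for general $\mu$ and only heuristic for the content contribution even when $\mu=\emptyset$.
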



\begin{thm} \label{th:hooksquare2} Suppose that $\mu$ is a given $t$-core partition.
Then we have \begin{align*} \sum\limits_{\substack{\lambda_\tcore=\mu\\
|\lambda/\mu|=nt }}F_{\lambda/\mu}G_\lambda \sum\limits_{\substack{
\square\in\lambda\\
h_\square\equiv 0(\modsymb t
)}}h_\square^2=nt^2+3t\binom{n}{2}.
\end{align*} 
\end{thm}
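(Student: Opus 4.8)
The plan is to prove Theorem~\ref{th:hooksquare2} by applying the main result, Theorem~\ref{th:main'}, in the special case where the only statistic present is a single hook-length sum over boxes whose hook length is divisible by $t$. In the notation of Theorem~\ref{th:main'}, I would take $u'=1$, $j_1=0$, $k_1=1$, and $v'=0$, so that the inner expression is simply $G_\lambda\sum_{h_\square\equiv 0}h_\square^2$. Theorem~\ref{th:main'} then immediately guarantees that $P(n)$ is a polynomial in $n$, and moreover that $D_t^r$ annihilates the summand for some fixed $r$. The actual content is therefore to compute this polynomial explicitly and verify it equals $nt^2+3t\binom{n}{2}$.

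The decisive simplification is that hook lengths divisible by $t$ are governed entirely by the Littlewood decomposition. By the fundamental property $\{h/t\mid h\in\mathcal{H}_t(\lambda)\}=\bigcup_{i=0}^{t-1}\mathcal{H}(\lambda^i)$, I would rewrite
\begin{equation*}
\sum_{\substack{\square\in\lambda\\ h_\square\equiv 0\,(\modsymb t)}}h_\square^2
= t^2\sum_{i=0}^{t-1}\sum_{\square\in\lambda^i}h_\square^2,
\end{equation*}
which depends only on the quotients $\lambda^0,\ldots,\lambda^{t-1}$ and not on the $t$-core $\mu$. This is the key structural observation: unlike the other congruence classes treated in Theorem~\ref{th:hooksquare1}, the $t$-core makes no contribution here, explaining the clean answer with no $\mu$-dependent term.

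With this substitution, I would express the sum using the weights $F_{\lambda/\mu}$ and $G_\lambda$ in terms of quotient data. Recalling $F_{\lambda/\mu}=\binom{N}{|\lambda^0/\mu^0|,\ldots}\prod_i f_{\lambda^i/\mu^i}$ and $G_\lambda=1/\prod_{h\in\mathcal{H}_t(\lambda)}h$, and since $\mu$ is a $t$-core so that $\mu^i=\emptyset$, the combination $F_{\lambda/\mu}G_\lambda$ should collapse, after normalising by $1/(N!\,t^N)$ as in the definition of $G_\lambda$, into a product of ordinary Plancherel weights $f_{\lambda^i}^2/|\lambda^i|!$ across the quotients together with a multinomial factor that redistributes the total size $n=\sum_i|\lambda^i|$. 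The problem thus factors into a sum over tuples $(\lambda^0,\ldots,\lambda^{t-1})$ with $\sum_i|\lambda^i|=n$ of a product of independent single-partition Plancherel averages of $\sum_{\square\in\lambda^i}h_\square^2$.

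The remaining step is to evaluate the single-partition Plancherel average of the square sum of hook lengths, which is the $t=1$ instance of the theory: by Theorem~\ref{th:stanley} (the hook case~\eqref{eq:stanleyhook} with $Q$ the first power sum in $h_\square^2$), the average $\frac{1}{m!}\sum_{|\nu|=m}f_\nu^2\sum_{\square\in\nu}h_\square^2$ is a known polynomial in $m$; combined with Corollary~\ref{th:hookcontentsquare2} one sees it equals $\binom{m}{2}+m=\binom{m+1}{2}$ type expressions after pairing with the content identity, and I would pin down the exact value by a direct small computation. Summing the resulting per-quotient contributions against the multinomial distribution of $n$ among the $t$ quotients — a standard computation producing a linear term in $n$ plus a term proportional to $\binom{n}{2}$ once the expected value and cross terms are collected — should yield $nt^2+3t\binom{n}{2}$. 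The main obstacle I anticipate is bookkeeping the multinomial factor carefully so that the $t^2$ from rescaling hooks, the $t$ quotients, and the distribution of sizes combine with the correct constants; the polynomiality itself is free from Theorem~\ref{th:main'}, so the effort is entirely in getting the two coefficients right.
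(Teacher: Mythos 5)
Your proposal is correct, but it takes a genuinely different route from the paper. The paper never factorizes the sum over quotients: it computes the first difference $P(n+1)-P(n)$ directly, using Case (i) of Lemma~\ref{th:hookdiff} to express the change in $\sum_{h_\square\equiv 0}h_\square^2$ when a box of content $c_{\square_i}$ is added to the quotient $\lambda^i$ as $t^2+t^2c_{\square_i}^2+2t^2n_i$, then averaging this with the hook-walk identities (Theorem~\ref{th:h012} and $D(1/H_\lambda)=0$) to get $P(n+1)-P(n)=t^2+3nt$, and finally telescoping via Lemma~\ref{th:telescope} with $P(0)=0$. You instead push everything through the Littlewood decomposition: $F_{\lambda/\mu}G_\lambda=t^{-n}\binom{n}{n_0,\ldots,n_{t-1}}\prod_i f_{\lambda^i}^2/n_i!$ and $\sum_{h_\square\equiv 0}h_\square^2=t^2\sum_i\sum_{\square\in\lambda^i}h_\square^2$, so the $t$-Plancherel average factors into independent classical Plancherel averages on the quotients, weighted by a multinomial distribution; with the per-quotient average equal to $\binom{m}{2}+m^2=m+3\binom{m}{2}$ and the binomial moments $\mathbb{E}[n_j]=n/t$, $\mathbb{E}\bigl[\binom{n_j}{2}\bigr]=\binom{n}{2}/t^2$, summing over $j$ and multiplying by $t^2$ gives exactly $nt^2+3t\binom{n}{2}$. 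One correction: your parenthetical guess $\binom{m}{2}+m=\binom{m+1}{2}$ for the per-quotient average is wrong — Corollary~\ref{th:hookcontentsquare2} adds $|\lambda|^2$, not $|\lambda|$, to the content average $\binom{m}{2}$, so the correct value is $\binom{m}{2}+m^2$ — but since you flagged this value as provisional and the method for pinning it down (content average plus the corollary, or small cases plus polynomiality from Theorem~\ref{th:main'}) is sound, the argument closes. As for what each approach buys: yours reduces the theorem to $t=1$ theory plus elementary probability, and makes transparent both why the core $\mu$ drops out and where the coefficient $3$ comes from; the paper's difference-operator computation is uniform with the proofs of Theorems~\ref{th:hooksquare1} and~\ref{th:contentsquare1}, where your factorization would break down, because hooks in classes $\pm k\not\equiv 0 \pmod t$ couple pairs of distinct quotients and involve the constants $b_i$, so independence across quotients fails for those congruence classes.
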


By Theorems \ref{th:hooksquare1} and  \ref{th:hooksquare2} we obtain the following result.

\begin{cor} \label{th:hooksquare3} Suppose that $\mu$ is a given $t$-core partition.
  Then \begin{align*}\ & \sum\limits_{\substack{\lambda_\tcore=\mu\\
|\lambda/\mu|=nt }}F_{\lambda/\mu}G_\lambda\sum\limits_{\substack{\square\in\lambda\\
}}h_\square^2=\frac{3t^2n^2}{2}+\frac{nt(t^2-3t-1+24|\mu|)}{6}+\sum\limits_{\substack{\square\in\mu\\
}}h_\square^2.\end{align*}  In
particular, let $\mu=\emptyset$. Then we have
\begin{align*} \sum\limits_{\substack{\lambda_\tcore=\emptyset\\
|\lambda|=nt }}F_{\lambda}G_\lambda
\sum\limits_{\substack{\square\in\lambda}}h_\square^2=
\frac{3t^2n^2}{2}+\frac{nt(t^2-3t-1)}{6}.\end{align*}
\end{cor}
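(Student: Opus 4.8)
The plan is to reduce the statement to the two preceding theorems by splitting the full hook-square sum according to the residue of the hook lengths modulo $t$. For any partition $\lambda$ one has the trivial decomposition
\begin{equation*}
\sum_{\square\in\lambda}h_\square^2=\sum_{\substack{\square\in\lambda\\ h_\square\equiv 0\,(\modsymb t)}}h_\square^2+\sum_{k=1}^{t-1}\sum_{\substack{\square\in\lambda\\ h_\square\equiv k\,(\modsymb t)}}h_\square^2,
\end{equation*}
so after multiplying by $F_{\lambda/\mu}G_\lambda$ and summing over all $\lambda$ with $\lambda_\tcore=\mu$ and $|\lambda/\mu|=nt$, the residue-$0$ part is handled directly by Theorem~\ref{th:hooksquare2}, while the contribution of the residues $1,\ldots,t-1$ must be assembled from Theorem~\ref{th:hooksquare1}.

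For the nonzero residues, the idea is to sum the identity of Theorem~\ref{th:hooksquare1} over $k=1,\ldots,t-1$. The key bookkeeping point is that the left-hand side of that theorem pairs the classes $k$ and $t-k$, so as $k$ runs over $1,\ldots,t-1$ each class is counted exactly twice; hence the total left-hand side equals twice the weighted sum $\sum_{k=1}^{t-1}\sum_{h_\square\equiv k}h_\square^2$. On the right-hand side I would evaluate the resulting sums separately: the term constant in $\lambda$ contributes $6t(t-1)\binom{n}{2}$, and the quadratic term requires only the elementary evaluation $\sum_{k=1}^{t-1}k(t-k)=t(t^2-1)/6$, giving a contribution of $nt(t^2-1)/3$.

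The decisive simplification comes from the hypothesis that $\mu$ is a $t$-core: none of its hook lengths is divisible by $t$, so $\sum_{k=1}^{t-1}|\mu(k)|=|\mu|$ and $\sum_{k=1}^{t-1}\sum_{h\in\mu(k)}h^2=\sum_{\square\in\mu}h_\square^2$. This collapses the two $\mu$-dependent terms of Theorem~\ref{th:hooksquare1} into $8tn|\mu|$ and $2\sum_{\square\in\mu}h_\square^2$ respectively. Dividing by the factor $2$ and adding the residue-$0$ contribution $nt^2+3t\binom{n}{2}$ from Theorem~\ref{th:hooksquare2}, the two binomial terms combine as $3t(t-1)\binom{n}{2}+3t\binom{n}{2}=3t^2\binom{n}{2}$, which supplies the leading $\tfrac{3t^2n^2}{2}$ after expanding $\binom{n}{2}$. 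Collecting all linear-in-$n$ contributions (including the $-\tfrac{3t^2n}{2}$ coming from the same binomial) over the common denominator $6$ then yields $nt(t^2-3t-1+24|\mu|)/6$, matching the claimed formula. The special case $\mu=\emptyset$ is immediate, since then $|\mu|=0$ and $\sum_{\square\in\mu}h_\square^2=0$. I expect the only real obstacle to be the careful tracking of the double-counting factor $2$ and the sign bookkeeping in the linear term; everything else is routine polynomial arithmetic.
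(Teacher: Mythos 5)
Your proposal is correct and is exactly the paper's argument: the paper derives Corollary~\ref{th:hooksquare3} precisely by summing Theorem~\ref{th:hooksquare1} over $k=1,\ldots,t-1$ (with the factor-$2$ double counting of residue classes) and adding the residue-$0$ contribution from Theorem~\ref{th:hooksquare2}. Your bookkeeping --- $\sum_{k=1}^{t-1}k(t-k)=t(t^2-1)/6$, the collapse of the $\mu$-terms to $8tn|\mu|$ and $2\sum_{\square\in\mu}h_\square^2$ via the $t$-core hypothesis, and the recombination $3t(t-1)\binom{n}{2}+3t\binom{n}{2}=3t^2\binom{n}{2}$ --- all checks out.
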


Similar results for contents can also be obtained.

\begin{thm} \label{th:contentsquare11}
Let $0\leq k\leq t-1$. We have 
 \begin{align*} \sum\limits_{\substack{\lambda_\tcore=\emptyset\\
|\lambda|=nt }}F_{\lambda}G_\lambda
\sum\limits_{\substack{\square\in\lambda\\
c_\square\equiv k(\modsymb t)}}c_\square^2= t\binom{n}{2}+
    k(t-k)n.\end{align*}
\end{thm}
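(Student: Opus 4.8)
The plan is to derive Theorem~\ref{th:contentsquare11} from the hook-length averages already obtained (Theorems~\ref{th:hooksquare1} and~\ref{th:hooksquare2}) by averaging the per-partition identity of Theorem~\ref{th:hookcontentsquare:mu0}, and then invoking conjugation symmetry to separate the residue class $k$ from $t-k$.

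First I would record the probabilistic meaning of the weights. Since $F_\lambda=n!\,t^nG_\lambda$ and $\sum_{|\nu|=m}f_\nu^2=m!$, a direct computation gives $\sum_{\lambda_\tcore=\emptyset,\,|\lambda|=nt}F_\lambda G_\lambda=1$, so $F_\lambda G_\lambda$ is a probability measure on the partitions in question; write $\mathbb{E}$ for the corresponding expectation. Marginalizing over the shapes of the quotients of prescribed sizes, the same identity $\sum_{|\nu|=m}f_\nu^2=m!$ shows that $(n_0,\dots,n_{t-1})=(|\lambda^0|,\dots,|\lambda^{t-1}|)$ is multinomial with parameters $(n;1/t,\dots,1/t)$. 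The only moments needed are the (symmetric) factorial moments $\mathbb{E}[n_in_j]=\mathbb{E}[n_i(n_i-1)]=n(n-1)/t^2$ for $i\neq j$, whence $\mathbb{E}\bigl[\sum_i n_i^2\bigr]=n(n-1)/t+n$.

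Now fix $1\le k\le t-1$ and apply $\mathbb{E}$ to the identity of Theorem~\ref{th:hookcontentsquare:mu0}. Its right-hand side is $2t^2$ times a sum of exactly $t$ \emph{off-diagonal} products (there are $t-k$ terms $n_in_{i+k}$ and $k$ terms $n_in_{i+t-k}$), so it averages to $2t^2\cdot t\cdot n(n-1)/t^2=2t\,n(n-1)=4t\binom n2$. Combining with the hook average $6t\binom n2+2k(t-k)n$ from Theorem~\ref{th:hooksquare1} gives
\[
\mathbb{E}\Big[\sum_{c_\square\equiv k}c_\square^2+\sum_{c_\square\equiv t-k}c_\square^2\Big]=2t\binom n2+2k(t-k)n .
\]
Conjugation $\lambda\mapsto\lambda'$ negates every content while preserving $c_\square^2$ and sending the class $k$ to $t-k\pmod t$; it also preserves $|\lambda|$, the condition $\lambda_\tcore=\emptyset$, and (because the hook multiset is conjugation-invariant, hence so are $G_\lambda$ and $F_\lambda=n!\,t^nG_\lambda$) the weight $F_\lambda G_\lambda$. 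Therefore $\mathbb{E}[\sum_{c\equiv k}c^2]=\mathbb{E}[\sum_{c\equiv t-k}c^2]$, so each equals half of the displayed sum, namely $t\binom n2+k(t-k)n$, as claimed.

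The case $k=0$ is handled the same way but is simpler: here $t-k\equiv 0$, so Theorem~\ref{th:hookcontentsquare:mu0} reduces to $\sum_{h\equiv 0}h^2-\sum_{c\equiv 0}c^2=t^2\sum_i n_i^2$, and averaging with $\mathbb{E}\bigl[\sum_i n_i^2\bigr]=n(n-1)/t+n$ together with the hook average $nt^2+3t\binom n2$ from Theorem~\ref{th:hooksquare2} yields $\mathbb{E}[\sum_{c\equiv 0}c^2]=t\binom n2$, matching the formula at $k=0$. I expect the only genuine work to be the two structural facts about the weights (the normalization to a probability measure and the multinomial marginal of the quotient sizes); once these are in place the remainder is bookkeeping, and a consistency check against Corollaries~\ref{th:hookcontentsquare2} and~\ref{th:hooksquare3} confirms the total over all $k$. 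A fully self-contained alternative, bypassing Theorem~\ref{th:hookcontentsquare:mu0}, would instead start from the content generating identity $\sum_i q^{x_i}-\sum_i q^{y_i}=1+(q+q^{-1}-2)\sum_\square q^{c_\square}$ and apply a roots-of-unity filter in $q$, but that route is considerably more computational.
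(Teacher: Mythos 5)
Your proof is correct, but it takes a genuinely different route from the paper's. The paper obtains Theorem~\ref{th:contentsquare11} as the special case $\mu=\emptyset$ of the more general Theorem~\ref{th:contentsquare1}, which is proved directly by the difference-operator method: Lemma~\ref{th:contentadd} gives the content increment $(tc_{\square_i}+b_i-i')^2$ when a box is added to a quotient, Lemma~\ref{th:telescope} together with Theorem~\ref{th:h012} then yields $P(n+1)-P(n)=tn+\frac1t\sum_{i=0}^{t-1}(b_i-i')^2$, and summing gives the closed form; at $\mu=\emptyset$ one has $b_i=i$, so $\frac1t\sum_{i=0}^{t-1}(b_i-i')^2=k(t-k)$. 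You instead combine three results whose proofs in the paper are independent of this theorem --- the per-partition identity of Theorem~\ref{th:hookcontentsquare:mu0} and the hook averages of Theorems~\ref{th:hooksquare1} and~\ref{th:hooksquare2} --- with two structural observations: first, that the quotient sizes $(n_0,\dots,n_{t-1})$ are multinomially distributed under the weight $F_\lambda G_\lambda$, which is correct since the Littlewood hook property gives $G_\lambda=1/\bigl(t^n\prod_i H_{\lambda^i}\bigr)$, hence $F_\lambda G_\lambda=t^{-n}\binom{n}{n_0,\dots,n_{t-1}}\prod_i f_{\lambda^i}^2/n_i!$ and the shapes sum out by $\sum_{|\nu|=m}f_\nu^2=m!$; and second, that conjugation preserves the weight (the hook multiset is conjugation-invariant) and the constraint $\lambda_\tcore=\emptyset$, while swapping the content classes $k$ and $t-k$. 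Both observations are sound, the arithmetic checks out in both the $1\le k\le t-1$ and $k=0$ cases, and there is no circularity since none of the quoted results depend on Theorem~\ref{th:contentsquare11}. What your route buys is a nice probabilistic insight (the multinomial marginal of quotient sizes under the $t$-Plancherel measure, which the paper never states explicitly) and the avoidance of any new telescoping computation; what it loses is generality: the conjugation symmetry forces $\mu=\emptyset$ (an arbitrary $t$-core is not self-conjugate, and the classes $k$ and $t-k$ then have unequal averages, as the $\frac1t\sum_i(b_i-i')^2$ term shows), so your argument cannot recover the paper's Theorem~\ref{th:contentsquare1}, whereas the paper's direct method handles any $t$-core $\mu$ uniformly.
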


Theorem \ref{th:contentsquare11} implies the following identity.
\begin{align}\label{eq:sum_c2} \sum\limits_{\substack{\lambda_\tcore=\emptyset\\
|\lambda|=nt }}F_{\lambda}G_\lambda
\sum\limits_{\substack{\square\in\lambda}}c_\square^2=
t^2\binom{n}{2}+\frac{(t^3-t)n}{6}.
\end{align}
In fact we will derive a more general result in Theorem \ref{th:contentsquare1} by replacing $\emptyset$ by a general $t$-core partition.

\section{Partitions and $t$-difference operator} \label{sec:Dpoly} 
We now turn our attention to preliminary lemmas on the $t$-difference operators~$D_t^k$, and in particular a theorem outlining how polynomiality of $t$-Plancherel averages can be deduced from the vanishing of $D_t^k$ for large enough $k$.
It is known \cite{hanxiong} that
\begin{equation}\label{eq:DH}
D\Bigl(\frac {1}{H_\lambda}\Bigr)=0.
\end{equation}
We establish a similar result for the $t$-difference operator.

\begin{lem} \label{th:Glambda}
Suppose that $\lambda$ is a partition. Then
$$
D_t (G_\lambda)=0.
$$
In other words,
$$G_{\lambda}=\sum\limits_{\substack{\lambda^+\geq_t\lambda\\
|\lambda^+/\lambda|=t }}G_{\lambda^+}.
$$
\end{lem}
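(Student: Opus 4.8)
The plan is to pass through the Littlewood decomposition and reduce the claim to the already-known identity \eqref{eq:DH} for the ordinary difference operator $D$, applied to each $t$-quotient separately. The point is that $G_\lambda$ factors cleanly over the quotients, and that adding a $t$-hook to $\lambda$ is the same as adding a single box to exactly one of those quotients.

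First I would rewrite $G_\lambda$ in terms of the quotients. By the second fundamental property of the Littlewood decomposition, $\{h/t\mid h\in\mathcal{H}_t(\lambda)\}=\mathcal{H}(\lambda^0)\cup\cdots\cup\mathcal{H}(\lambda^{t-1})$, and the cardinality of $\mathcal{H}_t(\lambda)$ equals $n:=|\lambda^0|+\cdots+|\lambda^{t-1}|$. Hence $\prod_{h\in\mathcal{H}_t(\lambda)}h=t^{n}\prod_{i=0}^{t-1}H_{\lambda^i}$, so that
\[
G_\lambda=\frac{1}{t^{n}\prod_{i=0}^{t-1}H_{\lambda^i}}.
\]
Next I would identify the partitions $\lambda^+$ occurring in the sum. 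Adding a $t$-hook to $\lambda$ means replacing $z_i=0$, $z_{i+t}=1$ by $z_i=1$, $z_{i+t}=0$ in the $01$-sequence; since $i$ and $i+t$ share the same residue modulo $t$, this is precisely a swap of two adjacent entries in the subsequence $(z_{tj+(i\bmod t)})_{j}$ that defines $\lambda^{\,i\bmod t}$. By Lemma~\ref{th:01sequencehookcontent} and the description of the $t$-quotient, such a swap adds a single box to that one quotient and leaves $\lambda_\tcore$ and all the other quotients unchanged. Thus $\lambda^+\mapsto$ (the changed index $i$, together with the new box) is a bijection between $\{\lambda^+:\lambda^+\geq_t\lambda,\ |\lambda^+/\lambda|=t\}$ and $\bigsqcup_{i=0}^{t-1}\{\nu:|\nu/\lambda^i|=1\}$.

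Then the computation is routine bookkeeping. Writing each $\lambda^+$ through its quotient data and using the factorisation of $G$, one gets
\[
\sum_{\substack{\lambda^{+}\geq_t\lambda\\ |\lambda^{+}/\lambda|=t}}G_{\lambda^+}
=\frac{1}{t^{n+1}}\sum_{i=0}^{t-1}\frac{1}{\prod_{j\neq i}H_{\lambda^j}}\sum_{|\nu/\lambda^i|=1}\frac{1}{H_{\nu}}.
\]
Now \eqref{eq:DH}, namely $D(1/H_\bullet)=0$, says exactly that $\sum_{|\nu/\lambda^i|=1}1/H_\nu=1/H_{\lambda^i}$ for each $i$. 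Substituting this collapses every inner sum, turning each of the $t$ outer terms into $\bigl(\prod_{j}H_{\lambda^j}\bigr)^{-1}$; summing the $t$ identical terms and using $t/t^{n+1}=1/t^{n}$ recovers $G_\lambda$ precisely.

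The only genuine content is the bijection of the second paragraph, and that is where I would be most careful: I must check that adding a $t$-hook corresponds to adding one box to a single quotient, with no multiplicities or double counting, so that the reindexing into $\bigsqcup_i\{\nu:|\nu/\lambda^i|=1\}$ is exact. Once that is secured, the whole argument reduces to applying the classical fact $D(1/H_\lambda)=0$ quotient by quotient, and everything else is straightforward algebra.
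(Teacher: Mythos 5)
Your proof is correct and takes essentially the same route as the paper's: both reduce the claim to the identity $D(1/H_\lambda)=0$ applied to each $t$-quotient, using the fact that adding a $t$-hook to $\lambda$ amounts to adding one box to exactly one quotient. The only difference is presentational — you make the factorisation $G_\lambda = 1/\bigl(t^{n}\prod_{i}H_{\lambda^i}\bigr)$ and the hook/box bijection explicit, while the paper encodes the same facts in the ratio $G_{\lambda^+}/G_\lambda = H_{\lambda^i}/\bigl(tH_{(\lambda^i)^+}\bigr)$ and sums it over the quotients.
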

\begin{proof}
Suppose that
$$
\lambda=(\lambda_\tcore;
\lambda^0,\lambda^1,\ldots,\lambda^{i-1},\lambda^i,\lambda^{i+1},\ldots,\lambda^{t-1})
$$
and
$$
\lambda^+=(\lambda_\tcore;
\lambda^0,\lambda^1,\ldots,\lambda^{i-1},(\lambda^{i})^+,\lambda^{i+1},\ldots,\lambda^{t-1})
$$
where $|(\lambda^{i})^+/\lambda^{i}|=1.$  Then, by the definition of
$G_\lambda$ we have
$$
\frac{G_{\lambda^+}}{G_\lambda}=
\frac{H_{\lambda^{i}}}{tH_{(\lambda^{i})^+}}.
$$
By  \eqref{eq:DH} we have
$$
\sum_{|(\lambda^{i})^+/\lambda^{i}|=1}
\frac{G_{(\lambda_\tcore;
\lambda^0,\lambda^1,\ldots,\lambda^{i-1},(\lambda^{i})^+,\lambda^{i+1},\ldots,\lambda^{t-1})}}{G_\lambda}=\frac{1}{t}.
$$
Summing the above equality over $0\leq i\leq t-1,$ we prove our
claim.
\end{proof}
We now prove a commutation relation between the averaging over partitions of a given weight and the $D_t$ operator.

\begin{lem} \label{th:telescope}
Suppose that $\mu$ is a  given partition and $g$ is a function    of
 partitions. For every $n\in\mathbb{N}$, let
$$
P_g(n):=\sum\limits_{\substack{\lambda\geq_t\mu\\
|\lambda/\mu|=nt }}F_{\lambda/\mu}g(\lambda).
$$
Then $$P_g(n+1)-P_g(n)=P_{D_tg}(n).$$

\end{lem}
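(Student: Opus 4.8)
The plan is to prove the identity $P_g(n+1) - P_g(n) = P_{D_t g}(n)$ by a telescoping argument that exploits the recursive definition of the weights $F_{\lambda/\mu}$. First I would write out $P_{D_t g}(n)$ using the definition of the $t$-difference operator from~\eqref{eq:diffDt}, obtaining
$$
P_{D_t g}(n) = \sum_{\substack{\lambda \geq_t \mu \\ |\lambda/\mu| = nt}} F_{\lambda/\mu} \Biggl( \sum_{\substack{\lambda^+ \geq_t \lambda \\ |\lambda^+/\lambda| = t}} g(\lambda^+) - g(\lambda) \Biggr).
$$
The term $\sum_{\lambda} F_{\lambda/\mu}\, g(\lambda)$ is exactly $P_g(n)$, so the task reduces to showing that the double sum equals $P_g(n+1)$.

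Next I would interchange the order of summation in the double sum. Every partition $\lambda^+$ appearing on the right satisfies $\lambda^+ \geq_t \mu$ and $|\lambda^+/\mu| = (n+1)t$, and conversely each such $\lambda^+$ arises from those intermediate $\lambda$ with $\lambda^+ \geq_t \lambda \geq_t \mu$ and $|\lambda^+/\lambda| = t$. Therefore the double sum can be rewritten as
$$
\sum_{\substack{\lambda^+ \geq_t \mu \\ |\lambda^+/\mu| = (n+1)t}} g(\lambda^+) \sum_{\substack{\lambda^+ \geq_t \lambda \geq_t \mu \\ |\lambda^+/\lambda| = t}} F_{\lambda/\mu}.
$$
The inner sum over $\lambda$ is precisely the recursive defining relation for $F_{\lambda^+/\mu}$ given in the excerpt, so it collapses to $F_{\lambda^+/\mu}$. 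This yields $\sum_{\lambda^+} F_{\lambda^+/\mu}\, g(\lambda^+) = P_g(n+1)$, completing the identity.

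The main obstacle I anticipate is justifying the interchange of summation rigorously, namely verifying that the index set $\{(\lambda, \lambda^+) : \lambda^+ \geq_t \lambda \geq_t \mu,\ |\lambda/\mu| = nt,\ |\lambda^+/\lambda| = t\}$ is parametrised without loss or duplication when reorganised by $\lambda^+$ first. This requires the transitivity of $\geq_t$ (so that $\lambda^+ \geq_t \lambda \geq_t \mu$ implies $\lambda^+ \geq_t \mu$) together with the additivity of sizes under the Littlewood decomposition, ensuring $|\lambda^+/\mu| = |\lambda^+/\lambda| + |\lambda/\mu| = (n+1)t$. Since each sum is finite (there are only finitely many partitions of a given size with a fixed $t$-core), no convergence issues arise, and the reindexing is a purely combinatorial bijection. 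Once these bookkeeping points are checked, the recursion defining $F_{\lambda^+/\mu}$ does all the remaining work automatically.
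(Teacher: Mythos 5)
Your proposal is correct and is essentially the paper's own argument run in reverse: the paper expands $P_g(n+1)$ via the recursion $F_{\nu/\mu}=\sum_{\nu^-}F_{\nu^-/\mu}$ and swaps summation order to reach $P_{D_tg}(n)$, while you expand $P_{D_tg}(n)$ and swap summation order to collapse the inner sum back to $F_{\lambda^+/\mu}$, reaching $P_g(n+1)-P_g(n)$. The key ingredients --- the defining recursion for $F_{\lambda/\mu}$, the finite interchange of summation, and size additivity under adding a $t$-hook --- are identical, so no further comparison is needed.
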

\begin{proof}
The proof is straightforward and follows from expanding definitions:
\begin{align*}
P_g(n+1)-P_g(n)
&=\sum\limits_{\substack{\nu\geq_t\mu\\
|\nu/\mu|=(n+1)t }}F_{\nu/\mu}g(\nu)-\sum\limits_{\substack{\lambda\geq_t\mu\\
|\lambda/\mu|=nt }}F_{\lambda/\mu}g(\lambda)\\
&= \sum\limits_{\substack{\nu\geq_t\mu\\
|\nu/\mu|=(n+1)t }}\sum\limits_{\substack{\nu\geq_t\nu^-\geq_t\mu\\
|\nu/\nu^-|=t }}F_{\nu^-/\mu}g(\nu)-\sum\limits_{\substack{\lambda\geq_t\mu\\
|\lambda/\mu|=nt }}F_{\lambda/\mu}g(\lambda)\\
&=\sum\limits_{\substack{\lambda\geq_t\mu\\
|\lambda/\mu|=nt }}F_{\lambda/\mu}\bigl(\sum\limits_{\substack{\lambda^{+}\geq_t\lambda\\
|\lambda^{+}/\lambda|=t}}g(\lambda^+)-g(\lambda)\bigr) \\
&= P_{D_tg}(n).\qedhere
\end{align*}
\end{proof}

{\it Example}. Let  $g(\lambda)=G_\lambda$. Then $D_tg(\lambda)=0$
by Lemma \ref{th:Glambda}, which means that $P_{D_tg}(n)=0$.
Consequently, $P_g(n+1)=P_g(n)=\cdots=P_g(0)=G_\mu,$ or
\begin{equation}
\sum\limits_{\substack{\lambda\geq_t\mu\\
|\lambda/\mu|=nt }}F_{\lambda/\mu} G_\lambda= G_\mu.
\end{equation}
Recall that when $\mu$ is a $t$-core partition, we have $G_\mu=1$. This implies \begin{equation}
\sum\limits_{\substack{\lambda_\tcore=\mu\\
|\lambda/\mu|=nt }}F_{\lambda/\mu} G_\lambda= 1.
\end{equation}

In particular, let $\mu=\emptyset$. We obtain a generalization of
the second identity of \eqref{eq:hookformula}:

\begin{equation}
\sum\limits_{\substack{\lambda_\tcore=\emptyset\\
|\lambda|=nt }}F_{\lambda}G_\lambda= 1,
\end{equation}
or equivalently,
\begin{equation}
\sum_{|\lambda^0|+|\lambda^1|+\cdots+|\lambda^{t-1}|=n}\binom{n}
{|\lambda^0|,|\lambda^1|,\cdots,|\lambda^{t-1}|}\prod_{i=0}^{t-1}\frac{f_{\lambda^i}^2}{|\lambda^i|!}=t^n.
\end{equation}
Formulas such as this one can be used to create new measures on
partitions, called \emph{$t$-Plancherel}.

Recall that the higher-order difference operators for $D_t$ are defined by
induction $D_t^0g:=g$ and $D_t^k g:=D_t(D_t^{k-1} g)$ ($k\geq 1$).
We write $D_t g(\nu) := D_t g(\lambda) |_{\lambda=\nu}$ for a fixed
partition $\nu$.

\medskip

\begin{thm}\label{th:main3}
Let $g$ be a function of partitions and $\mu$ be a given partition.
Then, 
\begin{equation}\label{eq:main3}
P_g(n)=\sum\limits_{\substack{\lambda\geq_t\mu\\
|\lambda/\mu|=nt }}F_{\lambda/\mu}g(\lambda)
=\sum_{k=0}^n\binom{n}{k}D_t^kg(\mu)
\end{equation}
and
\begin{equation}\label{eq:main3b}
D_t^ng(\mu)=\sum_{k=0}^n(-1)^{n+k}\binom{n}{k}P_g(k).
\end{equation}
In particular, if there exists some positive integer $r$ such that
$D_t^r g(\lambda)=0$ for every partition $\lambda\geq_t \mu$, then $P_g(n)$ is
a polynomial in $n$.
\end{thm}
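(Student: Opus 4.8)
The plan is to establish the two explicit formulas \eqref{eq:main3} and \eqref{eq:main3b} first, and then read off the polynomiality claim as an immediate consequence of \eqref{eq:main3}. The key engine is Lemma~\ref{th:telescope}, which tells us that the forward-difference of the averaged quantity $P_g(n)$ equals the average of the $D_t$-transformed weight, i.e. $P_g(n+1)-P_g(n)=P_{D_tg}(n)$. My approach is to iterate this relation. First I would prove \eqref{eq:main3} by induction on $n$. The base case $n=0$ is immediate since $P_g(0)=F_{\mu/\mu}\,g(\mu)=g(\mu)=\binom{0}{0}D_t^0g(\mu)$. For the inductive step I would apply Lemma~\ref{th:telescope} to write $P_g(n+1)=P_g(n)+P_{D_tg}(n)$, then apply the induction hypothesis to \emph{both} terms: to the first with weight $g$, and to the second with weight $D_tg$ (legitimate since Lemma~\ref{th:telescope} and the induction hypothesis hold for arbitrary functions of partitions, in particular $D_tg$). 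This yields
$$
P_g(n+1)=\sum_{k=0}^n\binom{n}{k}D_t^kg(\mu)+\sum_{k=0}^n\binom{n}{k}D_t^{k+1}g(\mu),
$$
and reindexing the second sum together with the Pascal rule $\binom{n}{k}+\binom{n}{k-1}=\binom{n+1}{k}$ collapses this to $\sum_{k=0}^{n+1}\binom{n+1}{k}D_t^kg(\mu)$, completing the induction.

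Next I would obtain the inverse relation \eqref{eq:main3b}. Formula \eqref{eq:main3} expresses the sequence $(P_g(n))_n$ as the binomial transform of the sequence $(D_t^kg(\mu))_k$, so \eqref{eq:main3b} is precisely the inverse binomial transform. I would verify it directly by substituting \eqref{eq:main3} into the right-hand side of \eqref{eq:main3b}, swapping the order of summation, and invoking the standard alternating-binomial identity $\sum_{k=j}^{n}(-1)^{n+k}\binom{n}{k}\binom{k}{j}=\delta_{n,j}$, which isolates the $j=n$ term and returns $D_t^ng(\mu)$. Alternatively one can run the same telescoping induction as above but in the difference-operator direction; either route is routine.

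Finally, the polynomiality statement follows with essentially no further work. If $D_t^rg(\lambda)=0$ for every $\lambda\geq_t\mu$, then in particular $D_t^kg(\mu)=0$ for all $k\geq r$ (since $D_t^kg(\mu)=D_t^{k-r}(D_t^rg)(\mu)$, and the inner operator already vanishes identically on the relevant partitions). Hence for $n\geq r$ the sum in \eqref{eq:main3} truncates to its first $r$ terms:
$$
P_g(n)=\sum_{k=0}^{r-1}\binom{n}{k}D_t^kg(\mu).
$$
Each $\binom{n}{k}$ is a polynomial in $n$ of degree $k$, and the coefficients $D_t^kg(\mu)$ are constants independent of $n$, so $P_g(n)$ agrees with a fixed polynomial of degree at most $r-1$ for all $n\geq r$; since a function $\mathbb{N}\to\mathbb{C}$ agreeing with a polynomial on a cofinite set and defined by the same finite formula everywhere is that polynomial, $P_g(n)$ is polynomial in $n$. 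I expect no genuine obstacle here: the only points requiring care are the bookkeeping in the inductive step of \eqref{eq:main3} (correctly applying the hypothesis to the shifted weight $D_tg$ and handling the index shift in Pascal's rule) and noting that $D_t^r g$ vanishing on \emph{all} $\lambda\geq_t\mu$—not merely at $\mu$—is exactly what guarantees $D_t^kg(\mu)=0$ for every $k\geq r$ rather than just $k=r$.
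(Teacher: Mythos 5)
Your proposal is correct and follows essentially the same route as the paper: the identity \eqref{eq:main3} by induction on $n$ using Lemma~\ref{th:telescope} (applied to both $g$ and $D_tg$) plus Pascal's rule, the inversion \eqref{eq:main3b} via binomial (M\"obius) inversion, and polynomiality by truncating the sum once $D_t^kg(\mu)=0$ for $k\geq r$. One small simplification: your closing cofinite-set argument is unnecessary, since $\binom{n}{k}=0$ whenever $k>n$, so the truncated formula $P_g(n)=\sum_{k=0}^{r-1}\binom{n}{k}D_t^kg(\mu)$ holds for \emph{every} $n\geq 0$, not only $n\geq r$.
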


\begin{proof}
Identity \eqref{eq:main3} will be proved by induction. The case $n=0$
is obvious. Assume that \eqref{eq:main3} is true for some
nonnegative integer $n$. By Lemma \ref{th:telescope} we obtain
\begin{align*}
P_g(n+1)&= P_g(n)+ P_{D_tg}(n)
\\&=\sum_{k=0}^n\binom{n}{k}D_t^kg(\mu)+\sum_{k=0}^n\binom{n}{k}D_{t}^{k+1}g(\mu)
\\&=\sum_{k=0}^{n+1}\binom{n+1}{k}D_t^kg(\mu).
\end{align*}

Identity \eqref{eq:main3b} follows from the famous M\"obius inversion
formula \cite{Rota1964}.
\end{proof}

\section{Corners of partitions} \label{sec:main}

Now we introduce the concept of corners in a partition.  For a
partition $\lambda$, the \emph{outer corners} (see 
\cite{bandlow, hanxiong}) are the boxes which can be removed 
in such a way that after removal the resulting diagram is still a Young diagram.
The coordinates of outer corners are denoted by
$(\alpha_1,\beta_1),\ldots,(\alpha_{m},\beta_{m})$  such that
$\alpha_1>\alpha_2>\cdots
>\alpha_m$. Let $y_j:=\beta_j-\alpha_j$ ($1\leq j \leq m$) be the contents of outer corners.  Set $\alpha_{m+1}=\beta_0=0$. The
\emph{inner corners} of $\lambda$ are defined to be the boxes whose coordinates
are
$(\alpha_1,\beta_0),(\alpha_2,\beta_1),\ldots,(\alpha_{m+1},\beta_{m})$.
Let $x_i=\beta_i-\alpha_{i+1}$
 be the contents of the inner corners for $0\leq i \leq m$.
 
{\it Example}. For $\lambda=(6,3,2,2)$ (see Figure $1$) we have  $m=3$, $(\alpha_1,\beta_1)=(4,2)$, $(\alpha_2,\beta_2)=(2,3)$ and $(\alpha_3,\beta_3)=(1,6)$. Therefore $(x_0,x_1,x_2,x_3)=(-4,0,2,6)$ and $(y_1,y_2,y_3)=(-2,1,5)$. Also note that $\sum_{0\leq i\leq 3}{x_i}-\sum_{1\leq j\leq
3}{y_j}=0$ and $\sum_{0\leq i\leq 3}{x_i}^{2}-\sum_{1\leq j\leq
3}{y_j}^{2}=26=2|\lambda|$.

\medskip
Let $\lambda$ be a partition and $x_i, y_j$ be the quantities associated to $\lambda$ defined above. 
Define (see \cite{hanxiong})
\begin{equation}\label{def:qk}
q_k(\lambda):=\sum_{0\leq i\leq m}{x_i}^{k}-\sum_{1\leq j\leq
m}{y_j}^{k}
\end{equation}  
for each integer $k$,
and  \begin{equation}
q_\nu(\lambda):=\prod_{k=1}^\ell q_{\nu_k}(\lambda)
\end{equation}
for  each partition $\nu=(\nu_1, \nu_2,
\ldots, \nu_\ell)$. \footnote{We need to be careful here: in general, $q_\nu(\lambda) = \prod_i q_{\nu_i}(\lambda) \ne \prod_i q_{\nu^i}(\lambda)$, which was used in Theorem~\ref{th:main} and commented on in footnote~\ref{foot:main}. Writing $q_\nu$ allows for a convenient access to $|\nu|$.}

We need the following results developed in \cite{hanxiong} (see (6.3), (6.6), Theorem 6.1 and Lemma 6.4 in \cite{hanxiong} respectively).

\begin{thm} \label{th:dqnu1} 
Let $\lambda^{i+}=\lambda\cup \{\square_i\}$ with $c_{\square_i}=x_i$ for $0\leq i\leq m$. 
\begin{enumerate}
\item For $k=0,1,2$, we have
\begin{equation*}
q_0({\lambda})=1,\quad q_1({\lambda})=0 \text{\quad and \quad}
q_2({\lambda})=2\,|\lambda|.
\end{equation*}

\item Suppose that $k$ is a nonnegative integer.  Then we have
$$
q_k({\lambda^{i+}})-q_k({\lambda})=\sum_{1\leq j
\leq{k/2}}2\binom{k}{2j}x_i^{k-2j}.
$$

\item Let $\nu$ be a partition. Then, there exists some $b_{\delta}\in \mathbb{Q}$ such that
\begin{equation*}
D\left(\frac{q_{\nu}(\lambda)}{H_{\lambda}}\right)= \sum_{|\delta|\leq
|\nu|-2}b_{\delta}\frac{q_\delta(\lambda)}{H_{\lambda}}
\end{equation*}
for every partition $\lambda$. 

\item Let $k$ be a nonnegative integer. Then there exists some
$b_{\delta}\in \mathbb{Q}$ such that
$$
\sum_{0\leq i\leq m}\frac{H_{\lambda}}{H_{\lambda^{i+}}}{x_i}^k=
\sum_{|\delta|\leq k}b_{\delta}q_\delta(\lambda)
$$  
for every partition
$\lambda.$
\end{enumerate}
\end{thm}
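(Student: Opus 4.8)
The plan is to prove the four statements in the order (2), (1), (4), (3), since each later part rests on the earlier ones. The combinatorial heart is the local analysis behind part (2). First I would record the interlacing $x_0<y_1<x_1<\cdots<y_m<x_m$ of the inner- and outer-corner contents, and observe that adding the box at the inner corner of content $x_i$ does the following to the boundary: the box of content $x_i$ becomes removable (a new outer corner of content $x_i$ appears), the inner corner of content $x_i$ disappears, and two new addable positions of contents $x_i-1$ and $x_i+1$ are created (the cells immediately below and to the right of the added box). Near the boundary of the diagram some of these may coincide with a pre-existing outer corner and cancel it, but a coincident inner/outer pair contributes $c^k-c^k=0$ to $q_k$; hence, whatever the local configuration, $q_k(\lambda^{i+})-q_k(\lambda)=(x_i+1)^k+(x_i-1)^k-2x_i^k$. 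Expanding by the binomial theorem kills the odd terms and the $j=0$ term, leaving exactly $\sum_{1\le j\le k/2}2\binom{k}{2j}x_i^{k-2j}$, which is part (2).

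Part (1) then follows quickly. The values $q_0=1$ and $q_1=0$ come from direct telescoping in the definitions $x_i=\beta_i-\alpha_{i+1}$, $y_j=\beta_j-\alpha_j$ together with $\alpha_{m+1}=\beta_0=0$. For $q_2$ I would induct on $|\lambda|$: since $q_2(\emptyset)=0$ and part (2) with $k=2$ gives $q_2(\lambda^{i+})-q_2(\lambda)=2\binom{2}{2}x_i^0=2$ for every added box, we obtain $q_2(\lambda)=2|\lambda|$.

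For part (4) I would introduce the weights $w_i:=H_\lambda/H_{\lambda^{i+}}$ and the rational function $\Psi(T):=\prod_{j=1}^m(T-y_j)\big/\prod_{l=0}^m(T-x_l)$. The key input is the product formula $w_i=\prod_{j}(x_i-y_j)\big/\prod_{l\ne i}(x_i-x_l)$ for the hook-length ratio, which exhibits $w_i$ as the residue of $\Psi$ at $T=x_i$, so that $\Psi(T)=\sum_i w_i/(T-x_i)$ and hence $T\Psi(T)=\sum_{k\ge0}\big(\sum_i w_ix_i^k\big)T^{-k}$. On the other hand, computing $\log\Psi$ term by term gives $T\Psi(T)=\exp\big(\sum_{r\ge1}q_r(\lambda)T^{-r}/r\big)$, the classical generating function expressing the complete homogeneous symmetric functions through power sums. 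Reading off the coefficient of $T^{-k}$ then writes $\sum_i w_ix_i^k$ as a $\mathbb{Q}$-combination of terms $q_\delta(\lambda)$ with $|\delta|=k$, which is (even slightly sharper than) part (4). I expect this step---in particular establishing the corner-content product formula for $H_\lambda/H_{\lambda^{i+}}$---to be the main obstacle.

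Finally, part (3) combines the previous parts. Writing $D(q_\nu/H_\lambda)(\lambda)=\frac1{H_\lambda}\sum_i w_i\big(q_\nu(\lambda^{i+})-q_\nu(\lambda)\big)$, where I have used $\sum_i w_i=1$ (equivalently \eqref{eq:DH}), I would expand $q_\nu=\prod_k q_{\nu_k}$ and invoke part (2) factor by factor: each difference $q_{\nu_k}(\lambda^{i+})-q_{\nu_k}(\lambda)$ is a polynomial in $x_i$ of degree $\nu_k-2$. Every term of the expansion is thus a product of some $q_{\nu_k}(\lambda)$ (over an index set $S$, with at least one factor off $S$ being a difference) times $x_i^p$ with $p\le\sum_{k\notin S}(\nu_k-2)$. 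Summing $\sum_i w_ix_i^p$ by part (4) and merging the resulting $q_\delta$ with the surviving factors via $q_\alpha(\lambda)\,q_\beta(\lambda)=q_{\alpha\cup\beta}(\lambda)$ produces $q$'s of total degree at most $\sum_{k\in S}\nu_k+\sum_{k\notin S}(\nu_k-2)\le|\nu|-2$, since at least one index lies off $S$. This gives the claimed bound $|\delta|\le|\nu|-2$. The only care needed here is the degree bookkeeping, which is routine once parts (2) and (4) are in hand.
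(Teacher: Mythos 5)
Your proposal is correct, but be aware that the comparison here is one-sided: the paper never proves Theorem \ref{th:dqnu1} at all --- it imports all four parts from the companion paper \cite{hanxiong} (see (6.3), (6.6), Theorem 6.1 and Lemma 6.4 there), so what you have written is a self-contained replacement for a citation, and it holds up. In part (2), your reduction to $(x_i+1)^k+(x_i-1)^k-2x_i^k$ is valid in every local configuration: at content $x_i-1$ (symmetrically $x_i+1$) the old partition can only have an outer corner or no corner there, never an inner one, so the net change is $+(x_i\pm 1)^k$ either way. Parts (1) and (3) follow as you describe; the bookkeeping in (3) works because every factor indexed outside $S$ loses at least two degrees and at least one index lies outside $S$, and the identity $\sum_i H_\lambda/H_{\lambda^{i+}}=1$ that you invoke is available either as \eqref{eq:DH} or, with no import at all, as the $k=0$ coefficient in your own expansion of $T\Psi(T)$. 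The one fact you assume rather than prove, $H_\lambda/H_{\lambda^{i+}}=\prod_j(x_i-y_j)\big/\prod_{l\neq i}(x_i-x_l)$, is indeed the crux, but it is classical: it underlies Bandlow's proof of the hook length formula \cite{bandlow} and the Kerov--Olshanski transition-measure formalism \cite{ols2}, and it is proved in \cite{hanxiong}, so citing it is legitimate rather than a gap. Your route through $T\Psi(T)=\exp\bigl(\sum_{r\geq 1}q_r(\lambda)T^{-r}/r\bigr)$ even buys something extra in part (4): the coefficients $b_\delta$ come out homogeneous (supported on $|\delta|=k$, not merely $|\delta|\leq k$) and manifestly independent of $\lambda$, which is precisely the uniformity that Theorem \ref{th:dqnu} and the admissibility arguments downstream rely on.
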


The next result is  easy to prove by Theorem \ref{th:dqnu1}(4).
\begin{thm} \label{th:h012} Let $\lambda^{i+}=\lambda\cup \{\square_i\}$ with $c_{\square_i}=x_i$ for $0\leq i\leq m$. Then,
$$
\sum_{0\leq i\leq m}\frac{H_{\lambda}}{H_{\lambda^{i+}}}{x_i}=
0
$$ 
and  
$$
\sum_{0\leq i\leq
m}\frac{H_{\lambda}}{H_{\lambda^{i+}}}{x_i}^2= |\lambda|
$$  for
every partition $\lambda.$
\end{thm}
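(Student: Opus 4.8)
The plan is to derive Theorem~\ref{th:h012} directly from Theorem~\ref{th:dqnu1}(4) by extracting the cases $k=1$ and $k=2$ and then invoking the explicit values of $q_\nu(\lambda)$ for small partitions $\nu$ computed in Theorem~\ref{th:dqnu1}(1). The key observation is that Theorem~\ref{th:dqnu1}(4) asserts that the quantity $\sum_{0\le i\le m}\frac{H_\lambda}{H_{\lambda^{i+}}}x_i^k$ is a $\mathbb{Q}$-linear combination of the $q_\delta(\lambda)$ with $|\delta|\le k$; for $k=1$ and $k=2$ there are very few such partitions $\delta$, so the right-hand side collapses to an explicit expression in $q_0,q_1,q_2$, which are themselves pinned down by part (1).

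First I would treat the case $k=1$. Here the partitions $\delta$ with $|\delta|\le 1$ are the empty partition and the single partition $(1)$, so Theorem~\ref{th:dqnu1}(4) gives $\sum_{0\le i\le m}\frac{H_\lambda}{H_{\lambda^{i+}}}x_i = b_\emptyset\, q_\emptyset(\lambda) + b_{(1)}\, q_1(\lambda)$ for some fixed rationals $b_\emptyset,b_{(1)}$ independent of $\lambda$. Using $q_1(\lambda)=0$ from part (1), and noting $q_\emptyset(\lambda)=1$ (the empty product, consistent with $q_0(\lambda)=1$), the sum equals the constant $b_\emptyset$. To fix this constant I would evaluate at a convenient partition — for instance $\lambda=\emptyset$, where $m=0$, $x_0=0$, and $H_{\lambda^{0+}}/H_\lambda$ is finite, so the left-hand side is $x_0=0$; hence $b_\emptyset=0$ and the first identity $\sum_i \frac{H_\lambda}{H_{\lambda^{i+}}}x_i=0$ follows.

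Next I would treat $k=2$ analogously. The partitions $\delta$ with $|\delta|\le 2$ are $\emptyset$, $(1)$, $(2)$ and $(1,1)$, so the right-hand side is a fixed linear combination of $q_\emptyset,q_1,q_2$ and $q_{(1,1)}=q_1^2$. Since $q_1(\lambda)=0$ kills both the $q_1$ and the $q_1^2=q_{(1,1)}$ terms, we are left with $\sum_{0\le i\le m}\frac{H_\lambda}{H_{\lambda^{i+}}}x_i^2 = a_\emptyset + a_{(2)}\,q_2(\lambda) = a_\emptyset + 2a_{(2)}|\lambda|$ using $q_2(\lambda)=2|\lambda|$ from part (1). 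This is an affine function of $|\lambda|$ with universal coefficients, and since we want to prove it equals $|\lambda|$ exactly, I would determine $a_\emptyset$ and $a_{(2)}$ by testing two partitions of different sizes: $\lambda=\emptyset$ forces $a_\emptyset=0$, and a partition of size $1$ (where $m=1$ and the two additions have contents $x_0=-1$, $x_1=1$, with $H_\lambda=1$) forces $2a_{(2)}=1$. Substituting back yields the second identity.

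The only genuinely delicate point is the bookkeeping in the small test cases — correctly reading off the inner-corner contents $x_i$ and the hook-length ratios $H_\lambda/H_{\lambda^{i+}}$ for $\lambda=\emptyset$ and for a one-box partition — together with the harmless identification $q_\emptyset=q_0=1$ and $q_{(1,1)}=q_1^2$. Everything else is forced: because part (4) guarantees a universal linear relation in the $q_\delta$ and part (1) supplies the exact values of the relevant $q_\delta$, the identities are reduced to evaluating universal rational constants at a couple of explicit partitions. I expect no real obstacle beyond this verification, which is why the paper can fairly call the result ``easy to prove by Theorem~\ref{th:dqnu1}(4).''
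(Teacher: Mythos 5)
Your proposal is correct and follows exactly the route the paper intends: the paper dismisses Theorem~\ref{th:h012} as ``easy to prove by Theorem~\ref{th:dqnu1}(4),'' and your argument fills in precisely those details, combining the universal linear expansion in the $q_\delta(\lambda)$ from part (4) with the values $q_0=1$, $q_1=0$, $q_2=2|\lambda|$ from part (1), and pinning down the universal constants by evaluating at $\emptyset$ and a one-box partition. Your bookkeeping in the test cases (contents $x_0=-1$, $x_1=1$ and hook ratios $1/2$ for the one-box partition) is also accurate, so there is no gap.
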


We obtain the following result, which is a generalization of Theorem \ref{th:dqnu1}(3).

\begin{thm}  \label{th:dqnu}
 Let $\nu$  be a given partition. Then, there exists
 some $b_{\delta^0,\delta^1,\ldots,
\delta^{t-1}}\in \mathbb{Q}$ such that
\begin{equation}\label{eq:Dnu*}
D_t\left(G_{\lambda}\prod_{i=0}^{t-1}q_{\nu^i}(\lambda^i)\right)=
\sum_{(*)}b_{\delta^0,\delta^1,\ldots,
\delta^{t-1}}G_{\lambda}\prod_{i=0}^{t-1}q_{\delta^i}(\lambda^i)
\end{equation}
for every partition $\lambda$, where $(*)$ ranges over all
partitions $\delta^0,\delta^1,\ldots,
\delta^{t-1}$ such that $\sum_{i=0}^{t-1}|\delta^i|\leq
\sum_{i=0}^{t-1}|\nu^i|-2.$ Notice that
$b_{\delta^0,\delta^1,\ldots, \delta^{t-1}}$ is independent of
$\lambda$.

Furthermore, if $r\geq \frac12\sum_{i=0}^{t-1}|\nu^i|+1,$ then
$$
D_t^r\left(G_{\lambda}\prod_{i=0}^{t-1}q_{\nu^i}(\lambda^i)\right)=0
$$ 
for every partition
$\lambda$.
\end{thm}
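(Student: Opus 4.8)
The plan is to prove Theorem~\ref{th:dqnu} by reducing the action of $D_t$ on the product $G_\lambda \prod_{i=0}^{t-1} q_{\nu^i}(\lambda^i)$ to the already-established single-quotient result of Theorem~\ref{th:dqnu1}(3), using a ``product rule'' style decomposition of the difference operator. First I would unpack the definition \eqref{eq:diffDt}: adding a $t$-hook to $\lambda$ amounts, under the Littlewood decomposition, to adding a single box to exactly one quotient $\lambda^j$ while leaving all other quotients and the $t$-core fixed. Thus the sum over $\lambda^+ \geq_t \lambda$ with $|\lambda^+/\lambda|=t$ splits as a sum over the choice of index $j$ and then over the inner corners of $\lambda^j$.

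The key computation is to isolate, for each fixed $j$, the contribution where the box is added to $\lambda^j$. Writing $\lambda^{+}$ for the partition whose $j$-th quotient is $(\lambda^j)^+$, Lemma~\ref{th:Glambda} tells us how $G_\lambda$ redistributes (its ratio is $H_{\lambda^j}/(t\,H_{(\lambda^j)^+})$), and the factors $q_{\nu^i}(\lambda^i)$ for $i\neq j$ are unchanged while $q_{\nu^j}(\lambda^j)$ changes to $q_{\nu^j}((\lambda^j)^+)$. Concretely I expect to show
\begin{equation*}
D_t\Bigl(G_\lambda \prod_{i=0}^{t-1} q_{\nu^i}(\lambda^i)\Bigr)
= \sum_{j=0}^{t-1} G_\lambda \Bigl(\prod_{i\neq j} q_{\nu^i}(\lambda^i)\Bigr)
\cdot t\, H_{\lambda^j}\, D\Bigl(\frac{q_{\nu^j}(\lambda^j)}{H_{\lambda^j}}\Bigr),
\end{equation*}
where the inner $D$ is the ordinary ($t=1$) difference operator acting on the single partition $\lambda^j$, and the prefactor $t H_{\lambda^j}$ reconstitutes $G_\lambda$ from $1/H_{\lambda^j}$. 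The $-g(\lambda)$ term of $D_t$ and the $-q_{\nu^j}/H_{\lambda^j}$ term inside each $D$ must be matched carefully; I would verify that the single $-G_\lambda\prod_i q_{\nu^i}(\lambda^i)$ is correctly apportioned across the $t$ summands using the normalization $\sum_j (1/t) = 1$ already exploited in the proof of Lemma~\ref{th:Glambda}. Once this product rule is in place, applying Theorem~\ref{th:dqnu1}(3) to each $D(q_{\nu^j}(\lambda^j)/H_{\lambda^j})$ expands it as a $\mathbb{Q}$-combination of $q_{\delta^j}(\lambda^j)/H_{\lambda^j}$ with $|\delta^j| \le |\nu^j|-2$; multiplying back by $t H_{\lambda^j}$ and the untouched factors yields exactly the claimed form \eqref{eq:Dnu*}, with the degree drop $\sum_i |\delta^i| \le \sum_i |\nu^i| - 2$ coming from the single affected index.

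The main obstacle I anticipate is the bookkeeping of the diagonal $-g(\lambda)$ term across the quotient decomposition, since $D_t$ subtracts $g(\lambda)$ only once whereas each single-quotient $D$ carries its own subtraction; getting these to telescope correctly against the $G_{\lambda^+}/G_\lambda$ ratios from Lemma~\ref{th:Glambda} is the delicate point. The final ``furthermore'' claim then follows by a clean induction on $r$: the total degree $N(\nu^\bullet) := \sum_{i=0}^{t-1}|\nu^i|$ strictly decreases by at least $2$ under each application of $D_t$, and since $q_0(\lambda^i)=1$ while $q_1(\lambda^i)=0$ by Theorem~\ref{th:dqnu1}(1), any term reaching total degree $0$ or $1$ either is constant (killed by a further $D_t$ since $D_t(G_\lambda)=0$ by Lemma~\ref{th:Glambda}) or already vanishes. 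Hence after $r \ge \tfrac12 N(\nu^\bullet) + 1$ applications every surviving multi-index has been driven to a vanishing $q_\delta$, giving $D_t^r(\,\cdot\,)=0$.
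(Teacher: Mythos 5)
Your proposal is correct and follows essentially the same route as the paper's proof: decompose the $t$-hook additions by quotient index $j$, use the ratio $G_{\lambda^+}/G_\lambda = H_{\lambda^j}/(t H_{(\lambda^j)^+})$ together with $\sum_{(\lambda^j)^+} H_{\lambda^j}/H_{(\lambda^j)^+}=1$ to match the subtracted term and reduce to $H_{\lambda^j}\, D\bigl(q_{\nu^j}(\lambda^j)/H_{\lambda^j}\bigr)$, expand via Theorem~\ref{th:dqnu1}(3), and iterate, using $q_1=0$, $q_0=1$ and $D_t(G_\lambda)=0$ (Lemma~\ref{th:Glambda}) for the vanishing claim. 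The only flaw is the constant in your displayed product rule, where the correct prefactor is $\tfrac1t H_{\lambda^j}$ rather than $t\, H_{\lambda^j}$; this slip is harmless here because all constants are absorbed into the rational coefficients $b_{\delta^0,\ldots,\delta^{t-1}}$.
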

\begin{proof}
We have
\begin{align*}
\ & \ \ \ \ D_t \left(G_{\lambda}\prod_{i=0}^{t-1}q_{\nu^i}(\lambda^i)\right)\\
&=\sum\limits_{\substack{\lambda^{+}\geq_t\lambda\\
|\lambda^{+}/\lambda|=t}}G_{\lambda^+}\prod_{i=0}^{t-1}q_{\nu^i}\bigl((\lambda^+)^i\bigr)-G_{\lambda}\prod_{i=0}^{t-1}q_{\nu^i}(\lambda^i)\\
&=G_{\lambda}\sum\limits_{\substack{\lambda^{+}\geq_t\lambda\\
|\lambda^{+}/\lambda|=t}}\frac{G_{\lambda^+}}{G_{\lambda}}\left(\prod_{i=0}^{t-1}q_{\nu^i}((\lambda^+)^i)-\prod_{i=0}^{t-1}q_{\nu^i}(\lambda^i)\right)\\
 &=\frac1tG_{\lambda}\sum_{j=0}^{t-1}  \prod\limits_{\substack{0\leq i\leq t-1\\i\neq j}}q_{\nu^i}(\lambda^i)   \sum\limits_{\substack{
|(\lambda^j)^+/\lambda^j|=1}}\frac{H_{\lambda^j}}{H_{(\lambda^j)^+}}\left(q_{\nu^j}((\lambda^j)^+)-q_{\nu^j}(\lambda^j)\right)\\
&=\frac1tG_{\lambda}\sum_{j=0}^{t-1}  \prod\limits_{\substack{0\leq
i\leq t-1\\i\neq j}}q_{\nu^i}(\lambda^i)
 H_{\lambda^j}D\left(\frac{q_{\nu^j}(\lambda^j)}{H_{\lambda^j}}\right) \\
&=
\sum_{(*)}b_{\delta^0,\delta^1,\ldots,
\delta^{t-1}}G_{\lambda}\prod_{i=0}^{t-1}q_{\delta^i}(\lambda^i)
\end{align*}
for some $b_{\delta^0,\delta^1,\ldots, \delta^{t-1}}\in\mathbb{Q}$, where $(*)$
ranges over all partitions $\delta^0,\delta^1,\ldots,
\delta^{t-1}$ such that
$\sum_{i=0}^{t-1}|\delta^i|\leq \sum_{i=0}^{t-1}|\nu^i|-2.$  The
last equality is due to Theorem \ref{th:dqnu1}(3).
\end{proof}

\begin{defi}
Let $\mu$ be a $t$-core partition.
A function $g$ of partitions  is called {\it $\mu$-admissible}, if
there exist some functions of partitions $\eta_s$ (here $\eta_s$   depends on $t, g, i, \mu$) such that 
$g(\lambda^+)-g(\lambda)=\sum_{s=0}^{s'}\eta_s(\lambda)c_{\square_i}^s$
is a polynomial of $c_{\square_i}$  for every pair of partitions
$$ 
\lambda=(\mu;
\lambda^0,\lambda^1,\ldots,\lambda^{i-1},\lambda^i,\lambda^{i+1},\ldots,\lambda^{t-1})
$$
and
$$
\lambda^+=(\mu;
\lambda^0,\lambda^1,\ldots,\lambda^{i-1},\lambda^{i}\cup\{\square_i\},\lambda^{i+1},\ldots,\lambda^{t-1}),
$$
where each coefficient $\eta_s(\lambda)$ is a linear combination of some
$\prod_{j=0}^{t-1}q_{\tau^j}(\lambda^j) $ for some partition $\tau$,\footnote{\label{foot:main}We really need here a $t$-tuple of partitions. 
We decide to reuse the quotient notation for this purpose. Different $\tau$ with the same $t$-core would give the same tuple, but the surjectivity is inconsequential here. } and for every partition $\lambda$ with $\lambda_{\tcore}=\mu$.
\end{defi}

By Theorem \ref{th:dqnu1}(2), $\lambda \mapsto \prod_{i=0}^{t-1}q_{\nu^i}(\lambda^i)$ is $\mu$-admissible for any partition $\nu$ and any $t$-core partition $\mu$.
As can be seen in Section \ref{sec:ProofMain}, the following two functions of partitions
$$
g(\lambda)=\sum\limits_{\substack{\square\in \lambda\\
h_{\square}\equiv \pm j (\modsymb t )}}h_{\square}^{2k}
$$
and
$$
g(\lambda)=\sum\limits_{\substack{\square\in \lambda\\
c_{\square}\equiv j  (\modsymb t )}}c_{\square}^{k}
$$ 
are $\mu$-admissible for 
any $t$-core partition $\mu$,  any nonnegative integer $k$, and $0\leq j\leq t-1$.
This is a consequence of  Lemmas
\ref{th:contentadd} and  \ref{th:hookdiff}.

\begin{thm} \label{th:main}
Let $t$ be a given integer, $\nu$ be a partition, and $\mu$ be a $t$-core partition.
Suppose that $g_1, g_2, \ldots, g_v$ are $\mu$-admissible functions of
partitions. 
Then, there exists some
$r\in \mathbb{N}$ such that
$$
D_t^r\left(G_\lambda \prod_{u=1}^{v}g_u(\lambda)\prod_{i=0}^{t-1}q_{\nu^i}(\lambda^i) \right)=0
$$
for every partition $\lambda$ with $\lambda_\tcore=\mu.$
Furthermore, by Theorem \ref{th:main3},
\begin{equation}
P(n)=\sum\limits_{\substack{\lambda_\tcore=\mu\\
|\lambda/\mu|=nt }} F_{\lambda/\mu} G_\lambda\, \prod_{u=1}^{v}g_u(\lambda)
\end{equation}
is a polynomial in $n$. \end{thm}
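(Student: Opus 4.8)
The plan is to establish the vanishing statement $D_t^r\bigl(G_\lambda\prod_{u}g_u(\lambda)\prod_i q_{\nu^i}(\lambda^i)\bigr)=0$ for a single $r$ that is independent of $\lambda$; the polynomiality of $P(n)$ then follows immediately from Theorem~\ref{th:main3}, after noting that for the $t$-core $\mu$ the conditions $\lambda_\tcore=\mu$ and $\lambda\geq_t\mu$ are equivalent. I would work inside the class $\mathcal{A}$ of all $\mathbb{Q}$-linear combinations of \emph{monomials}
$$
M_{S,\tau}(\lambda)=G_\lambda\prod_{u\in S}g_u(\lambda)\prod_{i=0}^{t-1}q_{\tau^i}(\lambda^i),
$$
indexed by a subset $S\subseteq\{1,\dots,v\}$ and a $t$-tuple $(\tau^0,\dots,\tau^{t-1})$, and assign to each such monomial the complexity $(|S|,\sum_i|\tau^i|)$, ordered lexicographically with $|S|$ as the primary component. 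The whole argument rests on one claim: $D_t$ sends $\mathcal{A}$ into itself and strictly lowers this lexicographic complexity for every monomial.

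To compute $D_t M_{S,\tau}$, I would proceed exactly as in the proof of Theorem~\ref{th:dqnu}. Using $D_t(G_\lambda)=0$ (Lemma~\ref{th:Glambda}) we rewrite $D_t(G_\lambda\Phi)=\sum_{\lambda^+}G_{\lambda^+}\bigl(\Phi(\lambda^+)-\Phi(\lambda)\bigr)$, split the sum according to which quotient $j$ receives the added box, and use $G_{\lambda^+}/G_\lambda=H_{\lambda^j}/\bigl(tH_{(\lambda^j)^+}\bigr)$. The essential move is to re-express every factor evaluated at $\lambda^+$ back in terms of $\lambda$: for an admissible $g_u$ this is precisely the defining identity $g_u(\lambda^+)-g_u(\lambda)=\sum_s\eta_s(\lambda)c_{\square_j}^{\,s}$ with $\eta_s\in\mathcal{A}$, and for $q_{\nu^j}$ it is Theorem~\ref{th:dqnu1}(2). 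After this re-expansion the inner summand over the boxes of $\lambda^j$ becomes a polynomial in the content $c_{\square_j}$ of the added box with coefficients in $\mathcal{A}$, so summing over corners and invoking Theorem~\ref{th:dqnu1}(4) to replace $\sum_l \tfrac{H_{\lambda^j}}{H_{(\lambda^j)^+}}x_{j,l}^{\,p}$ by a linear combination of the $q_\delta(\lambda^j)$ lands the result back in $\mathcal{A}$.

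For the complexity drop I would expand the product difference so that each surviving term differences at least one factor. If some $g_u$ gets differenced, admissibility replaces it by $q$'s, so $|S|$ strictly decreases (Case A). If instead only $q$-factors are differenced while all $g_u$ retain their base value $g_u(\lambda)$, that contribution factors as $\prod_u g_u(\lambda)$ times $D_t\bigl(G_\lambda\prod_i q_{\nu^i}(\lambda^i)\bigr)$, which by Theorem~\ref{th:dqnu} preserves $|S|=v$ while lowering the total $q$-degree by at least $2$ (Case B). Either way the lexicographic complexity strictly decreases. To extract a \emph{uniform} $r$, I would observe that $|S|$ can only shrink (differences of admissible functions create no new $g_u$'s), hence drops at most $v$ times; each such drop raises the $q$-degree by at most a constant $B$ depending only on the $g_u,\nu,t,\mu$ (through the admissibility bound $s'$ and Theorem~\ref{th:dqnu1}(4)), whereas every degree-preserving-$|S|$ step lowers the degree by at least $2$. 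Thus the degree stays bounded by a $\lambda$-independent constant throughout the iteration, the number of applications needed to reach the floor is bounded by some $r$ independent of $\lambda$, and at that floor ($|S|=0$ with total degree $\le 1$) the terms vanish because $q_1=0$, $q_0=1$, and $D_t(G_\lambda)=0$.

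The step I expect to be the main obstacle is the \emph{shifted-argument} issue: after a single application of $D_t$ the factors are evaluated at $\lambda^+$ rather than at $\lambda$, so the corner-summation identity Theorem~\ref{th:dqnu1}(4) cannot be applied to them directly. The re-expansion described above is exactly what resolves this, and it is the reason the notion of $\mu$-admissibility is formulated so that differences are polynomials in the added content with $q$-valued coefficients. The second delicate point, easy to overlook, is that the uniformity of $r$ requires controlling the degree growth across \emph{all} iterations and not merely one step; this is secured by the fact that $|S|$ can decrease at most $v$ times, which caps the cumulative degree increase.
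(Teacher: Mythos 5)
Your proposal is correct and follows essentially the same route as the paper's proof: the same decomposition of $D_t\bigl(G_\lambda\prod_u g_u\prod_i q_{\nu^i}\bigr)$ according to which factors are differenced (the paper's $A\cdot\Delta B+B\cdot\Delta A+\Delta A\cdot\Delta B$ split), the same use of Lemma~\ref{th:Glambda}, admissibility, Theorem~\ref{th:dqnu1}(2),(4) and Theorem~\ref{th:dqnu}, and the same descent structure. Your explicit lexicographic complexity $(|S|,\sum_i|\tau^i|)$ and the quantitative bound on degree growth are just a more detailed rendering of what the paper compresses into ``by Theorem~\ref{th:dqnu} and induction,'' with the induction taken on $v$.
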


\begin{proof}[Proof of Theorem \ref{th:main}.]
Notice that this theorem is true for $v=0$ by Theorem  \ref{th:dqnu}.   We will prove it by induction. First, we have
\begin{align*}
&D_t\left(G_\lambda \prod_{u=1}^{v}g_u(\lambda)\prod_{i=0}^{t-1}q_{\nu^i}(\lambda^i)  \right)  \\ 
=\ &G_\lambda\sum\limits_{\substack{\lambda^{+}\geq_t\lambda\\
|\lambda^{+}/\lambda|=t}}\frac{G_{\lambda^+}}{G_\lambda} \left(
\prod_{u=1}^{v}g_u(\lambda^+)\prod_{i=0}^{t-1}q_{\nu^i}((\lambda^+)^i)-\prod_{u=1}^{v}g_u(\lambda)\prod_{i=0}^{t-1}q_{\nu^i}(\lambda^i)
\right)\\
=\ &G_\lambda\sum\limits_{\substack{\lambda^{+}\geq_t\lambda\\
|\lambda^{+}/\lambda|=t}}\frac{G_{\lambda^+}}{G_\lambda} \bigl(
A\cdot \Delta B + B\cdot  \Delta A + \Delta A\cdot \Delta B
\bigr),
\end{align*}
where
\begin{align*}
	A&= \prod_{u=1}^{v}g_u(\lambda),\\
	\Delta A&= \prod_{u=1}^{v}g_u(\lambda^+)-\prod_{u=1}^{v}g_u(\lambda),\\
	B&= \prod_{i=0}^{t-1}q_{\nu^i}(\lambda^i),\\
	\Delta B&= 
\prod_{i=0}^{t-1}q_{\nu^i}((\lambda^+)^i)-\prod_{i=0}^{t-1}q_{\nu^i}(\lambda^i).
\end{align*}
Next,
\begin{align*}
	&G_\lambda\sum\limits_{\substack{\lambda^{+}\geq_t\lambda\\
|\lambda^{+}/\lambda|=t}}\frac{G_{\lambda^+}}{G_\lambda} \bigl(
A\cdot \Delta B
\bigr)\\
&= \frac1t G_{\lambda} \prod_{u=1}^{v}g_u(\lambda) \sum_{j=0}^{t-1} \Biggl(\prod\limits_{\substack{0\leq i\leq t-1\\i\neq j}}q_{\nu^i}(\lambda^i)\Biggr)  H_{\lambda^j}D\Biggl(\frac{q_{\nu^j}(\lambda^j)}{H_{\lambda^j}}\Biggr), \\
&G_\lambda\sum\limits_{\substack{\lambda^{+}\geq_t\lambda\\
|\lambda^{+}/\lambda|=t}}\frac{G_{\lambda^+}}{G_\lambda} \bigl(
B\cdot \Delta A
\bigr)\\
&=\frac1t G_{\lambda} \prod_{i=0}^{t-1}q_{\nu^i}(\lambda^i) \sum_{j=0}^{t-1} \sum\limits_{\substack{ |(\lambda^j)^+/\lambda^j|=1}}\frac{H_{\lambda^j}}{H_{(\lambda^j)^+}} \\ 
&\qquad\Biggl( \prod_{u=1}^{v}g_u\bigl((\mu; \lambda^0,\ldots,(\lambda^{j})^+,\ldots,\lambda^{t-1})\bigr)-\prod_{u=1}^{v}g_u(\lambda)\Biggr), \\
&G_\lambda\sum\limits_{\substack{\lambda^{+}\geq_t\lambda\\
|\lambda^{+}/\lambda|=t}}\frac{G_{\lambda^+}}{G_\lambda} \bigl(
\Delta A \cdot \Delta B
\bigr)\\
&=\frac1tG_{\lambda}\sum_{j=0}^{t-1}     \sum\limits_{\substack{ |(\lambda^j)^+/\lambda^j|=1}}\frac{H_{\lambda^j}}{H_{(\lambda^j)^+}}\Biggl(\prod\limits_{\substack{0\leq i\leq t-1\\i\neq j}}q_{\nu^i}(\lambda^i)\Biggr)\Biggl(q_{\nu^j}((\lambda^j)^+)- q_{\nu^j}(\lambda^j)\Biggr) \\ 
& \qquad\Biggl( \prod_{u=1}^{v}g_u\bigl((\mu; \lambda^0,\ldots,(\lambda^{j})^+,\ldots,\lambda^{t-1})\bigr)-\prod_{u=1}^{v}g_u(\lambda)\Biggr).
\end{align*}

By Theorems \ref{th:dqnu1} and \ref{th:dqnu} each of the last three terms could
be written as a linear combination of some $G_\lambda
\prod_{{u'}=1}^{v'}{g'}_{u'}(\lambda)\prod_{i=0}^{t-1}{q}_{(\nu')^i}(\lambda^i)$
for some $\mu$-admissible functions ${g'}_{u'}$, where either $v'=v$ and simultaneously $\sum_{i=0}^{t-1}|(\nu')^i|\leq \sum_{i=0}^{t-1}|{\nu}^i|-2$, or $v'<v$.
Then, by Theorem \ref{th:dqnu} and induction we prove the claim.
\end{proof}

\section{Further properties of Littlewood decomposition} \label{sec:bidi} 
In this section, we always assume that $\mu$ is a given $t$-core partition
with 01-sequence $z(\mu)=(z_{\mu,j})_{j\in \mathbb{Z}}$.
For $0 \leq i\leq t-1$, we have $\mu^i=\emptyset$. Thus, $\mu^i$ has a 01-sequence equal to
$
\cdots0000\DASH11111\cdots,
$ 
which means that $z_{\mu^i,j}=0$ for $j<0$ and $z_{\mu^i,j}=1$ for $j\geq 0$.
Let 
$$
b_i:=b_i(\mu)=\min\{j\in \mathbb{Z}:j\equiv  i(\modsymb t),\
z_{\mu,j}=1\}.
$$
Then, there exist some $d_i\in \mathbb{Z}$ such that $b_i=td_i+i$ for
$0\leq i\leq t-1$.  Therefore, $z_{\mu,tj+i}=0$ for $j<d_i$ and $z_{\mu,tj+i}=1$ for $j\geq d_i$. 

\medskip

{\it Example}. The 01-sequence of the $3$-core partition $(5,3,1,1)$ is 
$$
\cdots001001\DASH1011011\cdots.
$$
Therefore,  $b_0=0$, $b_1=7$, $b_2=-4$, $d_0=0$, $d_1=2$, and $d_2=-2$. Notice that $d_0+d_1+d_2=0$.

\medskip

We define
$$B_k=\{ (i,i+k):0\leq i \leq t-1-k \}\bigcup \{ (i,i+t-k):0\leq i\leq k-1 \},$$ 
viewed as multisets for $1\leq k\leq t-1$ (\emph{i.e.},~if $k$ equals $t-k$, we keep two copies). 
Then we have
\begin{equation}\label{eqX2:Bk1}
\sum_{(i,j)\in B_k}(j-i)^2=tk(t-k).
\end{equation}

\begin{lem}\label{th:ci1}
Suppose that $\mu$ is a given $t$-core partition.
Let 
$b_i$ and $d_i$ be defined as above.
Then
$$
\sum_{i=0}^{t-1}d_i=0.
$$
\end{lem}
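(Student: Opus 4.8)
The plan is to exploit the balance condition that defines the index $0$ of the $01$-sequence, namely $\#\{z_{\mu,j}=0 : j\geq 0\} = \#\{z_{\mu,j}=1 : j<0\}$, and to count these two quantities residue class by residue class modulo $t$. First I would fix a residue $i$ with $0\leq i\leq t-1$ and examine the subsequence $(z_{\mu,tj+i})_{j\in\mathbb{Z}}$. By the discussion preceding the lemma we know exactly what this subsequence looks like: $z_{\mu,tj+i}=0$ for $j<d_i$ and $z_{\mu,tj+i}=1$ for $j\geq d_i$. This is precisely the statement that the $t$-core $\mu$ has empty $t$-quotients $\mu^i=\emptyset$, each shifted by the amount $d_i$.

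The key computation is then to express the two counts appearing in the balance condition in terms of the $d_i$. For the indices $tj+i\geq 0$, that is $j\geq \lceil -i/t\rceil = 0$ (since $0\leq i\leq t-1$), the value $z_{\mu,tj+i}$ equals $0$ exactly when $0\leq j<d_i$, contributing $\max(d_i,0)$ zeros in residue class $i$ with nonnegative index. Symmetrically, for the indices $tj+i<0$, that is $j<0$, the value $z_{\mu,tj+i}$ equals $1$ exactly when $d_i\leq j<0$, contributing $\max(-d_i,0)$ ones in residue class $i$ with negative index. Summing the balance condition over all residues $i$ gives
$$
\sum_{i=0}^{t-1}\max(d_i,0)=\sum_{i=0}^{t-1}\max(-d_i,0).
$$
Since $\max(d_i,0)-\max(-d_i,0)=d_i$ for every $i$, subtracting the right-hand side from the left yields $\sum_{i=0}^{t-1}d_i=0$, which is the claim.

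Alternatively, and perhaps more cleanly, I would argue directly from the identity $\#\{z_j=0:j\geq 0\}=\#\{z_j=1:j<0\}$ by noting that both sides decompose as disjoint unions over the $t$ residue classes, and in class $i$ the respective contributions are $\#\{j\geq 0:j<d_i\}$ and $\#\{j<0:j\geq d_i\}$. Writing these as $\max(d_i,0)$ and $\max(-d_i,0)$ and using $\max(d_i,0)-\max(-d_i,0)=d_i$ delivers the result upon summation.

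The main obstacle I anticipate is purely bookkeeping: getting the boundary indices right so that the ``$\geq 0$ versus $<0$'' split of the global index translates correctly into the ``$\geq d_i$ versus $<d_i$'' split of each subsequence, and confirming that the floor/ceiling issues vanish because the residue $i$ lies in $\{0,1,\dots,t-1\}$, which makes $tj+i\geq 0 \iff j\geq 0$. Once the counts $\max(\pm d_i,0)$ are correctly identified, the conclusion is immediate from the elementary identity $\max(d_i,0)-\max(-d_i,0)=d_i$, so there is no substantive difficulty beyond this careful index tracking.
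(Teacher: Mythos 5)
Your proof is correct and follows essentially the same route as the paper: both exploit the fact that the $t$-core condition forces $z_{\mu,tj+i}=0$ for $j<d_i$ and $z_{\mu,tj+i}=1$ for $j\geq d_i$, then decompose the balance condition $\#\{z_{\mu,j}=0:j\geq 0\}=\#\{z_{\mu,j}=1:j<0\}$ residue class by residue class to conclude $\sum_{i=0}^{t-1}d_i=0$. The paper merely compresses the $\max(d_i,0)$ versus $\max(-d_i,0)$ bookkeeping that you spell out into a single displayed equation.
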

\begin{proof}
Let $0\leq i \leq t-1$. First, the fact that $\mu$ is a $t$-core partition implies that $z_{\mu,tj+i}=0$ for $j<d_i$ and $z_{\mu,tj+i}=1$ for $j\geq d_i$.
By the definition of 01-sequence we have
 $\#\{z_{\mu,j} =0: j \geq 0\}= \#\{z_{\mu,j} =1: j< 0\}$.
Thus, 
$$\sum_{i=0}^{t-1}d_i=\#\{z_{\mu,j} =0: j \geq 0\}- \#\{z_{\mu,j} =1: j< 0\}=0.$$
\end{proof}

For every partition $\lambda$ and  $0\leq k \leq t-1$,
recall that $\lambda(k)=\{h_{\square}\equiv k (\modsymb t):\square\in\lambda
\}$, viewed as a multiset.
\begin{lem}\label{th:cibimu}
Suppose that $\mu$ is a given $t$-core partition and $1\leq k\leq
t-1$. Then, $|\mu(0)|=0$ and
\begin{align*}
|\mu(k)|+|\mu(t-k)|=\sum\limits_{(i,j)\in B_k}\binom{d_i-d_j}{2}.
\end{align*}
\end{lem}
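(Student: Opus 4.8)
The plan is to establish the identity $|\mu(k)|+|\mu(t-k)|=\sum_{(i,j)\in B_k}\binom{d_i-d_j}{2}$ by counting the hook lengths of $\mu$ directly in the $01$-sequence using Lemma~\ref{th:01sequencehookcontent}. Recall that each box of $\mu$ corresponds to a pair $(a,b)$ with $a<b$, $z_{\mu,a}=1$, $z_{\mu,b}=0$, and its hook length equals $b-a$. So $|\mu(k)|$ counts such pairs with $b-a\equiv k\pmod t$, and the combined quantity $|\mu(k)|+|\mu(t-k)|$ counts pairs whose hook length is congruent to $k$ or $t-k$ modulo $t$. The first task is to show $|\mu(0)|=0$, which is immediate: since $\mu$ is a $t$-core, there is no $i$ with $z_{\mu,i}=1$ and $z_{\mu,i+t}=0$, hence no hook length divisible by $t$, so no box contributes to $\mu(0)$.

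For the main identity, I would organise the count according to the residues modulo $t$ of the two indices $a$ and $b$. Write $a=ta'+i$ and $b=tb'+j$ with $0\le i,j\le t-1$. The condition $z_{\mu,a}=1$ means, by the description $z_{\mu,t\ell+i}=1\iff \ell\ge d_i$, that $a'\ge d_i$; similarly $z_{\mu,b}=0$ means $b'<d_j$, i.e. $b'\le d_j-1$. The hook length $b-a=t(b'-a')+(j-i)$ is congruent to $j-i$ modulo $t$, so demanding $b-a\equiv k$ or $t-k\pmod t$ forces the residue pair $(i,j)$ (taken in the right order so that $a<b$) to satisfy $j-i\equiv \pm k\pmod t$ — which is exactly the defining condition of the multiset $B_k$. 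So I would partition all contributing boxes by their residue pair and show that the residue pairs appearing are precisely those indexed by $B_k$, with the two families $\{(i,i+k)\}$ and $\{(i,i+t-k)\}$ accounting for the two congruence classes.

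The key step is then to count, for a fixed residue pair $(i,j)\in B_k$, the number of boxes with those residues, i.e. the number of pairs $(a',b')$ with $a'\ge d_i$, $b'\le d_j-1$, and $a<b$. The inequality $a<b$ together with the residue offset determines the range; since $j-i$ has a fixed sign in each of the two defining families of $B_k$, the constraint $a<b$ translates into $a'\le b'$ or $a'<b'$ depending on the sign, and combined with $d_i\le a'$ and $b'\le d_j-1$ this yields a triangular count of the form $\binom{d_i-d_j}{2}$ (or $0$ when $d_i\le d_j$, consistent with the binomial convention). I would verify that the offset $j-i$ and the requirement $a<b$ conspire exactly so that the number of admissible pairs is $\binom{d_i-d_j}{2}$ for each $(i,j)\in B_k$, and summing over $B_k$ gives the claimed total.

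The main obstacle I anticipate is bookkeeping the direction of the inequality $a<b$ across the two sub-families of $B_k$ and matching it correctly with the sign of $j-i$, so that in both cases the count collapses to the same binomial coefficient $\binom{d_i-d_j}{2}$; a careless treatment could produce $\binom{d_j-d_i}{2}$ or an off-by-one error in the triangular count. Checking the running example (the $3$-core $(5,3,1,1)$ with $d_0=0,d_1=2,d_2=-2$) against the formula would be a useful consistency check before committing to the signs.
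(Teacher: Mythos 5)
Your setup coincides with the paper's own proof: identify the boxes of $\mu$ with pairs $(a,b)$, $a<b$, $z_{\mu,a}=1$, $z_{\mu,b}=0$ via Lemma~\ref{th:01sequencehookcontent}, sort them by the residues of $a$ and $b$ modulo $t$, and reduce each class to a triangular count using $z_{\mu,t\ell+i}=1$ if and only if $\ell\ge d_i$; the argument for $|\mu(0)|=0$ is also fine. The genuine gap is your parenthetical claim that the count for $(i,j)\in B_k$ is ``$0$ when $d_i\le d_j$, consistent with the binomial convention.'' The binomial in the statement must be read as the polynomial $\binom{x}{2}=\frac{x(x-1)}{2}$, so that for $d_i<d_j$ one has $\binom{d_i-d_j}{2}=\binom{d_j-d_i+1}{2}>0$; it does not vanish. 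The reason it cannot vanish is that each $(i,j)\in B_k$ (which has $i<j$ by definition) governs \emph{two} families of boxes: those whose index pair has residues $(i,j)$ in that order, counted by $\#\{(a',b'): d_i\le a'\le b'\le d_j-1\}=\binom{d_j-d_i+1}{2}$ (here $a<b$ translates to the weak inequality $a'\le b'$), and those with residues in the reversed order $(j,i)$, counted by $\#\{(a',b'): d_j\le a'<b'\le d_i-1\}=\binom{d_i-d_j}{2}$ (strict inequality). At most one of the two is nonzero, and the identity $\binom{m}{2}=\binom{1-m}{2}$ is exactly what allows both cases to be written uniformly as $\binom{d_i-d_j}{2}$; this is how the paper's proof concludes. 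Note also that the weak/strict dichotomy is between the two \emph{orderings} of each residue pair, not between the two defining families $\{(i,i+k)\}$ and $\{(i,i+t-k)\}$ of $B_k$ as you suggest: both families have $j-i>0$ and behave identically.

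Your own proposed sanity check exposes the error. For $\mu=(5,3,1,1)$, $t=3$, $k=1$ we have $(d_0,d_1,d_2)=(0,2,-2)$ and $B_1=\{(0,1),(1,2),(0,2)\}$. The true counts are $3$, $6$, $1$: for $(0,1)$ the three boxes come from the pairs $(a,b)=(0,1),(0,4),(3,4)$, whose residues are in the $B_k$ order even though $d_0<d_1$; for $(1,2)$ all six boxes have residues in the reversed order $(2,1)$. The total $3+6+1=10$ equals $|\mu(1)|+|\mu(2)|=|\mu|$, as it must be for a $3$-core of size $10$ (every hook length is $\equiv 1$ or $2$ modulo $3$). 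Under your convention the pair $(0,1)$ would contribute $0$, giving $7\ne 10$. So the step ``the count collapses to $\binom{d_i-d_j}{2}$, or $0$ when $d_i\le d_j$'' fails as stated. Once you count both orderings of each residue pair and interpret the binomial polynomially --- equivalently, as $\binom{d_i-d_j}{2}$ when $d_i>d_j$ and as $\binom{d_j-d_i+1}{2}$ when $d_i\le d_j$ --- your outline becomes precisely the paper's proof.
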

\begin{proof}
First, $|\mu(0)|=0$ since $\mu$ is a $t$-core partition.  By Lemma \ref{th:01sequencehookcontent}  we have
$$ |\mu(k)|=\#\{(i',j'):i'<j',j'-i'\equiv k (\modsymb t),\ z_{\mu,i'}=1,\
z_{\mu,j'}=0\}.
$$
Now we just need to consider the right hand side of the above
identity. Notice that for $0\leq i\leq t-1$ we have $z_{\mu,tj+i}=0$ for $j<d_i$ and $z_{\mu,tj+i}=1$ for $j\geq d_i$. Then, for each $(i,j)\in B_k$, we have
\begin{align*} \ &\#\{(i',j'):i'\equiv i(\modsymb t),j'\equiv j(\modsymb t),i'<j',\ z_{\mu,i'}=1,\ z_{\mu,j'}=0
\}\\ &+\#\{(i',j'):i'\equiv j(\modsymb t),j'\equiv i(\modsymb t),i'<j',\ z_{\mu,i'}=1,\ z_{\mu,j'}=0\}\\
&=1+2+\cdots+(d_i-d_j-1)=\binom{d_i-d_j}{2}=\binom{d_j-d_i+1}{2}
\end{align*} whenever $d_i> d_j$ or $d_i\leq d_j$. This means that
\begin{align*}
|\mu(k)|+|\mu(t-k)|=\sum\limits_{(i,j)\in B_k}\binom{d_i-d_j}{2}.
\end{align*}
\end{proof}

By Lemmas \ref{th:ci1} and \ref{th:cibimu} we obtain
\begin{align}
	\sum_{(i,j)\in B_k}(2j-2i)(d_i-d_j)&=t\sum_{(i,j)\in B_k}(d_i-d_j),\label{eq:Bk2}\\
	t\sum_{0\leq i\leq t-1}d_i^2&=\sum_{0\leq i < j\leq t-1}(d_i-d_j)^2,\label{eq:Bk3} \\
	-2\sum_{0\leq i\leq t-1}id_i&=\sum_{0\leq i < j\leq
t-1}(d_i-d_j),\label{eq:Bk4}
\end{align}
and
\begin{align} |\mu| &=\sum_{0\leq i< j\leq
t-1}\binom{d_i-d_j}{2}\label{eq:card:mu}\\ 
&= \frac{t}{2}\sum_{0\leq
i\leq t-1}d_i^2+\sum_{0\leq i\leq t-1}id_i \nonumber\\ 
&=\frac{1}{2t^2}\sum_{0\leq i< j\leq
t-1}\left((b_i-b_j)^2-(i-j)^2\right).\nonumber
\end{align}

\begin{lem}\label{th:lambdammum}
 Suppose that $\mu$ is a given $t$-core partition and $0\leq k\leq t-1$. Let
$\lambda$ be a partition with $\lambda_\tcore=\mu$ and
$|\lambda|=|\mu|+nt$. Then,
$$
|\lambda(k)|+|\lambda(t-k)|-|\mu(k)|-|\mu(t-k)|=2n.
$$
\end{lem}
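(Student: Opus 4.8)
The plan is to prove the identity by induction on $n$, the number of $t$-hooks that must be added to the $t$-core $\mu$ to build $\lambda$ (equivalently $n=\sum_{i=0}^{t-1}|\lambda^i|$, by the size property of the Littlewood decomposition). The base case $n=0$ is $\lambda=\mu$, where both sides vanish. For the inductive step it suffices to show that adding a single $t$-hook to a partition increases the quantity $|\lambda(k)|+|\lambda(t-k)|$ by exactly $2$, for each $1\le k\le t-1$; the case $k=0$ is treated separately below. Granting this, after $n$ hook additions the left-hand side has grown by exactly $2n$, as claimed.

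The key computation is local, carried out on the $01$-sequence. By Lemma~\ref{th:01sequencehookcontent}, $|\lambda(k)|$ equals the number of pairs $(p,q)$ with $p<q$, $z_{\lambda,p}=1$, $z_{\lambda,q}=0$ and $q-p\equiv k\pmod{t}$, so that $|\lambda(k)|+|\lambda(t-k)|$ counts such pairs with $q-p\equiv\pm k\pmod{t}$. Adding a $t$-hook means choosing $s$ with $z_s=0$, $z_{s+t}=1$ and switching these two values to $1$ and $0$. Since only the entries at $s$ and $s+t$ change, I would track the four families of pairs that can be created or destroyed: those with a $0$-endpoint at $s$ (destroyed), those with a new $1$-endpoint at $s$ (created), those with a new $0$-endpoint at $s+t$ (created), and those with a $1$-endpoint at $s+t$ (destroyed); the single new pair $(s,s+t)$ has hook length $t\equiv 0$ and is irrelevant for $1\le k\le t-1$. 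Writing the net change as a difference of partial counts of $1$'s lying below, and $0$'s lying above, the thresholds $s$ and $s+t$, the telescoping collapses everything to the two positions $s+k$ and $s+(t-k)$, which are the only positions in a length-$t$ window whose residue modulo $t$ is $(s\pm k)\bmod t$. At each of these two positions the contribution is $[z=1]+[z=0]=1$, so the net change is exactly $2$.

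Two degenerate situations need separate care and form the main obstacle. When $k=t/2$ (so $t$ is even and $k=t-k$) the two positions $s+k$ and $s+(t-k)$ coincide; here the convention in the definition of $B_k$ of keeping two copies matches the fact that $|\lambda(k)|+|\lambda(t-k)|=2|\lambda(t/2)|$, and the single surviving position still yields a net change of $2$. When $k=0$ the two relevant positions degenerate to $s$ and $s+t$ themselves, so the local count breaks down; instead I would invoke the Littlewood decomposition directly: $\lambda(0)=\mathcal{H}_t(\lambda)$ has cardinality $\sum_{i=0}^{t-1}|\mathcal{H}(\lambda^i)|=\sum_{i=0}^{t-1}|\lambda^i|=n$, while $|\mu(0)|=0$, and since $t-0\equiv 0\pmod{t}$ the left-hand side equals $2|\lambda(0)|-2|\mu(0)|=2n$. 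The delicate part throughout is the bookkeeping of the four pair-families and verifying that, precisely because one sums over both residues $k$ and $t-k$, all terms cancel except the evaluations at $s+k$ and $s+(t-k)$, positions that differ from $s$ and $s+t$ and are therefore unaffected by the swap.
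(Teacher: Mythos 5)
Your proof is correct and takes essentially the same route as the paper: the paper's proof is a two-line sketch that says the result follows by induction on the size of $\lambda$, using the fact that adding a $t$-hook swaps $(z_s,z_{s+t})=(0,1)$ to $(1,0)$ together with Lemma~\ref{th:01sequencehookcontent}, and your argument simply carries out the local pair-counting that this sketch leaves implicit (your Iverson-bracket collapse at positions $s+k$ and $s+t-k$ is exactly the needed computation). Your separate treatment of $k=0$ via the Littlewood decomposition is a clean way to sidestep the double-counting of the new pair $(s,s+t)$, and is fully consistent with the paper's framework.
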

\begin{proof}
Adding a $t$-hook to a partition $\lambda$ means that we change some $(z_{i} = 0$, $z_{i+t} = 1)$ to $(z_{i} = 1$, $z_{i+t} =0)$ in the 01-sequence of $\lambda$. Then, the lemma  follows directly from Lemma \ref{th:01sequencehookcontent} and induction on the size of $\lambda.$
\end{proof}

\section{Contents, Hook lengths and $t$-difference operators}\label{sec:ProofMain}
By the construction of 01-sequences and Lemma
\ref{th:01sequencehookcontent}, the following lemma is obvious.
\begin{lem}\label{th:01sequencecontent}
Let $z(\lambda)=(z_{\lambda,i})_{i\in \mathbb{Z}}$ be the 01-sequence of a
partition $\lambda$. Then, $\lambda$ has an inner (resp. outer)
corner with content $k$ if and only if $z_{\lambda,k-1}=0,\ z_{\lambda,k}=1$ (resp.
$z_{\lambda,k-1}=1,\ z_{\lambda,k}=0$).
\end{lem}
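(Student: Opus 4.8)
The plan is to read off both assertions from the content bookkeeping already recorded in Lemma~\ref{th:01sequencehookcontent}, combined with the geometric meaning of corners introduced in Section~\ref{sec:main}: an outer corner is a removable box and an inner corner is an addable box, and each corresponds to a turn of the boundary path, i.e.\ to a place where two consecutive entries of the 01-sequence disagree. So the whole proof reduces to a local analysis of the two patterns ``$10$'' and ``$01$'', together with a content computation.

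First I would treat the outer corners. A box is removable precisely when it has hook length~$1$, since removability means that neither the box below it nor the box to its right lies in $\lambda$. By Lemma~\ref{th:01sequencehookcontent} a box of hook length $1$ is exactly a consecutive pair $(i,j)=(k-1,k)$ with $z_{k-1}=1$ and $z_k=0$, and then its content is obtained from the rule ``for an edge labeled $z_k=0$ the box to its left has content $k$''. Hence $\lambda$ has an outer corner of content $k$ if and only if $z_{k-1}=1$ and $z_k=0$, which is the asserted dictionary in the outer case.

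For the inner corners I would argue dually. Adding a box at an addable cell of content $k$ replaces a pattern $z_{k-1}=0,\ z_k=1$ by $z_{k-1}=1,\ z_k=0$ in the 01-sequence, creating a new removable box $(k-1,k)$ of hook length~$1$; by the previous paragraph this new box has content $k$, so the inner corner at which it was added also has content $k$. Conversely every occurrence $z_{k-1}=0,\ z_k=1$ is an addable position arising in exactly this way, and these account for all inner corners (including the extreme ones captured by the convention $\alpha_{m+1}=\beta_0=0$). Thus $\lambda$ has an inner corner of content $k$ if and only if $z_{k-1}=0$ and $z_k=1$.

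The routine-but-delicate point, and the only place where an error could creep in, is the content bookkeeping: one must check that the index~$k$ attached to a pattern by Lemma~\ref{th:01sequencehookcontent} agrees on the nose with the content $x_i=\beta_i-\alpha_{i+1}$ (resp.\ $y_j=\beta_j-\alpha_j$) assigned to inner (resp.\ outer) corners in Section~\ref{sec:main}, with no off-by-one shift and with the correct orientation convention (up~$=0$, right~$=1$). I would pin this down against the running example $\lambda=(6,3,2,2)$, whose inner-corner contents $(x_0,\dots,x_3)=(-4,0,2,6)$ and outer-corner contents $(y_1,y_2,y_3)=(-2,1,5)$ must coincide with the positions of the ``$01$'' and ``$10$'' patterns in its 01-sequence; this confirms the indexing once and for all, after which the lemma is indeed immediate.
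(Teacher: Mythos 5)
Your proof is correct and takes essentially the approach the paper intends: the paper in fact gives no written proof, declaring the lemma obvious ``by the construction of 01-sequences and Lemma~\ref{th:01sequencehookcontent},'' and your two-step bookkeeping (outer corners identified as the hook-length-$1$ boxes, hence the adjacent pairs $z_{k-1}=1$, $z_k=0$; inner corners reduced to the outer case via the $01\to 10$ swap when a box is added) is exactly that argument made explicit. One small remark: the closing appeal to the example $\lambda=(6,3,2,2)$ is only a sanity check on conventions rather than a needed step, since your first two paragraphs already pin down the indexing in general through the content rules of Lemma~\ref{th:01sequencehookcontent}.
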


For each partition $\lambda$ let $\mathcal{C}(\lambda)$ be the multiset of its
contents. We explicitly compute how contents change when adding a box to one of the quotients in the Littlewood decomposition.
\begin{lem} \label{th:contentadd}
Suppose that $\mu$ is a given $t$-core partition. Let
\begin{align*}
	\lambda&=(\mu;
\lambda^0,\lambda^1,\ldots,\lambda^{i-1},\lambda^i,\lambda^{i+1},\ldots,\lambda^{t-1})\\
\noalign{{\noindent and}}
\lambda^+&=(\mu;
\lambda^0,\lambda^1,\ldots,\lambda^{i-1},\lambda^{i}\cup\{\square_i\},\lambda^{i+1},\ldots,\lambda^{t-1}).\\
\noalign{{\noindent Then,}}
\mathcal{C}(\lambda^+)\setminus \mathcal{C}(\lambda) &= \{c_{\square_i}t+b_i,
c_{\square_i}t+b_i-1,\ldots,c_{\square_i}t+b_i-(t-1) \}.
\end{align*}
\end{lem}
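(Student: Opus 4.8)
The plan is to translate the addition of $\square_i$ to the quotient $\lambda^i$ into an explicit two-entry modification of the bi-infinite sequence $z(\lambda)$, and then read off the affected contents from Lemma~\ref{th:01sequencehookcontent}. Write $c:=c_{\square_i}$ for the content of $\square_i$ inside $\lambda^i$. By Lemma~\ref{th:01sequencecontent} applied to $\lambda^i$, adding a box at the addable corner of content $c$ swaps the entries $01$ into $10$ at positions $c-1,c$ of the \emph{centred} 01-sequence of $\lambda^i$. To see where this lands in $z(\lambda)$, I would use that $\mu$ is the $t$-core: the subsequence $(z_{\mu,tl+i})_{l}$ equals $0$ for $l<d_i$ and $1$ for $l\ge d_i$, so recentring the $i$-th runner amounts to the shift by $d_i$. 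This shift is unchanged under the addition of a $t$-hook, since such an addition is a single leftward bead move inside one residue class and therefore preserves the centring of that runner. Hence the same $d_i$ serves every $\lambda$ with $\lambda_\tcore=\mu$, and the swap takes place in $z(\lambda)$ at
$$ p=t(c-1+d_i)+i \quad\text{and}\quad q=t(c+d_i)+i, $$
with $z_{\lambda,p}$ changing $0\to1$ and $z_{\lambda,q}$ changing $1\to0$. Using $b_i=td_i+i$ this is exactly $q=tc+b_i$ and $p=q-t$.

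Next I would observe that this move is the addition of a $t$-hook in which the bead at $q$ slides left to $p=q-t$; thus $\lambda\subseteq\lambda^+$ and the skew shape $\lambda^+/\lambda$ is a border strip (ribbon) of size $t$. A border strip meets each diagonal at most once and occupies a run of consecutive diagonals, so its $t$ cells have contents forming $t$ consecutive integers. It therefore remains only to determine the top of this interval.

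I would fix the interval by applying the content-reading half of Lemma~\ref{th:01sequencehookcontent} to $\lambda^+$ at the two modified edges. In $\lambda^+$ the edge at $q$ is labelled $0$, so the box immediately to its left has content $q$; in $\lambda^+$ the edge at $p$ is labelled $1$, so the box just above it has content $p+1=q-(t-1)$. As these two contents differ by exactly $t-1$ and the ribbon's contents form an interval of $t$ consecutive integers, that interval must be $\{q-(t-1),\dots,q\}$. Substituting $q=tc_{\square_i}+b_i$ yields
$$ \mathcal{C}(\lambda^+)\setminus\mathcal{C}(\lambda)=\{\,c_{\square_i}t+b_i,\ c_{\square_i}t+b_i-1,\ \dots,\ c_{\square_i}t+b_i-(t-1)\,\}, $$
as claimed.

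The step I expect to be most delicate is the last one: one must verify that the two boxes furnished by the content-reading lemma actually lie in $\lambda^+/\lambda$ and realise its extreme diagonals, rather than being interior cells of the ribbon or cells already present in $\lambda$. The key input here is that $z(\lambda)$ and $z(\lambda^+)$ agree at every index outside $\{p,q\}$, which forces every diagonal of content $>q$ (resp.\ $<p+1$) to carry equally many cells in $\lambda$ and in $\lambda^+$, so that the ribbon occupies precisely the diagonals $p+1,\dots,q$. The earlier assertion that the recentring shift is the common value $d_i$ for all $\lambda$ with $t$-core $\mu$ is the only other point requiring care, and it follows from the invariance of a runner's centring under single bead moves.
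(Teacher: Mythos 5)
Your proof is correct and follows essentially the same route as the paper's: identify the quotient's centred 01-sequence with the runner of $z(\lambda)$ via $z_{\lambda^i,j}=z_{\lambda,jt+b_i}$, realise the box addition as the swap $(z_{\lambda,(c-1)t+b_i},z_{\lambda,ct+b_i})=(0,1)\to(1,0)$, and read off the contents from Lemma~\ref{th:01sequencehookcontent}. The only difference is one of detail: you spell out the recentring-by-$d_i$ argument and the final ribbon/diagonal-count step, both of which the paper asserts tersely (``since $\lambda_\tcore=\mu$'' and a direct citation of Lemma~\ref{th:01sequencehookcontent}).
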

\begin{proof}

\goodbreak

Suppose that the $01$-sequence of $\lambda$ is $(z_{\lambda,j})_{j\in
\mathbb{Z}}$. For each $0\leq i \leq t-1$ let $(z_{\lambda^i,j})_{j\in
\mathbb{Z}}$ be the $01$-sequence of $\lambda^i$. Then, 
 $z_{\lambda^i,j}=z_{\lambda,jt+b_i}$ since $\lambda_{\tcore}=\mu$. Notice that $b_i$ are
determined by $\mu$. Let $c=c_{\square_i}$ be the content of $\square_i$. Then, by Lemma~\ref{th:01sequencecontent} adding the box
$\square_i$ to $\lambda^i$ means that
$(z_{\lambda^i,c-1},z_{\lambda^i,c})$, which is equal to $(0,1)$ in the $01$-sequence of $\lambda^i$, is
changed to $(1,0)$; or equivalently,
$(z_{\lambda,(c-1)t+b_i},z_{\lambda,ct+b_i})=(0,1)$ in the $01$-sequence of
$\lambda$ is changed to $(1,0).$ Then by
Lemma \ref{th:01sequencehookcontent} this means that we add boxes
with contents $$ct+b_i, ct+b_i-1,\ldots,ct+b_i-(t-1) $$ to
$\lambda.$
\end{proof}

By induction and Lemma \ref{th:contentadd} we obtain the following
lemma.

\begin{lem} \label{th:contentadd1}
Suppose that $\mu$ is a given $t$-core partition. Then 
$$
\mathcal{C}(\lambda)\setminus \mathcal{C}(\mu)= \bigcup_{i=0}^{t-1}
\{tc_{\square_i}+b_i-j:0\leq j\leq t-1, \square_{i} \in \lambda^i\}
$$
for every partition $\lambda$ with $\lambda_\tcore=\mu$.
\end{lem}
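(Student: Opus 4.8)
The plan is to prove Lemma~\ref{th:contentadd1} by induction on the size $|\lambda^0|+|\lambda^1|+\cdots+|\lambda^{t-1}|$, using Lemma~\ref{th:contentadd} as the single-box step. The base case is $\lambda=\mu$, where every quotient $\lambda^i$ is empty; then the right-hand side is the empty union, and $\mathcal{C}(\lambda)\setminus\mathcal{C}(\mu)=\emptyset$, so the identity holds trivially.

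For the inductive step, I would take a partition $\lambda$ with $\lambda_\tcore=\mu$ and write $\lambda^+=(\mu;\lambda^0,\ldots,\lambda^{i}\cup\{\square_i\},\ldots,\lambda^{t-1})$, obtained by adding a single box $\square_i$ to the $i$-th quotient. By Lemma~\ref{th:contentadd}, passing from $\lambda$ to $\lambda^+$ adjoins exactly the contents $\{tc_{\square_i}+b_i-j:0\leq j\leq t-1\}$ to the content multiset. The inductive hypothesis describes $\mathcal{C}(\lambda)\setminus\mathcal{C}(\mu)$ as the union over all boxes already present in the quotients of $\lambda$; adding this one new block of $t$ contents, indexed by the new box $\square_i\in(\lambda^{i}\cup\{\square_i\})^i=(\lambda^+)^i$, produces precisely the stated union for $\lambda^+$. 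Since every partition $\lambda$ with $\lambda_\tcore=\mu$ is reached from $\mu$ by a sequence of such single-box additions to the quotients (this is exactly what the ordering $\geq_t$ and the Littlewood bijection guarantee), the induction covers all cases.

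The one point requiring care—and the only real obstacle—is bookkeeping the multiset structure so that no cancellation or double-counting is silently assumed. The identity is an equality of \emph{multisets}, so I would be explicit that Lemma~\ref{th:contentadd} computes $\mathcal{C}(\lambda^+)\setminus\mathcal{C}(\lambda)$ as a genuine multiset difference of size exactly $t$ (one should check that the $t$ newly created contents $tc_{\square_i}+b_i-j$ are all \emph{new} occurrences and that no existing contents are deleted, which is clear since adding a box to a Young diagram only enlarges the content multiset). Then the disjoint-union bookkeeping across the induction is automatic: each box $\square_i\in\lambda^i$ contributes its own block of $t$ contents, and distinct boxes contribute distinct blocks (tracked by the index $i$ together with $c_{\square_i}$). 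I expect the verification that the indexing set on the right-hand side is exactly $\{(\square_i,j):0\leq j\leq t-1,\ \square_i\in\lambda^i\}$ for all $i$ to be routine once the single-box step and the multiset nature are pinned down.

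Thus the result follows by straightforward induction, with Lemma~\ref{th:contentadd} supplying the increment at each step and the multiset disjointness ensuring the blocks accumulate without interference.
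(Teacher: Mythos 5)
Your proposal is correct and is exactly the argument the paper intends: the paper proves Lemma~\ref{th:contentadd1} by the one-line remark ``by induction and Lemma~\ref{th:contentadd},'' and your write-up simply fleshes out that same induction (base case $\lambda=\mu$, single-box increments in the quotients via Lemma~\ref{th:contentadd}, multiset bookkeeping). The extra care you take with the multiset union is a reasonable elaboration, not a deviation.
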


For the partition $\lambda$ and $0\leq i\leq t-1$, let
$x_{i,l}\ (0\leq l\leq m_i)$ be the contents of inner corners of
$\lambda^i$
and
$y_{i,l}\ (1\leq l\leq m_i)$ be the  contents of outer corners of
$\lambda^i$.

\begin{lem}\label{th:hookdiff}
Suppose that $\mu$ is a given $t$-core partition, $r$ is a given integer, $1\leq k\leq t-1$, and $0\leq i\leq t-1$.
Let
$$
\lambda=(\mu;
\lambda^0,\lambda^1,\ldots,\lambda^{i-1},\lambda^i,\lambda^{i+1},\ldots,\lambda^{t-1})
$$
and
$$
\lambda^+=(\mu;
\lambda^0,\lambda^1,\ldots,\lambda^{i-1},\lambda^{i}\cup\{\square_i\},\lambda^{i+1},\ldots,\lambda^{t-1}).
$$    
 Then
 \begin{multline*}
\sum\limits_{\substack{\square\in \lambda^+\\ h_{\square}\equiv0
(\modsymb t )}}h_{\square}^{2r} - \sum\limits_{\substack{\square\in
\lambda\\ h_{\square}\equiv0 (\modsymb t )}}h_{\square}^{2r} =\\ t^{2r}+
\sum\limits_{\substack{0\leq l \leq
    m_i}} \left(t(c_{\square_i}-x_{i,l})\right)^{2r}-
    \sum\limits_{\substack{1\leq l \leq
    m_i}} \left(t(c_{\square_i}-y_{i,l})\right)^{2r}
\end{multline*} and
\begin{align*}
\ &\sum\limits_{\substack{\square\in\lambda^+\\ h_\square\equiv
k (\modsymb t )}}h_\square^{2r}+\sum\limits_{\substack{\square\in\lambda^+\\
h_\square\equiv t-k (\modsymb t )}}h_\square^{2r} -
\sum\limits_{\substack{\square\in\lambda\\ h_\square\equiv
k (\modsymb t )}}h_\square^{2r}-\sum\limits_{\substack{\square\in\lambda\\
h_\square\equiv t-k (\modsymb t )}}h_\square^{2r}\\ &=
\sum\limits_{\substack{0\leq l \leq
    m_{i'}}} (tc_{\square_i}+b_i-tx_{i',l}-b_{i'})^{2r}-
    \sum\limits_{\substack{1\leq l \leq
    m_{i'}}} (tc_{\square_i}+b_i-ty_{i',l}-b_{i'})^{2r}
    \\ &+ \sum\limits_{\substack{0\leq
l \leq
    m_{i''}}} (tc_{\square_i}+b_i-tx_{i'',l}-b_{i''})^{2r}-
    \sum\limits_{\substack{1\leq l \leq
    m_{i''}}} (tc_{\square_i}+b_i-ty_{i'',l}-b_{i''})^{2r}
\end{align*} where $0\leq i', i''\leq
t-1$ satisfy $i'\equiv i+k (\modsymb t)$ and $i''\equiv
i-k (\modsymb t)$. Furthermore,
\begin{align*}
\sum_{\square\in \lambda^+}h_{\square}^{2r} - \sum_{\square\in
\lambda}h_{\square}^{2r} &=t^{2r} 
    +\sum_{j=0}^{t-1}\Bigl(\sum\limits_{\substack{0\leq l \leq
    m_j}} (tc_{\square_i}+b_i-tx_{j,l}-b_j)^{2r}
    \\ &-
    \sum\limits_{\substack{1\leq l \leq
    m_j}} (tc_{\square_i}+b_i-ty_{j,l}-b_j)^{2r}\Bigr).
\end{align*}
\end{lem}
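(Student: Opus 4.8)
The plan is to deduce all three identities from a single computation in the 01-sequence $z(\lambda)=(z_j)_{j\in\mathbb{Z}}$. By Lemmas~\ref{th:contentadd} and~\ref{th:01sequencecontent}, adding $\square_i$ to $\lambda^i$ (write $c:=c_{\square_i}$) amounts to flipping $z_p\colon 0\to 1$ and $z_q\colon 1\to 0$ in $z(\lambda)$, where $p=(c-1)t+b_i$ and $q=ct+b_i=tc_{\square_i}+b_i$, so that $q-p=t$. Since only the two positions $p,q$ change value, Lemma~\ref{th:01sequencehookcontent} (hooks $\leftrightarrow$ pairs $(a,b)$ with $a<b$, $z_a=1$, $z_b=0$, of length $b-a$) lets me list exactly the hook-pairs that are created or destroyed: the pair $(p,q)$ is created (hook $t$); the pairs $(p,j)$ with $j>p,\ z_j=0$ and $(a,q)$ with $a<q,\ z_a=1$ are created; and the pairs $(a,p)$ with $a<p,\ z_a=1$ and $(q,j)$ with $j>q,\ z_j=0$ are destroyed. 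Hence, for every $r$,
\[
\sum_{\square\in\lambda^+} h_\square^{2r}-\sum_{\square\in\lambda}h_\square^{2r}
=t^{2r}
+\sum_{\substack{j>p\\ z_j=0}}(j-p)^{2r}
+\sum_{\substack{a<q\\ z_a=1}}(q-a)^{2r}
-\sum_{\substack{a<p\\ z_a=1}}(p-a)^{2r}
-\sum_{\substack{j>q\\ z_j=0}}(j-q)^{2r}.
\]

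Next I would split each of the four sums according to the residue class $i'$ of the endpoint different from $p,q$, using $z_{\lambda,ts+b_{i'}}=z_{\lambda^{i'},s}$ to pass to the quotient $\lambda^{i'}$. Because $p=q-t$, reindexing $s\mapsto s+1$ aligns the two sums attached to $p$ with those attached to $q$ over a common range of $s$, and here the evenness of the exponent is essential: since $(ts+b_{i'}-q)^{2r}=(q-ts-b_{i'})^{2r}$, the paired differences collapse to $\sum_s\bigl(z_{\lambda^{i'},s}-z_{\lambda^{i'},s-1}\bigr)(q-ts-b_{i'})^{2r}$ over the relevant range. By Lemma~\ref{th:01sequencecontent} the weight $z_{\lambda^{i'},s}-z_{\lambda^{i'},s-1}$ equals $+1$ at an inner corner $s=x_{i',l}$ of $\lambda^{i'}$, $-1$ at an outer corner $s=y_{i',l}$, and $0$ otherwise. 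The pieces coming from $q$ and from $p$ cover the corners lying below and above the threshold $s=(q-b_{i'})/t$; they glue into a sum over \emph{all} corners of $\lambda^{i'}$, the only corner that could fall exactly on the threshold being the one at $s=c$ when $i'=i$, which contributes $(q-tc-b_i)^{2r}=0$. Thus the residue-$i'$ part of the difference equals $\sum_l(q-tx_{i',l}-b_{i'})^{2r}-\sum_l(q-ty_{i',l}-b_{i'})^{2r}$ with $q=tc_{\square_i}+b_i$. This telescoping is the delicate step, and checking that the below- and above-threshold pieces fit together (with the boundary term vanishing) is where I expect the main difficulty to lie.

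Finally I would read off the three identities by tracking congruences. As $p,q\equiv i\pmod t$, a changed hook whose other endpoint lies in class $i'$ has length $\equiv i-i'$ or $\equiv i'-i\pmod t$, according to whether that endpoint plays the role of $a$ or of $j$. Summing the residue-$i'$ contributions over all $i'$, together with the new hook $t$, yields the total (third) identity. Restricting to $i'=i$ isolates the hooks $\equiv 0$: there $b_i$ cancels and the summands become $\bigl(t(c_{\square_i}-x_{i,l})\bigr)^{2r}$ and $\bigl(t(c_{\square_i}-y_{i,l})\bigr)^{2r}$, giving the first identity. Collecting the two classes $i'\equiv i+k$ and $i''\equiv i-k$ produces exactly $\sum_{h\equiv k}h^{2r}+\sum_{h\equiv t-k}h^{2r}$ and the stated right-hand side, which is the second identity; the case $k=t-k$ (when $i'=i''$) correctly yields the two equal copies. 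In this way every assertion of the lemma follows from the single boxed difference formula after the residue-class telescoping.
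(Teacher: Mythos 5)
Your proof is correct and follows essentially the same route as the paper's: both identify the added $t$-hook with the flip of $(z_{\lambda,(c-1)t+b_i},z_{\lambda,ct+b_i})$ from $(0,1)$ to $(1,0)$ in the 01-sequence, classify the created and destroyed hook pairs via Lemmas~\ref{th:01sequencehookcontent} and~\ref{th:01sequencecontent}, split them by the residue class of the second endpoint, and use evenness of the exponent to reduce everything to corner contributions of the quotients (including the same treatment of the vanishing boundary term at $s=c_{\square_i}$). The only difference is bookkeeping: the paper lists the changed hook lengths explicitly as arithmetic progressions between consecutive corners and cancels them as multisets, whereas you collapse the identical cancellation through the discrete-derivative weights $z_{\lambda^{i'},s}-z_{\lambda^{i'},s-1}$; both land on the same corner sums.
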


\goodbreak

\begin{proof}

Suppose that the $01$-sequence of $\lambda$ is $(z_{\lambda,j})_{j\in
\mathbb{Z}}$. For each $0\leq i \leq t-1,$ let $(z_{\lambda^i,j})_{j\in
\mathbb{Z}}$ be the $01$-sequence of $\lambda^i$. Then, by the Littlewood
decomposition we have 
 $z_{\lambda^i,j}=z_{\lambda,jt+b_i}.$ Let $c=c_{\square_i}=x_{i,w_i}$.  By Lemma \ref{th:01sequencecontent}, adding the box
$\square_i$ to $\lambda^i$ means that
$(z_{\lambda^i,c-1},z_{\lambda^i,c})=(0,1)$ in the $01$-sequence of $\lambda^i$ is
changed to $(1,0)$, or equivalently,
$(z_{\lambda,(c-1)t+b_i},z_{\lambda,ct+b_i})=(0,1)$ in the $01$-sequence of
$\lambda$ is changed to $(z_{\lambda^+,(c-1)t+b_i},z_{\lambda^+,ct+b_i})=(1,0).$ 
By Lemma \ref{th:01sequencehookcontent}, to see the
difference between hook length sets of $\lambda^+$ and $\lambda,$ we
just need to consider the different pairs $(j,j')$ where $j<j',$
$(1,0)=(z_{\lambda,j},z_{\lambda,j'})$ or $(z_{\lambda^+,j},z_{\lambda^+,j'})$,  and  $\{j, j'\} \bigcap\{(c-1)t+b_i,
ct+b_i\}\neq \emptyset$ in the 01-sequences of $\lambda$ and
$\lambda^+$.

{\bf Case (i)}. First consider such pairs $(j,j')$ satisfying
$j'-j\equiv 0 (\modsymb t)$.
 Since $(z_{\lambda,(c-1)t+b_i},z_{\lambda,ct+b_i})=(0,1)$ and $(z_{\lambda^+,(c-1)t+b_i},z_{\lambda^+,ct+b_i})=(1,0)$, 
 it follows from Lemmas
\ref{th:01sequencehookcontent} and  \ref{th:01sequencecontent}  that 
the
hook lengths of $\lambda^+$ for such pairs $(j,j')$ are
\begin{align*}
    t,t(c-x_{i,l}),t(c-x_{i,l}-1),\cdots,t(c-y_{i,l+1}+1) \quad (0\leq l \leq
    w_i-1), \\  t(x_{i,l}-c),t(x_{i,l}-c-1),\cdots,t(y_{i,l}-c+1) \quad (w_i+1\leq l \leq
    m_i),
\end{align*}
and the  hook lengths of $\lambda$ for such pairs are
\begin{align*}
    t(c-x_{i,l}-1),t(c-x_{i,l}-2),\cdots,t(c-y_{i,l+1}) \quad (0\leq l \leq
    w_i-1), \\  t(x_{i,l}-c-1),t(x_{i,l}-c-2),\cdots,t(y_{i,l}-c) \quad (w_i+1\leq l \leq
    m_i).
\end{align*}
Let $|a|$ be the absolute value for every real number $a$. Thus
\begin{align*}\{&h_{\square}\in\mathcal{H}(\lambda^+):h_{\square}\equiv0
(\modsymb t )\}\setminus
\{h_{\square}\in\mathcal{H}(\lambda):h_{\square}\equiv0 (\modsymb t )\}\\
&=\left(\{t\}\bigcup\{ t(c-x_{i,l}): 0\leq l \leq
    w_i-1\}\bigcup \{ t(x_{i,l}-c): w_i+1\leq l \leq
    m_i\}\right)\\ & \setminus
    \left(\{ t(c-y_{i,l+1}): 0\leq l \leq
    w_i-1\}\bigcup \{ t(y_{i,l}-c): w_i+1\leq l \leq
    m_i\}\right) \\
&=\left(\{t\}\bigcup\{ |t(c-x_{i,l})|: 0\leq l \leq
    m_i, l\neq w_i\}\right) \setminus
    \{ |t(c-y_{i,l+1})|: 1\leq l \leq
    m_i\},\end{align*}
    which means that
\begin{multline*}
\sum\limits_{\substack{\square\in \lambda^+\\ h_{\square}\equiv0
(\modsymb t )}}h_{\square}^{2r} - \sum\limits_{\substack{\square\in
\lambda\\ h_{\square}\equiv0 (\modsymb t )}}h_{\square}^{2r} =\\ t^{2r}+
\sum\limits_{\substack{0\leq l \leq
    m_i}} \left(t(c-x_{i,l})\right)^{2r}-
    \sum\limits_{\substack{1\leq l \leq
    m_i}} \left(t(c-y_{i,l})\right)^{2r}
\end{multline*}
since $c=x_{i,w_i}$.

{\bf Case (ii)}. For $1\leq k\leq t-1$ consider such pairs
$(j,j')$ satisfying $j'-j\equiv k\ \text{or}   \ t-k  (\modsymb t)$. 
Let $0\leq i', i''\leq
t-1$ satisfy $i'\equiv i+k (\modsymb t)$ and $i''\equiv
i-k (\modsymb t)$. 
Then, by the similar argument as in Case (i) we obtain
\begin{align*}\ &\bigl( \{h_{\square}\in\mathcal{H}(\lambda^+):h_{\square}\equiv k
(\modsymb t )\}\bigcup \{h_{\square}\in\mathcal{H}(\lambda^+):h_{\square}\equiv t-k
(\modsymb t )\}\bigr)\\ 
&\setminus \bigl(
\{h_{\square}\in\mathcal{H}(\lambda):h_{\square}\equiv k (\modsymb t )\} \bigcup \{h_{\square}\in\mathcal{H}(\lambda):h_{\square}\equiv t-k (\modsymb t )\}\bigr)\\
&=\bigl(\{ |tc-tx_{i',l}+b_i-b_{i'}|: 0\leq l \leq
    m_{i'}\} \bigcup \{ |tc-tx_{i'',l}+b_i-b_{i''}|: 0\leq l \leq
    m_{i''}\}\bigr)\\
&\setminus
    \bigl(\{ |tc-ty_{i',l}+b_i-b_{i'}|: 1\leq l \leq
    m_{i'}\} \bigcup \{ |tc-ty_{i'',l}+b_i-b_{i''}|: 1\leq l \leq
    m_{i''}\}\bigr),
	\end{align*}
which means that   
\begin{align*}
\ &\sum\limits_{\substack{\square\in\lambda^+\\ h_\square\equiv
k (\modsymb t )}}h_\square^{2r}+\sum\limits_{\substack{\square\in\lambda^+\\
h_\square\equiv t-k (\modsymb t )}}h_\square^{2r} -
\sum\limits_{\substack{\square\in\lambda\\ h_\square\equiv
k (\modsymb t )}}h_\square^{2r}-\sum\limits_{\substack{\square\in\lambda\\
h_\square\equiv t-k (\modsymb t )}}h_\square^{2r}\\ &=
\sum\limits_{\substack{0\leq l \leq
    m_{i'}}} (tc+b_i-tx_{i',l}-b_{i'})^{2r}-
    \sum\limits_{\substack{1\leq l \leq
    m_{i'}}} (tc+b_i-ty_{i',l}-b_{i'})^{2r}
    \\ &+ \sum\limits_{\substack{0\leq
l \leq
    m_{i''}}} (tc+b_i-tx_{i'',l}-b_{i''})^{2r}-
    \sum\limits_{\substack{1\leq l \leq
    m_{i''}}} (tc+b_i-ty_{i'',l}-b_{i''})^{2r}.
\end{align*}


Finally, by Cases (i) and (ii) the proof is completed.\qedhere
\end{proof}

Now we are ready to prove Theorem \ref{th:main'}.
\begin{proof}[Proof of Theorem \ref{th:main'}]
By Lemmas \ref{th:contentadd} and  \ref{th:hookdiff},
the following two functions of partitions
$$
g_j(\lambda)=\sum\limits_{\substack{\square\in \lambda\\
h_{\square}\equiv \pm j (\modsymb t )}}h_{\square}^{2k}
$$
and
$$
g'_j(\lambda)=\sum\limits_{\substack{\square\in \lambda\\
c_{\square}\equiv j  (\modsymb t )}}c_{\square}^{k}
$$ 
are $\mu$-admissible for 
any $t$-core partition $\mu$,  nonnegative integer $k$, and $0\leq j\leq t-1$.
Then, the proof is achieved by Theorem \ref{th:main}.\qedhere
\end{proof}


\section{Explicit formulas for the square case}\label{sec:hanconjecture}
Now we are able to deduce more explicit results. 
\begin{thm}\label{th:hookcontentsquare}
 Suppose that $\mu$ is a given $t$-core partition. Recall that $b_i$ and $d_i$ are defined as in Section \ref{sec:bidi}. Let $1\leq k\leq t-1.$ For every partition $\lambda$ with $\lambda_\tcore=\mu$ we
 have
\begin{align*}
&\sum\limits_{\substack{\square\in\lambda\\ h_\square\equiv
k(\modsymb t )}}h_\square^2+\sum\limits_{\substack{\square\in\lambda\\
h_\square\equiv t-k(\modsymb t )}}h_\square^2 -
\Biggl(\sum\limits_{\substack{\square\in\lambda\\ c_\square\equiv
k(\modsymb t )}}c_\square^2+\sum\limits_{\substack{\square\in\lambda\\
c_\square\equiv t-k(\modsymb t )}}c_\square^2\Biggr)\\ 
=\ &\sum\limits_{\substack{\square\in\mu\\ h_\square\equiv
k(\modsymb t )}}h_\square^2+\sum\limits_{\substack{\square\in\mu\\
h_\square\equiv t-k(\modsymb t )}}h_\square^2 -
\Biggl(\sum\limits_{\substack{\square\in\mu\\ c_\square\equiv
k(\modsymb t )}}c_\square^2+\sum\limits_{\substack{\square\in\mu\\
c_\square\equiv t-k(\modsymb t )}}c_\square^2\Biggr)\\
&\quad +
 \sum_{(i,j)\in B_k}
 \Bigl(2t^2n_in_j+ t(b_{j}+j-2b_i)d_jn_i+ t(b_{i}+i-2b_j)d_in_j  \\
 &\quad -  \frac13t^2 \bigl(d_jq_3(\lambda^i)+d_iq_3(\lambda^j)\bigr)\Bigr),
\end{align*}
where $n_i=|\lambda^i|$ for $0\leq i\leq t-1.$ 
\end{thm}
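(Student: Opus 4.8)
The plan is to establish this identity by induction on the size of $\lambda$, tracking how both sides change as we add a single box to one of the quotients $\lambda^i$. The base case is $\lambda = \mu$, where both sides of the identity manifestly agree: the sum over $B_k$ vanishes since every $n_i = |\lambda^i| = |\mu^i| = 0$ and every $q_3(\lambda^i) = q_3(\emptyset) = 0$. So the whole content lies in the inductive step.

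For the inductive step, I would fix $0 \leq i \leq t-1$ and pass from $\lambda$ to $\lambda^+ = (\mu; \ldots, \lambda^i \cup \{\square_i\}, \ldots)$ with $c := c_{\square_i}$, then compute the increment of each of the three groups of terms. The hook-length difference is handed to us directly by Lemma~\ref{th:hookdiff} (Case (ii)), which expresses
$$
\sum_{\substack{\square \in \lambda^+\\ h_\square \equiv \pm k}} h_\square^{2} - \sum_{\substack{\square \in \lambda\\ h_\square \equiv \pm k}} h_\square^{2}
$$
as a sum over inner and outer corners of $\lambda^{i'}$ and $\lambda^{i''}$, with $i' \equiv i+k$ and $i'' \equiv i-k$. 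For the content side, Lemma~\ref{th:contentadd} tells me that the $t$ new contents added are $\{tc + b_i - s : 0 \leq s \leq t-1\}$, so I can compute the increment of $\sum_{c_\square \equiv k} c_\square^2 + \sum_{c_\square \equiv t-k} c_\square^2$ by squaring those contents in the relevant residue classes and summing; the residue $c_\square \bmod t$ of $tc + b_i - s$ is just $(b_i - s) \bmod t = (i-s) \bmod t$, so exactly two of the $t$ new contents land in classes $k$ and $t-k$ (pairing $i$ with $i'$ and $i''$).

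The heart of the computation is then to reconcile the corner-sum expressions coming from the hooks with the squared-content contributions, using $r=1$ (i.e. squares) so that the degree-$2r$ power sums become degree-$2$ power sums of corner contents. Here I expect to invoke Theorem~\ref{th:h012}, which gives $\sum_l H_\lambda/H_{\lambda^{+}} x_{i,l} = 0$ and $\sum_l \cdots x_{i,l}^2 = |\lambda^i|$, together with the definition \eqref{def:qk} of $q_k$, to rewrite alternating sums $\sum_l x_{i',l}^{k} - \sum_l y_{i',l}^{k}$ as $q_k(\lambda^{i'})$. Expanding $(tc + b_i - tx_{i',l} - b_{i'})^2$ and separating the $x$-sums from the $y$-sums, the constant, linear, and quadratic parts in the corner contents should collapse into $q_0(\lambda^{i'}) = 1$, $q_1(\lambda^{i'}) = 0$, $q_2(\lambda^{i'}) = 2n_{i'}$, and $q_3(\lambda^{i'})$ via Theorem~\ref{th:dqnu1}(1). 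The main obstacle will be the bookkeeping: matching the $B_k$-indexed terms on the right-hand side against the per-box increments and then summing these increments over all boxes of $\lambda^i$ for each $i$. In particular, the cross terms $t(b_j + j - 2b_i)d_j n_i$ and the cubic corrections $-\frac13 t^2 d_j q_3(\lambda^i)$ must emerge from telescoping the $c$-dependent parts across the box-by-box additions; this requires carefully re-expressing sums of $c^2$, $c^1$, and $c^0$ over the added boxes using the identities $q_k$-increments of Theorem~\ref{th:dqnu1}(2) and the structural relations \eqref{eq:Bk2}--\eqref{eq:card:mu} connecting the $b_i, d_i$ to $|\mu|$. I would organize the verification so that the $\mu$-dependent difference on the right isolates automatically as the $\lambda = \mu$ base value, leaving only the $n_i n_j$, mixed $b$-$d$-$n$, and $q_3$ terms to be checked against the accumulated increments.
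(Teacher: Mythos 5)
Your proposal follows essentially the same route as the paper's own proof: induction on $|\lambda|$ with base case $\lambda=\mu$, per-box increments computed from Lemma~\ref{th:hookdiff} (Case (ii) with $r=1$) and Lemma~\ref{th:contentadd}, the alternating corner sums collapsed via $q_0=1$, $q_1=0$, $q_2=2n_{i'}$ from Theorem~\ref{th:dqnu1}(1), and the linear-in-$c_{\square_i}$ leftovers absorbed exactly by the increment $q_3(\lambda^{i}\cup\{\square_i\})-q_3(\lambda^i)=6c_{\square_i}$ of Theorem~\ref{th:dqnu1}(2). The only cosmetic difference is that you also invoke Theorem~\ref{th:h012} and the relations \eqref{eq:Bk2}--\eqref{eq:card:mu}, which are not actually needed for this pointwise identity (they enter only in the Plancherel-averaged theorems); everything required is already contained in Theorem~\ref{th:dqnu1}.
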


\begin{proof}
We will prove this result by induction on the size of $\lambda.$
When $\lambda=\mu$, we have $\lambda^i=\emptyset$, $n_i=0$ and thus
the identity holds. Now suppose that the identity holds for
$$
\lambda=(\mu;
\lambda^0,\lambda^1,\ldots,\lambda^{i-1},\lambda^i,\lambda^{i+1},\ldots,\lambda^{t-1}).
$$
We just need to prove it for
$$
\lambda^+=(\mu;
\lambda^0,\lambda^1,\ldots,\lambda^{i-1},\lambda^{i}\cup\{\square_i\},\lambda^{i+1},\ldots,\lambda^{t-1}).
$$
Let $0\leq i', i''\leq
t-1$ satisfy $i'\equiv i+k (\modsymb t)$ and $i''\equiv
i-k (\modsymb t)$.
Set $x_{i,w_i}=c_{\square_i}.$ 
By Lemma \ref{th:hookdiff}, we have
\begin{align*}
&\sum\limits_{\substack{\square\in\lambda^+\\ h_\square\equiv k (\modsymb t )}}h_\square^2+\sum\limits_{\substack{\square\in\lambda^+\\ h_\square\equiv t-k (\modsymb t )}}h_\square^{2} - \sum\limits_{\substack{\square\in\lambda\\ h_\square\equiv k (\modsymb t )}}h_\square^2-\sum\limits_{\substack{\square\in\lambda\\ h_\square\equiv t-k (\modsymb t )}}h_\square^{2}\\
=\  &\sum\limits_{\substack{0\leq l \leq m_{i'}}} (tx_{i,w_i}+b_i-tx_{i',l}-b_{i'})^{2}- \sum\limits_{\substack{1\leq l \leq m_{i'}}} (tx_{i,w_i}+b_i-ty_{i',l}-b_{i'})^{2} \\ 
&\quad + \sum\limits_{\substack{0\leq l \leq m_{i''}}} (tx_{i,w_i}+b_i-tx_{i'',l}-b_{i''})^{2}- \sum\limits_{\substack{1\leq l \leq  m_{i''}}} (tx_{i,w_i}+b_i-ty_{i'',l}-b_{i''})^{2}\\ 
	=\  &(tx_{i,w_i}+b_i-b_{i'})^{2}-2t(tx_{i,w_i}+b_i-b_{i'})\Biggl(\sum\limits_{\substack{0\leq l \leq m_{i'}}}x_{i',l}-\sum\limits_{\substack{1\leq l \leq m_{i'}}}y_{i',l}\Biggr)\\ 
  &\quad +t^2 \Biggl(\sum\limits_{\substack{0\leq l \leq m_{i'}}}x_{i',l}^2-\sum\limits_{\substack{1\leq l \leq  m_{i'}}}y_{i',l}^2\Biggr)\\
  &\quad + (tx_{i,w_i}+b_i-b_{i''})^{2} -2t(tx_{i,w_i}+b_i-b_{i''})\Biggl(\sum\limits_{\substack{0\leq l \leq m_{i''}}}x_{i'',l}-\sum\limits_{\substack{1\leq l \leq m_{i''}}}y_{i'',l}\Biggr)\\ 
&\quad+t^2 \Biggl(\sum\limits_{\substack{0\leq l \leq m_{i''}}}x_{i'',l}^2-\sum\limits_{\substack{1\leq l \leq m_{i''}}}y_{i'',l}^2\Biggr)\\
=\ &(tx_{i,w_i}+b_i-b_{i'})^{2}+(tx_{i,w_i}+b_i-b_{i''})^{2}+2t^2n_{i'}+2t^2n_{i''}.
\end{align*}
The last equality is due to Theorem \ref{th:dqnu1}(1). 
On the other hand, by Lemma
\ref{th:contentadd} we have
\begin{align*}
\ &\sum\limits_{\substack{\square\in\lambda^+\\ c_\square\equiv
k (\modsymb t )}}c_\square^2+\sum\limits_{\substack{\square\in\lambda^+\\
c_\square\equiv t-k (\modsymb t )}}c_\square^{2} -
\Biggl(\sum\limits_{\substack{\square\in\lambda\\ c_\square\equiv
k (\modsymb t )}}c_\square^2+\sum\limits_{\substack{\square\in\lambda\\
c_\square\equiv t-k (\modsymb t )}}c_\square^{2}\Biggr)
\\ &=(tx_{i,w_i}+b_i-i')^{2}+(tx_{i,w_i}+b_i-i'')^{2}.
\end{align*}
By Theorem \ref{th:dqnu1}(3) we have
$$q_3(\lambda^{i}\cup\{\square_i\})-q_3(\lambda^i)=6x_{i,w_i}.$$

Finally, after putting those identities together we know the theorem is also true for $\lambda^+$ and thus prove the claim.
\end{proof}

We now turn to a theorem still focused on one partition at a time, but this time summing over contents or hook lengths divisible by $t$.
\begin{thm}\label{th:hookcontentsquare1}
 Suppose that $\mu$ is a given $t$-core partition. For every partition $\lambda$ with $\lambda_\tcore=\mu$ we
 have
\begin{align*}
	&\sum\limits_{\substack{\square\in\lambda\\ h_\square\equiv
0(\modsymb t )}}h_\square^2 - \sum\limits_{\substack{\square\in\lambda\\
c_\square\equiv 0(\modsymb t )}}c_\square^2\\
=\ &
 t^2\sum_{i=0}^{t-1}\left(n_i^2-d_i^2n_i-\frac13d_iq_3(\lambda^i)\right)- \sum\limits_{\substack{\square\in\mu\\
c_\square\equiv 0(\modsymb t )}}c_\square^2,
\end{align*}
where $n_i=|\lambda^i|$ for $0\leq i\leq t-1.$ 
\end{thm}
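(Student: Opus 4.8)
The plan is to mirror the inductive proof of Theorem~\ref{th:hookcontentsquare}, inducting on $|\lambda|$. For the base case $\lambda=\mu$, each quotient is empty, so $n_i=0$ and $q_3(\lambda^i)=q_3(\emptyset)=0$; the right-hand side then collapses to $-\sum_{\square\in\mu,\,c_\square\equiv 0(\modsymb t)}c_\square^2$, while the left-hand side has no hook lengths divisible by $t$ (because $\mu$ is a $t$-core), leaving exactly the same quantity. For the inductive step I would fix a $\lambda$ satisfying the identity and pass to $\lambda^+$ obtained by adjoining one box $\square_i$, of content $c:=c_{\square_i}=x_{i,w_i}$, to the $i$-th quotient.

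First I would compute the change in the hook term. Applying the first displayed formula of Lemma~\ref{th:hookdiff} with $r=1$ gives
$$\sum_{\substack{\square\in\lambda^+\\ h_\square\equiv 0(\modsymb t)}}h_\square^2-\sum_{\substack{\square\in\lambda\\ h_\square\equiv 0(\modsymb t)}}h_\square^2 = t^2+t^2\Bigl(\sum_{0\le l\le m_i}(c-x_{i,l})^2-\sum_{1\le l\le m_i}(c-y_{i,l})^2\Bigr).$$
Expanding each square and recognizing the three resulting sums as $q_0(\lambda^i)\,c^2$, $-2c\,q_1(\lambda^i)$ and $q_2(\lambda^i)$, I would invoke Theorem~\ref{th:dqnu1}(1), namely $q_0=1$, $q_1=0$, $q_2=2n_i$, to reduce this increment to $t^2+t^2c^2+2t^2n_i$.

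Next I would track the content term via Lemma~\ref{th:contentadd}: the newly created contents are $ct+b_i-s$ for $0\le s\le t-1$. Writing $b_i=td_i+i$, exactly one of these is divisible by $t$, the one with $s=i$, whose value is $t(c+d_i)$. Hence the content term grows by $t^2(c+d_i)^2=t^2c^2+2t^2cd_i+t^2d_i^2$, so the increment of the entire left-hand side is $t^2(1+2n_i-2cd_i-d_i^2)$. On the right-hand side only the $j=i$ summand changes, through $n_i\mapsto n_i+1$ and, by the relation $q_3(\lambda^i\cup\{\square_i\})-q_3(\lambda^i)=6c$ furnished by Theorem~\ref{th:dqnu1}(2), through $q_3(\lambda^i)\mapsto q_3(\lambda^i)+6c$; a direct computation gives its increment as $t^2\bigl((2n_i+1)-d_i^2-2cd_i\bigr)$, which matches the left-hand side and closes the induction.

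The algebra is routine once the structural inputs are assembled; the single step that demands care is the content bookkeeping, i.e.\ isolating which of the $t$ freshly created contents is divisible by $t$ and extracting its exact value $t(c+d_i)$ from $b_i=td_i+i$. This is precisely where the $d_i$ (and not merely the $b_i$) enter, and it is what produces the $d_i^2 n_i$ and $\frac13 d_i q_3(\lambda^i)$ terms on the right-hand side.
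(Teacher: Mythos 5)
Your proof is correct and follows essentially the same route as the paper's: induction on $|\lambda|$, with the hook-length increment $t^2+t^2c_{\square_i}^2+2t^2n_i$ obtained from Case (i) of Lemma~\ref{th:hookdiff} with $r=1$ together with $q_0,q_1,q_2$ from Theorem~\ref{th:dqnu1}(1), the content increment $t^2(c_{\square_i}+d_i)^2$ from Lemma~\ref{th:contentadd}, and the relation $q_3(\lambda^i\cup\{\square_i\})-q_3(\lambda^i)=6c_{\square_i}$ closing the induction. The only difference is cosmetic: you cite Theorem~\ref{th:dqnu1}(2) for the $q_3$ increment, which is indeed the relevant part, whereas the paper's proof attributes it to part (3).
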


\begin{proof}
We will prove this result by induction on the size of $\lambda.$ Notice that $\mu(0)=\{h\in \mathcal{H}(\mu) \mid h\equiv 0 \pmod t\}=\emptyset$ since $\mu$ is a $t$-core partition.
When $\lambda=\mu$, we have $\lambda^i=\emptyset$, $n_i=0$ and thus
the identity holds. Now suppose that the identity holds for
$$
\lambda=(\mu;
\lambda^0,\lambda^1,\ldots,\lambda^{i-1},\lambda^i,\lambda^{i+1},\ldots,\lambda^{t-1}).
$$
We just need to prove it  for
$$
\lambda^+=(\mu;
\lambda^0,\lambda^1,\ldots,\lambda^{i-1},\lambda^{i}\cup\{\square_i\},\lambda^{i+1},\ldots,\lambda^{t-1}).
$$
Let $x_{i,w_i}=c_{\square_i}.$ By Lemma \ref{th:hookdiff}
we have
\begin{align*}
&\sum\limits_{\substack{\square\in\lambda^+\\ h_\square\equiv 0
(\modsymb t )}}h_\square^2 - \sum\limits_{\substack{\square\in\lambda\\
h_\square\equiv 0 (\modsymb t )}}h_\square^2\\
=\ &t^2+
\sum\limits_{\substack{0\leq l \leq
    m_{i}}} (tx_{i,w_i}-tx_{i,l})^{2}-
    \sum\limits_{\substack{1\leq l \leq
    m_{i}}} (tx_{i,w_i}-ty_{i,l})^{2}\\
=\ &t^2+ (tx_{i,w_i})^{2}-2t^2x_{i,w_i}\Biggl(\sum\limits_{\substack{0\leq l \leq
    m_{i}}}x_{i,l}-\sum\limits_{\substack{1\leq l \leq
    m_{i}}}y_{i,l}\Biggr)\\ 
&\quad +t^2 \Biggl(\sum\limits_{\substack{0\leq l \leq
    m_{i}}}x_{i,l}^2-\sum\limits_{\substack{1\leq l \leq
    m_{i}}}y_{i,l}^2\Biggr) \\ 
=\ &t^2+t^2
x_{i,w_i}^{2}+2t^2n_{i}.
\end{align*}
The last equality is due to Theorem \ref{th:dqnu1}(1). Also, by Lemma
\ref{th:contentadd} we have
\begin{align*}
\sum\limits_{\substack{\square\in\lambda^+\\ c_\square\equiv 0
(\modsymb t )}}c_\square^2 - \sum\limits_{\substack{\square\in\lambda\\
c_\square\equiv 0 (\modsymb t )}}c_\square^2 =t^2(x_{i,w_i}+d_i)^{2}.
\end{align*}
By Theorem \ref{th:dqnu1}(3) we have
$$q_3(\lambda^{i}\cup\{\square_i\})-q_3(\lambda^i)=6x_{i,w_i}.$$
Finally, putting these identities together, we prove the claim.
\end{proof}

It is clear that Theorem \ref{th:hookcontentsquare:mu0} is a consequence of Theorems
\ref{th:hookcontentsquare} and 
\ref{th:hookcontentsquare1}
by letting~$\mu=\emptyset$.

\medskip

Theorem \ref{th:hooksquare1} is on $t$-Plancherel averages of square sums of congruent hook lengths.

\begin{proof}[Proof of Theorem \ref{th:hooksquare1}.]
For two partitions
$$
\lambda=(\mu;
\lambda^0,\lambda^1,\ldots,\lambda^{i-1},\lambda^i,\lambda^{i+1},\ldots,\lambda^{t-1})
$$
and
$$
\lambda^+=(\mu;
\lambda^0,\lambda^1,\ldots,\lambda^{i-1},\lambda^{i}\cup\{\square_i\},\lambda^{i+1},\ldots,\lambda^{t-1}),
$$
by Case (ii) of Lemma \ref{th:hookdiff} we have
\begin{align*}
&\sum\limits_{\substack{\square\in\lambda^+\\ h_\square\equiv
k(\modsymb t )}}h_\square^2+\sum\limits_{\substack{\square\in\lambda^+\\
h_\square\equiv t-k(\modsymb t )}}h_\square^2 -
\sum\limits_{\substack{\square\in\lambda\\ h_\square\equiv
k(\modsymb t )}}h_\square^2-\sum\limits_{\substack{\square\in\lambda\\
h_\square\equiv t-k(\modsymb t )}}h_\square^2\\ 
=\ &
\sum\limits_{\substack{0\leq l \leq
    m_{i'}}} (tc_{\square_i}+b_i-tx_{i',l}-b_{i'})^{2}-
    \sum\limits_{\substack{1\leq l \leq
    m_{i'}}} (tc_{\square_i}+b_i-ty_{i',l}-b_{i'})^{2}
    \\ 
&\quad + \sum\limits_{\substack{0\leq
l \leq
    m_{i''}}} (tc_{\square_i}+b_i-tx_{i'',l}-b_{i''})^{2}-
    \sum\limits_{\substack{1\leq l \leq
    m_{i''}}} (tc_{\square_i}+b_i-ty_{i'',l}-b_{i''})^{2}
\end{align*}
where  $i'\equiv i+k (\modsymb t)$ and $i''\equiv i-k (\modsymb t).$

Let
$$
P(n):=\sum\limits_{\substack{\lambda_\tcore=\mu\\
|\lambda/\mu|=nt }}F_{\lambda/\mu}G_\lambda\Biggl(\sum\limits_{\substack{\square\in\lambda\\
h_\square\equiv
k(\modsymb t )}}h_\square^2+\sum\limits_{\substack{\square\in\lambda\\
h_\square\equiv t-k(\modsymb t )}}h_\square^2\Biggr).
$$
Then, by Lemma \ref{th:telescope} and Theorems \ref{th:dqnu1}, \ref{th:h012}  we have

\begin{align*}
&P(n+1)-P(n) \\
=\ &\frac1t\sum\limits_{\substack{\lambda_\tcore=\mu\\
|\lambda/\mu|=nt }}F_{\lambda/\mu}G_\lambda\sum_{i=0}^{t-1}\sum_{\lambda^{i+}=\lambda^{i}\cup\{\square_i\}}\frac{H_{\lambda^i}}{H_{\lambda^{i+}}}\\
&\quad \Bigl(\sum\limits_{\substack{0\leq l \leq
    m_{i'}}} (tc_{\square_i}+b_i-tx_{i',l}-b_{i'})^{2}-
    \sum\limits_{\substack{1\leq l \leq
    m_{i'}}} (tc_{\square_i}+b_i-ty_{i',l}-b_{i'})^{2}
    \\ 
&\quad + \sum\limits_{\substack{0\leq
l \leq
    m_{i''}}} (tc_{\square_i}+b_i-tx_{i'',l}-b_{i''})^{2}-
    \sum\limits_{\substack{1\leq l \leq
    m_{i''}}} (tc_{\square_i}+b_i-ty_{i'',l}-b_{i''})^{2}\Bigr)\\
 =\   & \frac1t\sum\limits_{\substack{\lambda_\tcore=\mu\\
|\lambda/\mu|=nt }}F_{\lambda/\mu}G_\lambda\sum_{i=0}^{t-1}\sum_{\lambda^{i+}=\lambda^{i}\cup\{\square_i\}}\frac{H_{\lambda^i}}{H_{\lambda^{i+}}}\\
& \quad\Bigl(
2t^2c_{\square_i}^2+2t^2|\lambda^{i'}|+2t^2|\lambda^{i''}|+
(b_i-b_{i'})^{2}+(b_i-b_{i''})^{2}\Bigr)\\
=\    &2tn+2tn+2tn + \frac1t\sum_{i=0}^{t-1}
\Bigl((b_i-b_{i'})^{2}+(b_i-b_{i''})^{2}\Bigr).
\end{align*}

On the other hand,
\begin{align*}
&\frac1t\sum_{i=0}^{t-1}
\left((b_i-b_{i'})^{2}+(b_i-b_{i''})^{2}\right)\\
=\ &\frac2t\sum_{(i,j)\in
B_k}(b_i-b_{j})^{2}\\
=\ &\frac2t\sum_{(i,j)\in B_k}\left(
t^2(d_i-d_j)^2+(i-j)^2+2t(i-j)(d_i-d_j) \right)\\
=\ &2k(t-k)+4t(|\mu(k)|+|\mu(t-k)|).
\end{align*}
The last equality is due to \eqref{eq:Bk2} and Lemma \ref{th:cibimu}.
Finally,
$$
P(n+1)-P(n)=6nt+2k(t-k)+4t|\mu(k)|+4t|\mu(t-k)|.
$$
Thus, 
\begin{align*}
	P(n)&=6t\binom{n}{2}+\bigl(2k(t-k)+4t|\mu(k)|+4t|\mu(t-k)|\bigr)n+P(0)\\
&= 6t\binom{n}{2}+\bigl(2k(t-k)+4t|\mu(k)|+4t|\mu(t-k)|\bigr)n\\
&\quad +\Biggl(\sum\limits_{\substack{\square\in\mu\\
h_\square\equiv
k(\modsymb t )}}h_\square^2 +\sum\limits_{\substack{\square\in\mu\\
h_\square\equiv t-k(\modsymb t )}}h_\square^2\Biggr). \qedhere
\end{align*}

\end{proof}

Theorem \ref{th:hooksquare2} is analogous to Theorem \ref{th:hooksquare1} when looking at $t$-Plancherel averages of square sums of hook lengths, but 
only the lengths divisible by $t$ are taken into account.

\begin{proof}[Proof of Theorem \ref{th:hooksquare2}.]
For two partitions
$$
\lambda=(\mu;
\lambda^0,\lambda^1,\ldots,\lambda^{i-1},\lambda^i,\lambda^{i+1},\ldots,\lambda^{t-1})
$$
and
$$
\lambda^+=(\mu;
\lambda^0,\lambda^1,\ldots,\lambda^{i-1},\lambda^{i}\cup\{\square_i\},\lambda^{i+1},\ldots,\lambda^{t-1}),
$$
by Case (i) of Lemma \ref{th:hookdiff} we have
\begin{align*}
	\sum\limits_{\substack{\square\in\lambda^+\\ h_\square\equiv 0 (\modsymb t)}}h_\square^2 - \sum\limits_{\substack{\square\in\lambda\\
	h_\square\equiv 0 (\modsymb t)}}h_\square^2=t^2+t^2 c_{\square_i}^{2}+2t^2n_{i}.
\end{align*}
Let
$$
P(n):=\sum\limits_{\substack{\lambda_\tcore=\mu\\
|\lambda/\mu|=nt }}F_{\lambda/\mu}G_\lambda\sum\limits_{\substack{\square\in\lambda\\
h_\square\equiv 0(\modsymb t)}}h_\square^2.
$$
Then,
\begin{align*}
P(n+1)-P(n) &=\frac1t\sum\limits_{\substack{\lambda_\tcore=\mu\\
|\lambda/\mu|=nt }}F_{\lambda/\mu}G_\lambda\sum_{i=0}^{t-1}\sum_{\lambda^{i+}=\lambda^{i}\cup\{\square_i\}}\frac{H_\lambda}{H_{\lambda^{i+}}}(t^2+t^2
c_{\square_i}^{2}+2t^2n_{i})\\
 &=t^2+3t\sum_{i=0}^{t-1}n_i\\
	 &=t^2+3nt.
\end{align*}
Thus, we have 
\begin{equation*}
P(n)=nt^2+3t\binom{n}{2}+P(0)=nt^2+3t\binom{n}{2}. \qedhere
\end{equation*}
\end{proof}

Finally, the next two theorems have similar concerns, looking at $t$-Plancherel averages, but this time for squares of contents. They are differentiated by the restriction in Theorem \ref{th:contentsquare1} or looking only at contents in one congruence class.

\begin{thm} \label{th:contentsquare1}Suppose that $\mu$ is a given
 $t$-core partition and $0\leq k\leq t-1$. For each $0\leq i\leq t-1$ let $i'$ satisfy $0\leq i'\leq t-1$ and $i-i'\equiv k(\modsymb t)$.
  Then \begin{align*} \sum\limits_{\substack{\lambda_\tcore=\mu\\
|\lambda/\mu|=nt }}F_{\lambda/\mu}G_\lambda
\sum\limits_{\substack{\square\in\lambda\\
c_\square\equiv k(\modsymb t)}}c_\square^2= t\binom{n}{2}+
    \frac1t\sum_{i=0}^{t-1}(b_i-i')^{2}n
    +\sum\limits_{\substack{\square\in\mu\\
c_\square\equiv k(\modsymb t )}}c_\square^2.\end{align*} 
\end{thm}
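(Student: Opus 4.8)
The plan is to follow exactly the telescoping scheme used for Theorems~\ref{th:hooksquare1} and~\ref{th:hooksquare2}. Writing $g(\lambda)=\sum_{\square\in\lambda,\,c_\square\equiv k\,(\modsymb t)}c_\square^2$ and letting $P(n)$ denote the left-hand side, Lemma~\ref{th:telescope} reduces the claim to computing the first difference $P(n+1)-P(n)=P_{D_t(G_\lambda g(\lambda))}(n)$. Since $D_t(G_\lambda)=0$ by Lemma~\ref{th:Glambda}, the contribution proportional to $g(\lambda)$ drops out, leaving
$$
D_t\bigl(G_\lambda g(\lambda)\bigr)=\frac1t G_\lambda\sum_{i=0}^{t-1}\sum_{\lambda^{i+}=\lambda^i\cup\{\square_i\}}\frac{H_{\lambda^i}}{H_{\lambda^{i+}}}\bigl(g(\lambda^+)-g(\lambda)\bigr),
$$
where I have used $G_{\lambda^+}/G_\lambda=H_{\lambda^i}/(tH_{\lambda^{i+}})$ from the proof of Lemma~\ref{th:Glambda}.

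Next I would pin down $g(\lambda^+)-g(\lambda)$ via Lemma~\ref{th:contentadd}. Adding $\square_i$ to the quotient $\lambda^i$ creates in $\lambda$ the $t$ consecutive contents $tc_{\square_i}+b_i-j$ for $0\leq j\leq t-1$, so exactly one of them is congruent to $k$ modulo $t$. Since $b_i\equiv i$ and $i'\equiv i-k\,(\modsymb t)$, that content occurs at $j=i'$ and equals $tc_{\square_i}+b_i-i'$. Hence $g(\lambda^+)-g(\lambda)=(tc_{\square_i}+b_i-i')^2$, which I would expand as $t^2c_{\square_i}^2+2t(b_i-i')c_{\square_i}+(b_i-i')^2$.

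Summing over the corner additions of $\lambda^i$ (so that $c_{\square_i}$ runs over the inner-corner contents $x_{i,l}$), I would apply Theorem~\ref{th:h012}, which gives $\sum_l \tfrac{H_{\lambda^i}}{H_{\lambda^{i+}}}x_{i,l}=0$ and $\sum_l \tfrac{H_{\lambda^i}}{H_{\lambda^{i+}}}x_{i,l}^2=n_i$, together with the normalisation $\sum_l \tfrac{H_{\lambda^i}}{H_{\lambda^{i+}}}=1$ (the $k=0$ instance of Theorem~\ref{th:dqnu1}(4), since $q_0=1$). The linear term vanishes and the inner sum collapses to $t^2n_i+(b_i-i')^2$. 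Summing over $i$ with $\sum_i n_i=n$ leaves a quantity independent of $\lambda$, so after factoring out $\tfrac1t$ and invoking $\sum_{\lambda_\tcore=\mu,\,|\lambda/\mu|=nt}F_{\lambda/\mu}G_\lambda=1$ (the Example following Lemma~\ref{th:telescope}) I obtain
$$
P(n+1)-P(n)=tn+\frac1t\sum_{i=0}^{t-1}(b_i-i')^2.
$$

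It then remains to solve this first-order recurrence. Telescoping from the initial value $P(0)=\sum_{\square\in\mu,\,c_\square\equiv k\,(\modsymb t)}c_\square^2$ (the case $n=0$, where $\lambda=\mu$ and $F_{\mu/\mu}=G_\mu=1$) yields $t\binom n2+\tfrac nt\sum_i(b_i-i')^2+\sum_{\square\in\mu,\,c_\square\equiv k\,(\modsymb t)}c_\square^2$, which is the asserted formula. The only genuinely delicate point is the residue bookkeeping in the second paragraph: verifying that precisely one of the $t$ newly created contents lies in the class $k\,(\modsymb t)$ and identifying it as $tc_{\square_i}+b_i-i'$. Everything else is the by-now-routine combination of Lemma~\ref{th:Glambda}, Theorem~\ref{th:h012}, and the telescoping identity.
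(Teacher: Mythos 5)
Your proposal is correct and is essentially identical to the paper's own proof: the same telescoping via Lemma~\ref{th:telescope}, the same identification of the single new content $tc_{\square_i}+b_i-i'$ from Lemma~\ref{th:contentadd}, the same moment computation via Theorem~\ref{th:h012}, and the same resolution of the resulting first-order recurrence from $P(0)$. The only quibble is the citation for the normalisation $\sum_l H_{\lambda^i}/H_{\lambda^{i+}}=1$, which is exactly \eqref{eq:DH} (Theorem~\ref{th:dqnu1}(4) with $k=0$ only gives that this sum is constant, not that the constant equals $1$); this is immaterial to the argument.
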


\begin{proof}
For two partitions
$$
\lambda=(\mu;
\lambda^0,\lambda^1,\ldots,\lambda^{i-1},\lambda^i,\lambda^{i+1},\ldots,\lambda^{t-1})
$$
and
$$
\lambda^+=(\mu;
\lambda^0,\lambda^1,\ldots,\lambda^{i-1},\lambda^{i}\cup\{\square_i\},\lambda^{i+1},\ldots,\lambda^{t-1}),
$$
by Lemma \ref{th:contentadd} we have
\begin{align*}
\sum\limits_{\substack{\square\in\lambda^+\\ c_\square\equiv k
(\modsymb t )}}c_\square^2 - \sum\limits_{\substack{\square\in\lambda\\
c_\square\equiv k (\modsymb t )}}c_\square^2 =(tc_{\square_i}+b_i-i')^{2}.
\end{align*}
Let
$$
P(n):=\sum\limits_{\substack{\lambda_\tcore=\mu\\
|\lambda/\mu|=nt
}}F_{\lambda/\mu}G_\lambda\sum\limits_{\substack{\square\in\lambda\\
c_\square\equiv k (\modsymb t )}}c_\square^2.
$$
Then, we have
\begin{align*}
P(n+1)-P(n) \
&=\frac1t\sum\limits_{\substack{\lambda_\tcore=\mu\\
|\lambda/\mu|=nt }}F_{\lambda/\mu}G_\lambda\sum_{i=0}^{t-1}\sum_{\lambda^{i+}=\lambda^{i}\cup\{\square_i\}}\frac{H_\lambda}{H_{\lambda^{i+}}}(tc_{\square_i}+b_i-i')^{2}\\
    &=tn+
    \frac1t\sum_{i=0}^{t-1}(b_i-i')^{2}.
\end{align*}
Finally, we obtain
\begin{align*}
	P(n)&=t\binom{n}{2}+
    \frac1t\sum_{i=0}^{t-1}(b_i-i')^{2}n+P(0)\\
		&=t\binom{n}{2}+
    \frac1t\sum_{i=0}^{t-1}(b_i-i')^{2}n
    +\sum\limits_{\substack{\square\in\mu\\
c_\square\equiv k(\modsymb t )}}c_\square^2.\qedhere
\end{align*}

\end{proof}

\begin{thm} \label{th:contentsquare2} Suppose that $\mu$ is a given
 $t$-core partition.
  Then we have \begin{align*} \sum\limits_{\substack{\lambda_\tcore=\mu\\
|\lambda/\mu|=nt }}F_{\lambda/\mu}G_\lambda
\sum\limits_{\substack{\square\in\lambda}}c_\square^2=
t^2\binom{n}{2}+\frac{(t^3-t)n}{6}+2tn|\mu|
+\sum\limits_{\substack{\square\in\mu}}c_\square^2.\end{align*} 
\end{thm}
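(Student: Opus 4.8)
The plan is to prove Theorem~\ref{th:contentsquare2} by summing the congruence-class result of Theorem~\ref{th:contentsquare1} over all residues $0 \le k \le t-1$. The left-hand side sums to the full content square-sum, so the main task is to evaluate the resulting sum of the three terms on the right. Summing over $k$ exchanges the order of summation, and the key observation is that as $k$ ranges over $\{0,1,\ldots,t-1\}$, for each fixed $i$ the associated index $i'$ (defined by $i - i' \equiv k \pmod t$) also ranges over a complete residue system $\{0,1,\ldots,t-1\}$. This turns the double sum over $(i,k)$ into a double sum over $(i,i')$ ranging freely.

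First I would write
\begin{align*}
\sum\limits_{\substack{\lambda_\tcore=\mu\\ |\lambda/\mu|=nt }}F_{\lambda/\mu}G_\lambda \sum\limits_{\substack{\square\in\lambda}}c_\square^2 = \sum_{k=0}^{t-1}\Biggl( t\binom{n}{2}+ \frac1t\sum_{i=0}^{t-1}(b_i-i')^{2}n +\sum\limits_{\substack{\square\in\mu\\ c_\square\equiv k(\modsymb t )}}c_\square^2 \Biggr).
\end{align*}
The first term gives $t^2\binom{n}{2}$, and the last term sums to $\sum_{\square\in\mu}c_\square^2$ since the residue classes partition the contents of $\mu$. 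The middle term becomes $\frac{n}{t}\sum_{i=0}^{t-1}\sum_{i'=0}^{t-1}(b_i-i')^2$, using the reindexing observation above.

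Next I would evaluate $S := \sum_{i=0}^{t-1}\sum_{i'=0}^{t-1}(b_i-i')^2$. Expanding the square gives
\begin{align*}
S = t\sum_{i=0}^{t-1}b_i^2 - 2\Bigl(\sum_{i=0}^{t-1}b_i\Bigr)\Bigl(\sum_{i'=0}^{t-1}i'\Bigr) + t\sum_{i'=0}^{t-1}(i')^2.
\end{align*}
Here I would substitute $b_i = t d_i + i$ and use the structural identities from Section~\ref{sec:bidi}: Lemma~\ref{th:ci1} gives $\sum_i d_i = 0$, while the expression \eqref{eq:card:mu} for $|\mu|$ gives $\frac{t}{2}\sum_i d_i^2 + \sum_i i d_i = |\mu|$. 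Computing $\sum_i b_i = t\sum_i d_i + \sum_i i = \binom{t}{2}$ and $\sum_i b_i^2 = t^2\sum_i d_i^2 + 2t\sum_i i d_i + \sum_i i^2$, together with $\sum_{i'}i' = \binom{t}{2}$ and $\sum_{i'}(i')^2 = \frac{(t-1)t(2t-1)}{6}$, everything collapses. The $|\mu|$-terms combine to produce $2t^2 |\mu|$ inside $S$, and the purely arithmetic terms yield the $\frac{t^3-t}{6}$ coefficient after dividing by $t$.

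\emph{The main obstacle} will be the bookkeeping in the evaluation of $S$: one must carefully pair $t^2\sum_i d_i^2 + 2t\sum_i i d_i$ against the combination $\frac{t}{2}\sum_i d_i^2 + \sum_i i d_i = |\mu|$ to recognize $t\bigl(t\sum_i d_i^2 + 2\sum_i i d_i\bigr) = 2t|\mu|$, so that $t\sum_i b_i^2$ contributes $2t|\mu|$ plus the pure term $t\sum_i i^2$. After dividing by $t$, the $|\mu|$-contribution is $2tn|\mu|$ as claimed, and the remaining arithmetic sums simplify—using $t\sum_{i'}(i')^2 - 2\binom{t}{2}^2 + t\sum_i i^2 = \frac{t^2(t^2-1)}{6}$—to give $\frac{(t^3-t)n}{6}$. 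No deep idea is required beyond Theorem~\ref{th:contentsquare1}; the proof is a clean specialization followed by the elementary evaluation of $S$ via the three identities from Section~\ref{sec:bidi}.
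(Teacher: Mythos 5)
Your proposal is correct and follows essentially the same route as the paper: the paper likewise sums Theorem~\ref{th:contentsquare1} over all residues $k$, reindexes the middle term into $\frac{n}{t}\sum_{i=0}^{t-1}\sum_{j=0}^{t-1}(b_i-j)^2$, and evaluates it via Lemma~\ref{th:ci1}, \eqref{eq:Bk3}, \eqref{eq:Bk4} and \eqref{eq:card:mu}. Your explicit expansion of $S$ with $b_i=td_i+i$ is simply the worked-out version of the computation the paper leaves to those cited identities, and your arithmetic (yielding $2t^2|\mu|+\frac{t^2(t^2-1)}{6}$ for $S$) checks out.
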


\begin{proof}
 By Theorem \ref{th:contentsquare1} we have
\begin{align*}\ & \sum\limits_{\substack{\lambda_\tcore=\mu\\
|\lambda/\mu|=nt }}F_{\lambda/\mu}G_\lambda
\sum\limits_{\substack{\square\in\lambda}}c_\square^2
\\&=
t^2\binom{n}{2}+
\frac1t\sum_{i=0}^{t-1}\sum_{j=0}^{t-1}(b_i-j)^{2}n
+\sum\limits_{\substack{\square\in\mu}}c_\square^2
\\&=
t^2\binom{n}{2}+\frac{(t^3-t)n}{6}+2tn|\mu|
+\sum\limits_{\substack{\square\in\mu}}c_\square^2.\end{align*} The
last equality is due to \eqref{eq:Bk3},  \eqref{eq:Bk4} and \eqref{eq:card:mu}.
\end{proof}

\section{Acknowledgments}
The authors thank the referees for carefully reading our paper and for giving helpful comments.
P.-O. D.~and H. X.~acknowledge support from grant PP00P2\_138906 of the Swiss National Science Foundation. H. X.~also acknowledges support of a Forschungskredit FK-14-093 from the University of Z\"urich.  



\end{document}